\documentclass{amsart}
\numberwithin{equation}{section}
\usepackage{amssymb}
\usepackage{amsmath}
\usepackage{amsthm}
\usepackage{amscd}
\usepackage{amsfonts}
\usepackage{ascmac}
\usepackage[usenames]{color}
\usepackage{graphics}
\usepackage{DotArrow}

\theoremstyle{plain}
\newtheorem{theorem}{Theorem}[section]
\newtheorem{proposition}[theorem]{Proposition}
\newtheorem{corollary}[theorem]{Corollary}
\newtheorem{lemma}[theorem]{Lemma}

\newtheorem{example}[theorem]{Example}
\newtheorem{definition}[theorem]{Definition}

\newtheorem{fact}[theorem]{Fact}
\newtheorem{remark}[theorem]{Remark}

\newcommand{\bfC}{{\mathbf C}}

\newcommand{\bfP}{{\mathbf P}}

\newcommand{\barj}{{\overline j}}

\newcommand{\bark}{{\overline k}}
\newcommand{\barl}{{\overline \ell}}

\newcommand{\barz}{{\overline z}}

\newcommand{\barw}{{\overline w}}

\newcommand{\barbet}{{\overline \beta}}

\newcommand{\barpsi}{{\overline \psi}}
\newcommand{\barvarphi}{{\overline{\varphi}}}
\newcommand{\barpartial}{{\overline \partial}}

\newcommand{\mapright}[1]{\smash{\mathop{   \hbox to 0.7cm{\rightarrowfill}}
  \limits^{#1}}}

\newcommand{\Ker}{{\rm Ker}}

\newcommand{\Ric}{\operatorname{Ric}}

\newcommand{\grad}{\mathrm{grad}}

\newcommand{\Fut}{\mathrm{Fut}}
\newcommand{\Vol}{\mathrm{Vol}}

\newcommand{\pa}{\partial}

\newcommand{\gm}{\gamma}
\newcommand{\ka}{K\"ahler }
\newcommand{\ke}{K\"ahler-Einstein }

\newcommand{\lb}{\left (}
\newcommand{\rb}{\right )}

\newcommand{\C}{{\mathbb C}}

\newcommand{\Aut}{\operatorname{Aut}}
\newcommand{\Isom}{\operatorname{Isom}}
\newcommand{\TildeT}{\widetilde{T}}

\title{On coupled K\"ahler-Einstein metrics and weighted solitons on Fano manifolds}
\author{Akito Futaki\\
\\
Celebrating Prof. Shing-Tung Yau's 75th birthday}
\address{Yau Mathematical Sciences Center, Tsinghua University, Haidian district, Beijing 100084, China}
\email{futaki@tsinghua.edu.cn}

\date{April 12, 2026}

\begin{document}

\begin{abstract}
We consider coupled K\"ahler-Einstein metrics and weighted solitons on Fano manifolds.
These are natural generalizations of K\"ahler-Einstein metrics.
As in the case of K\"ahler-Einstein metrics, the existence is known to be equivalent to algebraic
conditions which generalize the K-polystability.
In this survey, we outline recent developments for these two cases.
\end{abstract}

\maketitle


\section{Introduction}

Let $X$ be a compact K\"ahler manifold, that is,
for a Hermitian metric $g = (g_{i\barj})$ of $T^\prime X$, the K\"ahler form
$$ \omega_g = \sqrt{-1} \sum_{i,j =1}^n\ g_{i\barj}\ dz_i \wedge d\barz_j$$
satisfies the K\"ahler condition $d\omega_g = 0$.
The Ricci curvature of $g$ or $\omega$ is given by
\begin{equation*}
\mathrm{Ric}(g)_{i\barj} = - \frac{\partial^2}{\partial z_i\partial \barz_j} \log \det (g_{k\barl}).
\end{equation*}
The Ricci form of $g$ is given by
\begin{equation*}
\Ric(\omega) := \sqrt{-1} \sum_{i,j=1}^n\ \mathrm{Ric}(g)_{i\barj}\ dz_i \wedge d\barz_j 
= - \sqrt{-1} \partial \barpartial  \log \det (g_{k\barl}).
\end{equation*}
This is a real closed $(1,1)$-form. By the Chern-Weil theory we have $ c_1(X) = [\Ric(\omega)/2\pi]$. 
Calabi posed the problem to find a K\"ahler-Einstein metric 
$$ \mathrm{Ric}(g)_{i\barj} = \lambda\ g_{i\barj}$$
for $\lambda = -1, 0 \text{ or}\ 1$.
This amounts to solving the complex Monge-Amp\`ere equation with respect to the unknown function $\varphi$,
$$ \det (g_{i\barj} + \frac{\partial^2 \varphi}{\partial z^i\partial\overline{z^j}})/\det(g_{i\barj}) = e^{-\lambda \varphi + F}$$
for a given K\"ahler metric $g$ and a smooth function $F$.
By the theorem of Yau \cite{yau78}, if $c_1(X) < 0$ or $c_1(X) = 0$ then there exists a
K\"ahler-Einstein metric with $\lambda = -1$ or $\lambda = 0$. The case of $\lambda = 0$ gave a solution to
the Calabi conjecture.
The case of $c_1(X) < 0$ is also due to Aubin \cite{aubin76} independently.

For $\lambda = 1$, that is, when $c_1(X) > 0$ (i.e. $X$ a Fano manifold),
 Chen-Donaldson-Sun \cite{CDS3} and Tian \cite{Tian12} proved that the existence of K\"ahler-Einstein metrics
 on Fano manifolds is equivalent to a notion called K-polystability.  
 K-polystability uses Donaldson-Futaki invariant as the (infinite dimensional) GIT weight.
This assertion has been known as Yau-Tian-Donaldson conjecture.  

A $k$-tuple of K\"ahler metrics $\omega_1,\ \cdots, \omega_k$ on a compact K\"ahler manifold $X$
is called coupled K\"ahler-Einstein metrics if it satisfies
\begin{equation}\label{coupled KE}
\Ric(\omega_1) = \ \cdots = \Ric(\omega_k) = \lambda \sum_{i=1}^k \omega_i
\end{equation}
for $\lambda = -1,\ 0$ or $1$. The notion of such metrics were introduced  by Hultgren and Witt Nystr\"om \cite{HultgrenWittNystrom18}. 
If $\lambda = 0$ this is just a $k$-tuple of Ricci-flat metrics, and the existence for compact K\"ahler manifolds
follows from Yau's solution of the Calabi conjecture with $c_1(X) = 0$.

For $\lambda = -1$ or $\lambda = 1$ the existence problem is an extension for the problem
for negative or positive K\"ahler-Einstein metrics, and an obvious  necessary condition is $c_1(X) < 0$ or $c_1(X) > 0$. 
Hultgren and Witt Nystr\"om proved the existence of the solution of \eqref{coupled KE} for $\lambda = -1$ under the condition
$c_1(X) < 0$ extending Yau and Aubin, 
and many interesting results for $\lambda = 1$ under the condition $c_1(X) > 0$
including attempts to extend Chen-Donaldson-Sun and Tian. 

Further, Hultgren \cite{Hultgren17} proved the existence of coupled K\"ahler-Ricci solitons on toric Fano manifolds (Theorem \ref{toric existence} below), 
and showed
that there is a toric Fano manifold such that the classical Futaki invariant does not vanish but
for certain choice of decomposition of the anticanonical class the barycenter lies at the origin and thus, for such a choice, there
exists couple K\"ahler-Einstein metrics though there is no K\"ahler-Einstein metric, see Example \ref{example1} below.
Hence, coupled K\"ahler-Einstein metrics can be more flexible canonical K\"ahler metrics on Fano manifolds.

More recently, Fujita-Hashimoto \cite{fujitahashimoto24} gave an equivariantly uniform stability characterization of the existence for coupled K\"ahler-Einstein  metrics
with $\lambda = 1$, $c_1(X) > 0$. Naturally, the work of Fujita-Hashimoto extends earlier results for the uncoupled (i.e. $k=1$) K\"ahler-Einstein metrics
on Fano manifolds. Here is a brief summary of some of the well-known results in the uncoupled case.

In the case $k=1$ and $\lambda = 1$, i.e. in the case of positive K\"ahler-Einstein metrics, 
after the proof by Chen-Donaldson-Sun and Tian,  other proofs were also given by Datar-Sz\'ekelyhidi \cite{DatarSzeke16} and Chen-Sun-Wang \cite{CSW}.
While all of these proofs used the Gromov-Hausdorff convergence, a variational proof without using Gromov-Hausdorff convergence was further given by
Berman-Boucksom-Jonsson \cite{BBJ} under the condition of
uniform K-stability. In Berman-Boucksom-Jonsson's case, the automorphism group is discrete.

For non-discrete automorphism group case, 
Li \cite{LiChi22} showed the existence under the condition of $G$-uniform stability. 
The work of Liu-Xu-Zhuang \cite{LXZ22} shows that when $G$ contains the maximal
torus $G$-uniform stability is equivalent to K-polystability. 

The recently work of Fujita-Hashimoto extended the work of Li to the case of 
positive coupled K\"ahler-Einstein metrics. The first half of this survey will try to give an account of the paper of Fujita-Hashimoto and my joint works
\cite{FutakiZhang18}, \cite{FutakiZhang19} 
with Zhang. The main result of Fujita-Hashimoto which is an extension of Theorem 1.3 in Li \cite{LiChi22} is as follows.
\begin{theorem}[Fujita-Hashimoto \cite{fujitahashimoto24}]\label{fujita-hashimoto} Let $X$ be a Fano manifold, and $\mathrm{Aut}(X)$ be its
automorphism group which we assume is reductive. Let $\mathbb T$ be the maximal torus of $\mathrm{Aut}(X)$, and 
$\mathbb T_r \subset \mathbb T$ be the maximal compact torus.
Then the following are equivalent.
\begin{enumerate}
\item[(1)]\ \ $(X, \{L_i\}_{i=1}^k)$ is $\mathbb T$-reduced uniformly coupled Ding stable.
\item[(2)]\ \ $(X, \{L_i\}_{i=1}^k)$ admits a $\mathbb T_r$-invariant coupled K\"ahler-Einstein metrics.
\item[(3)]\ \ $(X, \{L_i\}_{i=1}^k)$ has vanishing coupled $\mathbb T$-Futaki character and $$\delta_{\mathbb T}^{\mathrm{red}}(X, \{L_i\}_{i=1}^k) > 1.$$
\end{enumerate}
\end{theorem}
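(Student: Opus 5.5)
The plan is to follow Li's proof of Theorem 1.3 in \cite{LiChi22}, adapting each step to the coupled setting. The coupled Ding functional on $k$-tuples of potentials $(\varphi_1,\dots,\varphi_k)$, with $\varphi_i \in \mathrm{PSH}(X,\omega_i)$ and $\omega_i \in 2\pi c_1(L_i)$, is
$$D(\varphi_1,\dots,\varphi_k) = -\sum_{i=1}^k E_i(\varphi_i) - \log \int_X e^{-\sum_i \varphi_i}\, d\mu_0 .$$
Its critical points are exactly the coupled K\"ahler-Einstein metrics. The central analytic statement to prove is:

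\begin{quote}
(2) holds if and only if $D$ is $\mathbb T$-reduced coercive, i.e. $D \ge \varepsilon\, \inf_{\sigma\in\mathbb T} J(\sigma\cdot\varphi) - C$ on $\mathbb T_r$-invariant potentials.
\end{quote}

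Here $\mathbb T$ acts diagonally and $J=\sum_i J_i$.

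For the direction from existence to coercivity, I will use convexity of $D$ along $k$-tuples of psh geodesics. This follows from Berndtsson's positivity applied to $e^{-\sum\varphi_i}$, as in Hultgren and Witt Nystr\"om \cite{HultgrenWittNystrom18}. I will also need a Bando--Mabuchi type uniqueness modulo $\mathrm{Aut}(X)_0$. Then the Darvas--Rubinstein principle gives reduced coercivity. The reductivity assumption is used here: it lets one reduce from the centralizer to $\mathbb T$.

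For the converse, I will run the variational method in the spirit of Berman-Boucksom-Nystr\"om:
\begin{itemize}
\item minimize $D$ over the $\mathbb T_r$-invariant finite energy space $\prod_i \mathcal E^1(X,\omega_i)$;
\item get compactness of sublevel sets modulo $\mathbb T$ from coercivity;
\item show the minimizer is a weak solution via a projection argument;
\item obtain smoothness from Monge-Amp\`ere regularity applied componentwise, since each $\omega_{\varphi_i}^n$ equals a normalized $e^{-\sum\varphi_j}\mu_0$.
\end{itemize}

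The algebraic side links coercivity to (1) and (3). For (1) $\Rightarrow$ coercivity, I will argue by contradiction in the style of Berman-Boucksom-Jonsson and Li. If reduced coercivity fails, one obtains a $\mathbb T$-normalized destabilizing geodesic ray in $\prod\mathcal E^1$ with slope of $D$ at most $\varepsilon$ times the reduced $J$-slope. I will approximate this ray by $\mathbb T$-equivariant test configurations of the $L_i$ via multiplier ideal filtrations. The approximation must be done simultaneously for all $i$, using a common filtration on $\bigoplus_i R(X,L_i)$, so that the coupled non-archimedean Ding invariant $D^{NA}$ bounds the slope. This contradicts uniform reduced Ding stability.

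Conversely, coercivity $\Rightarrow$ (1) follows from the slope formula $\lim D(\varphi^t)/t = D^{NA}$ for rays attached to coupled test configurations. Its proof should go through for $L=\sum L_i=-K_X$, because the $\log\int e^{-\sum\varphi_i}$ term only involves $-K_X$.

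For (1) $\Leftrightarrow$ (3), I will use the coupled valuative description. The coupled $\delta$-invariant is defined as the infimum over valuations $v$ of $A_X(v)/\sum_i S_{L_i}(v)$, and its reduced version uses $\inf_{\xi\in N_{\mathbb R}} A(v_\xi)/\sum_i S_{L_i}(v_\xi)$ after twisting by $\xi$. The vanishing of the coupled Futaki character is exactly what makes $\sum_i S_{L_i}(v_\xi)$ affine and compatible under twisting, so that $\delta$ descends to the reduced version. Then Fujita--Li type arguments give $D^{NA}\ge \varepsilon J^{NA}_{\mathbb T}$ iff $\delta^{\mathrm{red}}_{\mathbb T}>1$, using the Fujita--Odaka approximation by basis-type divisors of $\bigoplus_i H^0(X,mL_i)$.

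I expect the main obstacle to be the approximation step in (1) $\Rightarrow$ coercivity. There, the non-archimedean limit of a reduced-destabilizing ray must be shown to stay reduced-destabilizing after approximation. This needs uniform control of $\inf_\xi J^{NA}(\phi_\xi)$ under twists, extending Li's analysis to the product of $k$ line bundles. My first attempt will be to work with the $\mathbb T$-equivariant Okounkov body of each $L_i$ and prove continuity of the twisted reduced $J$-norm componentwise.
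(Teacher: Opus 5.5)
Your analytic backbone is the one the paper outlines. It uses $\mathbb T$-coercivity of $D^{cp}$ on $\mathcal H^{\mathbb T_r}$ as the bridge. Existence $\Leftrightarrow$ coercivity goes via Berndtsson convexity, uniqueness and the Darvas--Rubinstein criterion (Step 2). Reductivity enters as you say, since a maximal torus of a connected reductive group is its own centralizer. Reduced uniform stability $\Rightarrow$ coercivity goes via Li's destabilizing-ray and approximation argument (Step 3). Routing (2) $\Rightarrow$ (1) through coercivity instead of the direct Step 1 is fine, but the slope formula for $D^{cp}$ alone does not give it. You also need $\liminf_{t\to\infty} J_{cp,\mathbb T}(\varphi_{1,t},\dots,\varphi_{k,t})/t \ge \mathbf J^{\mathrm{cp}}_{\mathbb T}(\{\mathcal X_i,\mathcal L_i\})$ along the rays of the test configurations. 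This means controlling the $t$-dependent minimizing $\sigma_t\in\mathbb T$ in \eqref{J_cp}, which is where the results of Hisamoto and Li cited after \eqref{J_cp} come in.

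The genuine error is in (1) $\Leftrightarrow$ (3). The reduced invariant \eqref{delta} is $\inf_v\sup_\xi$, not $\inf_v\inf_\xi$ as you wrote. With your quantifier the equivalence is false: condition (3) fails whenever $\mathbb T$ is nontrivial and $\mathrm{Val}^{\ast,\mathbb T}_X\neq\emptyset$, even if (2) holds. The reason is that for fixed $v$, $s^{-1}v_{s\eta}\to\mathrm{wt}_\eta$ as $s\to\infty$. Moreover $A_X(\mathrm{wt}_\eta)=\sum_i S_{L_i}(\mathrm{wt}_\eta)$ once the coupled Futaki character vanishes, so your infimum over $\xi$ is at most $1$.

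Concretely, take $k=1$ and $X=\mathbb P^1\times Y$ with $Y$ K\"ahler--Einstein and $\mathrm{Aut}(Y)$ discrete. Let $\mathbb T=\mathbb C^\ast$ act on $\mathbb P^1$, and let $v=\mathrm{ord}_{\mathbb P^1\times D}$ for a prime divisor $D\subset Y$. For $\xi\neq 0$, $v_\xi$ is the monomial valuation with weights $(|\xi|,1)$ along $\{0\}\times D$ or $\{\infty\}\times D$. Then $A_X(v_\xi)/S_X(v_\xi)=(1+|\xi|)/(|\xi|+S_Y(D))$. Its infimum is $1$, while its supremum is $A_Y(D)/S_Y(D)\ge\delta(Y)>1$.

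The same example shows that vanishing of the Futaki character does not make $\sum_i S_{L_i}(v_\xi)$ affine in $\xi$. What it gives is that $A_X(v_\xi)-\sum_i S_{L_i}(v_\xi)$ is independent of $\xi$. In general this quantity changes by the coupled Futaki character of $\xi$, up to the sign convention of \eqref{coupledFut} and with linearizations normalized as in Theorem~\ref{FZ18-3}. This mirrors the twist invariance of the coupled Ding invariant. That invariance is what lets you choose the best twist valuation by valuation, hence the supremum, dual to the infimum over twists in $\mathbf J^{\mathrm{cp}}_{\mathbb T}$. Your valuative step has to be rebuilt around it.
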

Definitions of coupled $\mathbb T$-Futaki character  is given in \eqref{coupledFut}, and the definition of  $\delta_{\mathbb T}^{\mathrm{red}}(X, \{L_i\}_{i=1}^k)$ 
is given in \eqref{delta}.

Next, we turn to weighted solitons.
Let $X$ be a Fano manifold. 
As before we regard $2\pi c_1(X)$ as a K\"ahler class, and 
a K\"ahler form $\omega$ is expressed as
$ \omega = \sqrt{-1}\, g_{i\barj}\, dz^i \wedge dz^\barj$.
Let $\mathbb T_r$ be the real compact torus in the automorphism group $\Aut (X)$, and assume that $\omega$ is $\mathbb T_r$-invariant. 
Since $X$ is Fano and simply connected
the $\mathbb T_r$-action is Hamiltonian with respect to $\omega$. Since the $\mathbb T_r$-action naturally lifts to the anti-canonical line bundle
$K_X^{-1}$ we have a canonically normalized moment map $\mu_\omega : X \to \mathfrak t^\ast$
where $\mathfrak t$ is the Lie algebra of $\mathbb T_r$ and $\mathfrak t^\ast$ its dual space, c.f. Lemma 3.2 in \cite{futaki04}. 
Let
$\Delta := \mu_{\omega}(X)$ be the moment polytope. Then $\Delta$ is independent of $\omega \in 2\pi c_1(X)$.
Let $v$ be a positive smooth function on $\Delta$. Regarding $\mu$ as coordinates on $\Delta$ using the action angle
coordinates, we may sometimes write $v(\mu)$ instead of $v$. The pull-back $\mu^\ast_\omega v$ is a smooth function on $X$, and
for this we write $v(\mu_\omega)=\mu^\ast_\omega v = v \circ \mu_\omega$. 

We say that a K\"ahler metric $\omega$ in $2\pi c_1(X)$ a {\it weighted $v$-soliton} or simply {\it $v$-soliton} if
$$ \Ric(\omega) - \omega = \sqrt{-1} \partial\barpartial \log v(\mu_\omega).$$ 
We also call $\omega$ simply a {\it weighted soliton}
when it is $v$-soliton for some $v$, or when $v$ is obvious from the context.
Examples of weighted solitons are a K\"ahler-Ricci soliton when $v(\nu) = e^{\langle\nu,\xi\rangle}$ for some $\xi \in \mathfrak t$, 
a Mabuchi solitons when $v(\mu) = \langle\mu, \xi\rangle + a$ for some positive constant $a$, and a basic metric which 
which induces Calabi-Yau cone metrics outside the zero sections of the canonical line bundle and hence
a Sasaki-Einstein metrics on the $U(1)$-bundle of $K_X^{-1}$ when $v(\mu) = \lb \langle\mu,\xi\rangle + a\rb^{-m-2}$, see \cite{Inoue19}, \cite{Inoue22}, \cite{Lahdili18}, \cite{ACL21}, \cite{AJL21}, \cite{HanLi23}, 
\cite{LiChi21}.

The K-polystability characterization for K\"ahler-Einstein metrics is also extended for weighted solitons by
 Han-Li \cite{HanLi23}, Blum-Liu-Xu-Zhuang \cite{BLXZ} and Li \cite{LiChi21}. We apply it to extend a result of Cao-Sun-Zhang-Yau
\cite{CaoSunYauZhang2022} concerning deformations of K\"ahler-Einstein Fano manifolds to Fano manifolds with weighted solitons.

In this paper we consider the Kuranishi family 
$\varpi : \mathfrak X \to B$ of deformations of a Fano manifold $X$
which is a complex analytic family of Fano manifolds where $B$ is an open set in $\bfC^d$
containing the origin $0$ and we write $X_t:= \varpi^{-1}(t)$ and require $X_0 = X$, c.f. \cite{Kodaira86}, \cite{KodairaSpencer58}, \cite{Kuranishi64}, \cite{MorrowKodaira}, 
\cite{Sun12}, \cite{FSZ22}.
For a given K\"ahler form $\omega \in 2\pi c_1(X)$
let $f \in C^\infty (X)$ satisfy
\begin{equation*}\label{Kura1}
\Ric(\omega) - \omega = \sqrt{-1} \partial\barpartial f.
\end{equation*}
The Kuranishi family we consider in this paper is described by a family of vector valued $1$-forms 
parametrized by $t \in B$
$$\varphi(t) = \sum_{i=1}^k t^i\varphi_i + \sum_{|I|\ge2}t^I\varphi_I\ \in\ A^{0,1}(T^\prime X), \quad k = \dim H^1(X,\mathcal O(T^\prime X)),$$
such that 
\begin{equation}\label{Kura}
\begin{cases}
 \barpartial\varphi(t) = \frac12 [\varphi(t),\varphi(t)];\\
 \barpartial^\ast_f\, \varphi(t) = 0;\\
 \varphi_1, \cdots, \varphi_k\ \text{form a basis of the space of all}\\
 \qquad\qquad T^\prime X\text{-valued}\ \Delta_f\text{-harmonic}\ (0,1)\text{-forms}\\
\end{cases}
\end{equation}
where $\Delta_f = \barpartial_f^\ast\barpartial + \barpartial\,\barpartial_f^\ast$ is the weighted Hodge Laplacian with
$\barpartial_f^\ast$ the formal adjoint of $\barpartial$ with respect to the weighted $L^2$-inner product
$\int_X (\cdot,\cdot) e^f \omega^n$.
See \cite{FSZ22} for more detail about this Kuranishi family. We showed in \cite{FSZ22} that the K\"ahler form 
$\omega$ on $X_0 = X$ remains to be a K\"ahler form
on $X_t$. 
The second half of this paper is to survey the following result. We write $T$ instead of $\mathbb T_r$ to simplify the
notation.
\begin{theorem}[\cite{futaki24PAMQ}]\label{Main Thm} Suppose that $X_0 = X$ has a weighted $v$-soliton. 
Consider the Kuranishi family (\ref{Kura}) with $f=\log v(\mu_\omega)$. Then, shrinking $B$ if necessary,  the following statements are equivalent.
\begin{enumerate}
\item[(1)]  $X_t$ has a weighted $v$-soliton for all $t\in B$.
\item[(2)] $T$ is included in $\Aut(X_t)$, and for the centralizer $\Aut^T(X_t)$ of $T$ in $\Aut(X_t)$, $\dim \Aut^T(X_t) = \dim \Aut^T(X_0)$ for all 
$t \in B$.
\item[(3)] $T$ is included in $\Aut(X_t)$, and the identity component $\Aut_0^T(X_t)$ of $\Aut^T(X_t)$ is isomorphic to $\Aut_0^T(X_0)$ for all 
$t \in B$.
\end{enumerate}
\end{theorem}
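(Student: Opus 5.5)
The plan is to follow the strategy of Cao-Sun-Yau-Zhang \cite{CaoSunYauZhang2022} for K\"ahler-Einstein Fano manifolds, with the K-polystability characterization of weighted solitons of Han-Li \cite{HanLi23}, Blum-Liu-Xu-Zhuang \cite{BLXZ} and Li \cite{LiChi21} replacing Chen-Donaldson-Sun and Tian. The key analytic input is the deformation theory of \cite{FSZ22}. Because the Kuranishi family is normalized by $\barpartial_f^\ast\varphi(t)=0$ with $f=\log v(\mu_\omega)$, the form $\omega$ stays K\"ahler on $X_t$. Moreover, $T$-invariance of $\omega$ and $f$ makes the construction $T$-equivariant once the $T$-action extends.

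The equivalence (2)$\Leftrightarrow$(3) is soft. Any $T$-centralizer $\Aut^T(X_t)$ in the family specializes to a subgroup of $\Aut^T(X_0)$ by upper semicontinuity of automorphism groups. By the weighted Matsushima theorem, $\Aut_0^T(X_0)$ is reductive because $X_0$ carries a $v$-soliton. Equal dimension of a reductive group and its deformation then forces an isomorphism of identity components, since a closed subgroup of a connected group with the same dimension coincides with its identity component. The converse (3)$\Rightarrow$(2) is trivial. For (1)$\Rightarrow$(2), note that a $v$-soliton on $X_t$ requires $T\subset \Aut(X_t)$ in order to make sense, and then the weighted Matsushima theorem makes $\Aut^T(X_t)$ reductive. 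I would then show, via the harmonic theory of $\Delta_f$ on $T'X$-valued forms and the identification of holomorphic vector fields with eigenfunctions of the $f$-weighted Laplacian with eigenvalue $-1$ (as in \cite{FSZ22}), that the dimension of the $T$-invariant part of this eigenspace is locally constant under the soliton deformation. This would follow from continuity of the soliton metrics in $t$, which comes from the implicit function theorem around the soliton on $X_0$ along the directions where it applies.

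The main direction is (2)$\Rightarrow$(1). I would argue by openness of $T$-equivariant K-polystability. I would use the weighted Futaki invariant and the weighted $\delta$-invariant, namely the $\mathbb T$-reduced $\delta^{\mathrm{red}}$ of \cite{BLXZ}. First, under (2) the $T$-action extends to the whole family after shrinking $B$, and the group $G=\Aut_0^T(X_0)$ acts on $B$ compatibly with the Kuranishi family. The $T$-invariant deformations are exactly those with $\varphi(t)$ $T$-invariant, and the dimension condition forces the stabilizer of each $t$ in $G$ to have full dimension. Hence $G_0$ fixes all of $B$, so each $X_t$ is $G$-equivariant. Second, I would apply the local GIT picture: by Luna slice/Kuranishi theory with the reductive group $G$, a deformation $X_t$ fails to be polystable only if it degenerates to some $X_{t'}$ in the orbit closure with a strictly larger automorphism group. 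That possibility is excluded because $G$ acts trivially. Third, the weighted Futaki character of $X_t$ restricted to $\mathfrak g$ equals that of $X_0$ by deformation invariance of the character on the common torus, so it vanishes. Lower semicontinuity of the reduced $\delta$-invariant in families then gives $\delta^{\mathrm{red}}_{T}(X_t)>1$ for small $t$. The weighted YTD theorem then produces a $v$-soliton on $X_t$. Alternatively, I would solve directly by the implicit function theorem, working on the space of $G$-invariant potentials orthogonal to the kernel. The kernel consists of $T$-invariant holomorphy potentials, and the obstruction is precisely the weighted Futaki invariant, which vanishes.

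The hardest step is the second one: making rigorous that no jump of automorphism group, and hence no strictly semistable degeneration, occurs nearby. My first attempt would be the implicit function theorem route, since it avoids semicontinuity of $\delta$. There the crux is showing that the cokernel of the linearized weighted soliton operator on $X_t$ is identified with $\mathrm{Lie}\,\Aut^T(X_t)$, and that this cokernel has constant dimension precisely under hypothesis (2).
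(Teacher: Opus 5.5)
\textbf{The gap is in (1)$\Rightarrow$(2).} Continuity of the soliton metrics in $t$ cannot give what you want. (You have not actually established this continuity either, since the implicit function theorem does not apply directly when $\Aut_0^T(X_0)$ acts nontrivially on $B$.) Even granted continuity, the multiplicity of the bottom eigenvalue $1$ of the weighted Laplacian is only upper semicontinuous: eigenvalues equal to $1$ at $t=0$ may move above $1$ for $t\neq0$. So you recover only $\dim\Aut^T(X_t)\le\dim\Aut^T(X_0)$, which holds anyway.

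The reverse inequality genuinely fails along sequences of solitons. Already in the K\"ahler--Einstein case (e.g.\ near the Mukai--Umemura threefold) there are K\"ahler--Einstein deformations $X_t$ with smaller automorphism group whose metrics converge to the one on $X_0$. What forces constancy is that \emph{every} $X_t$, $t\in B$, has a soliton, and your argument never uses this.

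The missing idea, which is the paper's Step 3 following \cite{CaoSunYauZhang2022}, is a contradiction argument through weighted K-polystability:
\begin{enumerate}
\item[(a)] Suppose $G=\Isom_0^T(X_0,\omega)$ acted nontrivially on $B$. Then a one-parameter subgroup of $G^{\bfC}$ degenerates some $X_t$, $t\neq0$, to $X_0$. This gives a non-product $T$-equivariant test configuration of $X_t$.
\item[(b)] Its weighted Donaldson--Futaki invariant is the weighted Futaki invariant of $X_0$, which vanishes because $X_0$ has a $v$-soliton.
\item[(c)] This contradicts the weighted K-polystability of $X_t$, which follows from its $v$-soliton \cite{HanLi23}, \cite{BLXZ}, \cite{LiChi21}.
\end{enumerate}
Hence $G$ fixes $B$ pointwise, so it preserves $\omega$ and $\varphi(t)$. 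This gives $G\subset\Isom_0(X_t,\omega)$, $T\subset\Aut(X_t)$ and $\Aut_0^T(X_0)=G^{\bfC}\subset\Aut_0^T(X_t)$. Upper semicontinuity of $\dim H^0(X_t,T'X_t)^T$ then yields (3), not merely (2). The local GIT degeneration you invoke in the second step of your (2)$\Rightarrow$(1) is exactly the tool needed here, used in the opposite direction.

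\textbf{Your \emph{soft} argument for (2)$\Rightarrow$(3) is also invalid.} $\Aut^T(X_t)$ is not a subgroup of $\Aut^T(X_0)$, since they act on different complex manifolds. Moreover, a specialization of groups of equal dimension need not be an isomorphism (think of $\mathbb G_m$ degenerating to $\mathbb G_a$). So the fact about closed subgroups of equal dimension has nothing to act on. The inclusion that actually exists goes the other way, $\Aut_0^T(X_0)\hookrightarrow\Aut_0^T(X_t)$, and it requires knowing that $G$ acts trivially on $B$. Under (2) you do get this from the stabilizer-dimension claim in the first step of your (2)$\Rightarrow$(1) (the paper's Lemma). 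The inclusion-plus-dimension argument above then gives (3). The paper itself proves (2)$\Rightarrow$(1)$\Rightarrow$(3).

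\textbf{Your (2)$\Rightarrow$(1) is essentially the paper's route.} You perturb to weighted extremal metrics on $X_t$ once the torus acts trivially on $B$ (a Rollin--Simanca--Tipler type result), then show they are solitons because $\Fut_{v,w}$ vanishes on $X_t$. The paper obtains this vanishing from the Ricci potential $f+\log\det(I-\varphi\barvarphi)$ of Theorem~\ref{deform2} and the invariance of $\varphi(t)$ under the automorphisms, rather than from deformation invariance of the character on the torus. Your $\delta^{\mathrm{red}}$-semicontinuity alternative is unnecessary.
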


\section{Coupled K\"ahler-Einstein metrics.}
The general idea of the variational approach is the following. 
On a finite dimensional variational problem, there is a unique critical point
if the functional is strictly convex and proper.

K\"ahler-Einstein problem is set to a variational problem for functionals
called Mabuchi functional or Ding functional on the space of K\"ahler metrics. 
To describe the properness, Aubin's $J$-functional (or Darvas $d_1$-distance) has been used to play the role of distance
on the space of K\"ahler metrics.
Uniform stability is a way to describe the properness by means of the infinitesimal behaviors at $\infty$ of
the functionals using
$$ \lim_{s \to \infty} \frac{F(s)}{s}$$
the slope or ``non-Archimedian functional'' $F^{\mathrm{NA}}$ of a functional $F$ along a ray $\varphi(s)$, $s \in [0,\infty)$.
The following test configurations play the algebraic geometric role of the rays.

\begin{definition}
For an ample line bundle $L$ over a projective variety  $X$, 
a test configuration of exponent $r$
is a normal polarized scheme $(\mathcal X, \mathcal L)$ with the following properties:
\begin{enumerate}
\item[(1)] There is a $\bfC^*$-action on ${\mathcal X}$ lifting to $\mathcal L$,
\item[(2)] There is a flat $\bfC^*$-equivariant morphism $\pi : {\mathcal X} \to \bfP^1$ for the standard $\bfC^*$-action on $\bfP^1$,
such that
over $\bfP^1-\{0\}$, $(\mathcal X, \mathcal L)$ is equivarianty isomorphic to 
$(X \times (\bfC^\ast \cup \{\infty\}), p_X^\ast L^r)$
with the trivial action on the first factor $X$.
\end{enumerate}
The central fiber $X_0 = \pi^{-1}(0)$ is possibly singular, and
$\bfC^*$-action induces a $\bfC^*$-action on the central fiber
$L_0 \to X_0 = \pi^{-1}(0)$. 

Moreover if there is a $\bfC^\ast$-action on the polarized manifold $(X,L)$
and $(\mathcal X, \mathcal L)|_{\mathbb P^1 - \infty} \cong (X,L) \times \mathbb C$ 
with the diagonal $\bfC^\ast$-action
then $(\mathcal X, \mathcal L)$ 
is called a product configuration. 
\end{definition}

In comparison with the variational approach, $t \in \mathbb C^\ast$ corresponds to
$$ s = -\log |t| $$
and thus, the metric behavior near the central fiber $X_0$ describes the behavior (i.e. slope) of the Mabuchi and Ding functionals
as $s \to \infty$. In fact, Phong-Sturm \cite{PH07} showed that a weak geodesic ray in the space of K\"ahler metrics in $c_1(L)$ is associated with any test configuration.


Take
ample $\mathbb Q$-divisors $L_1, \cdots , L_k$ with 
$$- K_X \sim_{\mathbb Q} L_1 + \cdots + L_k,$$
and K\"ahler metrics $\omega_i \in c_1(L_i)$. Here, for $\mathbb Q$-divisors $L$ and $L^\prime$,  $L \sim_\mathbb Q L^\prime$ 
means $mL$ and $mL^\prime$ are divisors for some $m \in \mathbb Z$ and $mL$ and $mL^\prime$ are linearly equivalent. 
According to Hultgren-Witt Nystr\"om, 
the Donaldson-Futaki  invariant $\mathrm{DF}(\mathcal X,(\mathcal L_i))$ for test configurations 
$\{(\mathcal X,\mathcal L_i)\}_{i=1}^k$
of $\{(X,L_i)\}_{i=1}^k$ is defined by

\begin{eqnarray*}
&&\mathrm{DF}(\mathcal X,(\mathcal L_i))   \\
&& =  \frac{1}{n+1}\sum_{i=1}^k \frac{ \mathcal L_i \cdots \mathcal L_i}{(\mathcal L_i|_1)^n} -
\frac1{(-K_{X})^n}{ (\sum\mathcal L_i +K_{\mathcal X/\bfC})( \sum\mathcal L_i) \cdots (\sum\mathcal L_i)}
\end{eqnarray*}
where $(\mathcal L_i|_1)^n$ and $(-K_{X})^n$ are the intersections on $X$, and
$\mathcal L_i \cdots \mathcal L_i$ and $ (\sum\mathcal L_i +K_{\mathcal X/\bfC})( \sum\mathcal L_i) \cdots (\sum\mathcal L_i)$ are the intersections on $\mathcal X$.
Note that Hultgren-Witt Nystr\"om assume that the total space $\mathcal X$ of the test configuration is the same for all $\mathcal L_i)$.

\begin{definition} [Case $c_1(X) > 0$,\ $k=1$, Tian, Donaldson]
$(X,L = -K_X)$ is K-polystable if $\mathrm{DF}(\mathcal X,\mathcal L))$ is 
non-negative for any test configurations, and equality holds only for product configurations.
\end{definition}
\begin{definition} [Case $c_1(X) > 0$,\ $k$ general, Hultgren-Witt Nystr\"om]
$(X,L_1 + \cdots + L_k = -K_X)$ is K-polystable if $\mathrm{DF}(\mathcal X,(\mathcal L_i))$ is 
non-negative for any test configurations, and equality holds only for product configurations.
\end{definition}

Hultgren-Witt Nystr\"om showed\\
--\ existence implies K-polystability, and \\
--\ uniqueness modulo connected group action (an extension of Bando-Mabuchi \cite{bandomabuchi87}, Berndtsson \cite{Bern15}).

Fujita-Hashimoto defined Ding invariant of $\{\mathcal X_i, \mathcal L_i\}$
using different choice of  $\mathcal X_i$'s as follows (extending the definition due to Berman \cite{Berman16}, Fujita \cite{fujita18}, \cite{fujita19} for $k=1$). 
\begin{definition}[Case $c_1(X) > 0$, k=1]\label{Ding1} Ding invariant $\mathrm{Ding}(\mathcal X, \mathcal L)$ of $(X, K_X^{-1})$ is defined by
$$ \mathrm{Ding}(\mathcal X, \mathcal L) = - \frac{\mathcal L^{n+1}}{(n+1)(-K_X)^n} - 1 
+ \mathrm{lct}(\mathcal X,\mathcal D_{\mathcal X,\mathcal L};\mathcal X_0) 
$$
where $\mathcal D_{\mathcal X,\mathcal L} \sim_\mathbb Q - \mathcal L - K_{\mathcal X/\mathbb P^1}$ supported on $\mathcal X_0$, and 
$$ \mathrm{lct}(\mathcal X,\mathcal D_{\mathcal X,\mathcal L};\mathcal X_0) = \sup \{t\ |\ (\mathcal X, \mathcal D_{\mathcal X,\mathcal L} + t \mathcal X_0) \text{\ log canonical}\}. $$
is the log-canonical threshold. 
Here log canonical means that $$\mathrm{coeff}_E (K_\mathcal Y - \mu^\ast (K_\mathcal X + \mathcal D_{\mathcal X,\mathcal L} + t \mathcal X_0) \ge -1$$
for  any prime divisor $E$ over $\mathcal X$ (i.e. $\mu : \mathcal Y \to \mathcal X$ is proper birational and $\mathcal Y$ regular and $E \subset \mathcal Y$).
\end{definition}
Ding invariant is the slope of the Ding functional, c.f. section 4. Berman \cite{Berman16} observed the appearance of the log-canonical threshold when he 
clarified the difference between the Donaldson-Futaki invariant and the
Ding invariant.

\begin{remark} With $k=1$ and general ample line bundle $L$, the Donaldson-Futaki invariant is used to test the existence of
cscK metrics in $c_1(L)$. On the other hand, with $k=1$ and $L = K_M^{-1}$ on Fano manifolds, the Ding invariant is used to test the
existence of K\"ahler-Einstein metrics in $c_1(M)$. The Donaldson-Futaki invariant is closely related with the slope of the Mabuchi functional.
See the equation (30) in Berman-Darvas-Lu \cite{BDL}.
Mabuchi functional and Ding functional are defined in section 4.
Donaldson-Futaki invariant and Ding invariant have the relation
$$
DF(\mathcal X, \mathcal L) \ge Ding(\mathcal X, \mathcal L),
$$
see Lemma 2.53 in Xu \cite{XuLN}, and 
coincide for test configurations with integral central fiber. 
For the product configurations they are equal to the
classical Futaki invariant \cite{futaki83.1} for the infinitesimal generator of the $\mathbb C^\ast$-action. See Remark \ref{Fut2}..
\end{remark}

\begin{definition} [k general, sum of test configurations by Fujita-Hashimoto] Let 
 \ $\mathbb T$ be the maximal torus in the reduced automorphism group. 
For any $1 \le i \le k$, take any ${\mathbb T}$-equivariant test
configuration $\pi : (\mathcal X_i,\mathcal L_i) \to \mathbb C$ of $(X, L_i$). 
Take a sufficiently divisible $r_0 \in r\mathbb Z_{>0}$. Set
$$\mathcal R^i := \bigoplus_{m \in r_0\mathbb Z_{\ge 0}} \mathcal R^i_m 
:= \bigoplus_{m \in r_0\mathbb Z_{\ge 0}} H^0(\mathcal X_i,m\mathcal L_i),
$$ 
$$
\mathcal R^i_m = \bigoplus_{\lambda \in \mathbb Z} t^{-\lambda}\mathcal F_i^{\lambda} R^i_m
$$
where 
$$\mathcal F_i^{\lambda} R^i_m := \mathcal F_{\mathcal X_i,\mathcal L_i}^{\lambda} H^0(X,mL) 
:= \mathrm{image} \left(H^0(\mathcal X_i, m\mathcal L_i)_\lambda \to H^0(X_1, mL_1)\right) $$
with $H^0(\mathcal X_i, m\mathcal L_i)_\lambda$ the $\lambda$-weight space of the $\mathbb C^\ast$-action.
Note by the 
Rees correspondence we have $(\mathcal X_i, \mathcal L_i) \cong (\mathrm{Proj}_{\mathbb C[t]}(\mathcal R^i), \mathcal O(1))$.
Let us take $(\mathbb C^\ast \times \mathbb T)$-equivariant common resolutions
$ \sigma_i : \mathcal Z \to \mathcal X_i$
with $\mathcal Z$ normal and $\sigma_i$ an isomorphism over $\mathbb C \backslash \{0\}$. 
We define
\begin{eqnarray*}
\mathcal R_m &:=&\mathrm{Image} (  \sigma_1^\ast \mathcal R^1_m \otimes_{{\mathbf C}[t]} \cdots
\otimes_{{\mathbf C}[t]}  \sigma_k^\ast \mathcal R^k_m \\
&\hookrightarrow & H^0(\mathcal Z, m\sigma_1^\ast \mathcal L_1) \otimes_{{\mathbf C}[t]} \cdots
\otimes_{{\mathbf C}[t]} H^0(\mathcal Z, m\sigma_k^\ast \mathcal L_k)\\
&\hookrightarrow& H^0(\mathcal Z, m(\sigma_1^\ast \mathcal L_1 + \cdots + \sigma_k^\ast \mathcal L_k)) )
\end{eqnarray*}
and
$$ \mathcal R :=\bigoplus_{m \in r_0\mathbb Z_{\ge 0}} \mathcal R_m 
 \subset \bigoplus_{m \in r_0\mathbb Z_{\ge 0}} H^0(\mathcal Z, m(\sigma_1^\ast \mathcal L_1 + \cdots + \sigma_k^\ast \mathcal L_k)). $$
 Then 
 $\mathcal R$ is a finitely generated $\mathbf C[t]$-algebra, and induces a $\mathbb T$-equivariant test configuration 
 $(\mathcal X,\mathcal L)$, called the sum configuration.
\end{definition}
\begin{definition}[Coupled Ding invariant by Fujita-Hashimoto]
Let $(\mathcal X,\mathcal L)$ be the sum configuration for $\{\mathcal X_i, \mathcal L_i\}$ as in the previous definition. Define the
coupled Ding invariant by
 $$\mathrm{Ding}(\{\mathcal X_i, \mathcal L_i\}) = \mathrm{Ding}(\mathcal X, \mathcal L) + \frac{\mathcal L^{n+1}}{(n+1)L^n} - \sum_{i=1}^k \frac{\mathcal L_i^{n+1}}{(n+1)L_i^n}.$$
\end{definition}

 \begin{definition}
 For $k=1$, replacing $\mathcal X$ by a birational model dominating both $\mathcal X$ and $X\times \mathbb P^1$, we define
 $$ {\mathbf J}(\mathcal X, \mathcal L) = \frac{\mathcal L\cdot \rho^\ast (L\times \bf P^1)^n }{L^n} - \sum \frac{\mathcal L^{n+1}}{(n+1)L^n}
$$
with $\rho : \mathcal X \to X \times \mathbb P^1$. 
${\mathbf J}(\mathcal X, \mathcal L))$ is the non-Archimedian $J$-functional. See section 4.
For the coupled case, define
$$ {\mathbf J}^\mathrm{cp}(\{\mathcal X_i, \mathcal L_i\}) := \sum_{i=1}^k {\mathbf J}(\mathcal X_i, \mathcal L_i),
$$
$$ {\mathbf J}^\mathrm{cp}_{\mathbb T}(\{\mathcal X_i, \mathcal L_i\})
  := \inf_{\xi \in N_{\mathbb Q}(\mathbb T)}\ \sum_{i=1}^k {\mathbf J}(\mathcal X_{i,\xi}, \mathcal L_{i,\xi})
$$
where $N_{\mathbb Q}(\mathbb T)$ is the rational points in the Lie algebra of $\mathbb T$ and $(\mathcal X_{i,\xi}, \mathcal L_{i,\xi})$ is the $\xi$-twist of $(\mathcal X_i, \mathcal L_i)$ obtained by twisting the $\mathbb C^\ast$-action
by the action induced by $\xi$.
\end{definition}

\begin{definition}[Fujita-Hashimoto]
We say $(X, \{L_i\}_{i=1}^k)$ is $\mathbb T$-equivariantly coupled Ding semistable if 
$$ \mathrm{Ding} \left(\{\mathcal X_i,\mathcal L_i\}_{i=1}^k   \right) \ge 0.$$
We say $(X, \{L_i\}_{i=1}^k)$ is $\mathbb T$-reduced uniformly coupled Ding stable if 
there exists a positive real number $\epsilon > 0$ such that 
$$ \mathrm{Ding} \left(\{\mathcal X_i,\mathcal L_i\}_{i=1}^k   \right) \ge \epsilon {\mathbf J}^{\mathrm cp}_\mathbb T (\{\mathcal X_i, \mathcal L_i\})).$$
\end{definition}
\begin{theorem}[= Theorem \ref{fujita-hashimoto}, Fujita-Hashimoto]
Let $\mathbb T_r \subset \mathbb T$ be the maximal compact torus of $\mathrm{Aut}(X)$. Then the following are equivalent.
\begin{enumerate}
\item[(1)] $(X, \{L_i\}_{i=1}^k)$ is $\mathbb T$-reduced uniformly coupled Ding stable.
\item[(2)] $(X, \{L_i\}_{i=1}^k)$ admits a $\mathbb T_r$-invariant coupled K\"ahler-Einstein metrics.
\item[(3)]$(X, \{L_i\}_{i=1}^k)$ has vanishing coupled $\mathbb T$-Futaki character 
and $$\delta_{\mathbb T}^{\mathrm{red}}(X, \{L_i\}_{i=1}^k) > 1.$$
(Definitions of coupled $\mathbb T$-Futaki character  is given in \eqref{coupledFut}, and the definition of  $\delta_{\mathbb T}^{\mathrm{red}}(X, \{L_i\}_{i=1}^k)$ 
is given in \eqref{delta}.)
\end{enumerate}
\end{theorem}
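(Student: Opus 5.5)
The plan follows the variational strategy of Li \cite{LiChi22} and Berman-Boucksom-Jonsson \cite{BBJ}, adapted to the coupled setting. We prove the equivalences in the order (2) $\Rightarrow$ (1) $\Rightarrow$ (3) $\Rightarrow$ (2).

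\emph{Setup.} We work on the product space $\mathcal H = \prod_{i=1}^k \mathcal H(L_i)$ of $\mathbb T_r$-invariant potentials. On it we use the coupled Ding functional
$$D^{\mathrm{cp}}(\phi_1,\dots,\phi_k) = -\sum_{i} E_i(\phi_i) - \log\int_X e^{-(\phi_1+\cdots+\phi_k)}.$$
Here $E_i$ is the Monge-Amp\`ere energy in $c_1(L_i)$, and $\sum\phi_i$ is a metric on $-K_X$. Its critical points are exactly the coupled K\"ahler-Einstein tuples. By Berndtsson's convexity \cite{Bern15}, $D^{\mathrm{cp}}$ is convex along (weak) geodesics in each factor, since $\sum\phi_i$ is then psh in the geodesic variable.

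\emph{(2) $\Rightarrow$ (1).}
\begin{enumerate}
\item[(a)] Existence gives $\mathbb T$-reduced coercivity of $D^{\mathrm{cp}}$:
$$D^{\mathrm{cp}} \ge \epsilon\, \inf_{\sigma\in\mathbb T} \sum_i J_i(\sigma^\ast\phi_i) - C.$$
This follows by the Darvas-Rubinstein argument, using the uniqueness modulo the connected automorphism group proved by Hultgren-Witt Nystr\"om.
\item[(b)] Given $\mathbb T$-equivariant test configurations $(\mathcal X_i,\mathcal L_i)$, take their Phong-Sturm rays \cite{PH07}.
\item[(c)] Compute the slope at infinity. The energy slopes give the terms $\mathcal L_i^{n+1}/((n+1)L_i^n)$. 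The slope of $-\log\int e^{-\sum\phi_i(s)}$ is governed by the log canonical threshold on the sum configuration, following Berman \cite{Berman16}. The sum configuration is designed precisely because its filtration is the product filtration, so it computes the slope of $\sum\phi_i(s)$. The correction terms in the coupled Ding invariant account for the difference $\mathcal L^{n+1}/((n+1)L^n)$ versus $\sum_i \mathcal L_i^{n+1}/((n+1)L_i^n)$.
\item[(d)] The slope of the reduced $J$ is $\mathbf J^{\mathrm{cp}}_{\mathbb T}$. This requires showing that the infimum over $\mathbb T$ commutes with the slope; we handle it as in Hisamoto and Li, via twists by $\xi\in N_{\mathbb Q}$.
\end{enumerate}

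\emph{(1) $\Rightarrow$ (3).}
\begin{enumerate}
\item[(a)] Apply (1) to product configurations generated by $\xi\in N(\mathbb T)$. There $\mathbf J^{\mathrm{cp}}_{\mathbb T}=0$, and the coupled Ding invariant equals the coupled Futaki character. Applying this to both $\pm\xi$ forces the character to vanish.
\item[(b)] Specialize to test configurations induced by $\mathbb T$-invariant divisorial valuations $E$ (deformation to the normal cone in each $L_i$). Their Ding invariants are expressed through $A_X(E)-\sum_i S_{L_i}(E)$.
\item[(c)] Feed these into the uniform inequality to get $A_X(E)\ge(1+\epsilon')\inf_\xi\sum_i S_{L_i}(E_\xi)$, that is, $\delta^{\mathrm{red}}_{\mathbb T}>1$ in the sense of \eqref{delta}.
\end{enumerate}

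\emph{(3) $\Rightarrow$ (2).} This is the hard direction, and we expect it to be the main obstacle. The plan is to argue by contradiction, in the manner of BBJ/Li.
\begin{enumerate}
\item[(a)] If $D^{\mathrm{cp}}$ is not reduced-coercive, we obtain a destabilizing sequence. After normalizing by the $\mathbb T$-action, using the vanishing of the Futaki character so that $D^{\mathrm{cp}}$ is $\mathbb T$-invariant, it yields a geodesic ray in the product space. Along this ray $D^{\mathrm{cp}}$ has nonpositive slope, while each reduced $J$ has positive slope.
\item[(b)] Approximate the ray by multiplier-ideal rays, i.e.\ by filtrations of each $R(X,L_i)$. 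The coupling enters only through $e^{-\sum\phi_i}$, so we approximate each factor separately. We then use the sum filtration together with the valuative criterion: the slope of the Ding term is an infimum over valuations of $A_X(v)$ plus the sum of the $\phi_i^{\mathrm{NA}}(v)$.
\item[(c)] Compare this with $\delta^{\mathrm{red}}_{\mathbb T}>1$, which gives for the limiting non-Archimedean potentials $\mathrm{Ding}^{\mathrm{NA}} \ge (1-\delta^{-1})$ times the reduced $\mathbf J^{\mathrm{NA}}$. This contradicts nonpositivity.
\end{enumerate}

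The delicate points are two. First, the reduced version requires twisting each filtration simultaneously by the same $\xi$ and controlling the infimum; here we rely on a convexity/properness argument in $\xi$, as in Li. Second, we must check that the Fujita-type inequality $\mathrm{Ding}\ge (1-\delta^{-1})\mathbf J$ survives coupling. For this we would redo Fujita's argument via basis-type divisors of the product filtration on $\bigotimes_i R_m^i$.

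Finally, coercivity gives existence of a minimizer in the finite-energy space. Its regularity and the fact that it is a genuine coupled K\"ahler-Einstein tuple follow from the variational method of \cite{BBJ} and Hultgren-Witt Nystr\"om. $\mathbb T_r$-invariance comes from working throughout in the invariant space.
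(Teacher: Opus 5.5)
Your plan is sound in outline, but it closes the circle differently from the paper. The survey sketches only (1) $\Leftrightarrow$ (2), and does so entirely through $\mathbb T$-coercivity of $D^{cp}$ on $\mathcal H^{\mathbb T_r}$. The three steps are: existence $\Leftrightarrow$ coercivity (Prop.~8.3 of Fujita--Hashimoto, via Darvas--Rubinstein), (2) $\Rightarrow$ (1) (Thm.~8.5), and (1) $\Rightarrow$ coercivity (Thm.~8.10). Condition (3) is only defined there and attributed to Fujita--Hashimoto.

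Your (2) $\Rightarrow$ (1) is essentially Steps 1--2 of the paper. For existence, however, you start from (3) instead of (1). You replace Step 3 by a coupled, reduced Fujita-type inequality $D^{cp,\mathrm{NA}}\ge c\,\mathbf J^{\mathrm{cp}}_{\mathbb T}$, derived from $\delta^{\mathrm{red}}_{\mathbb T}>1$ and $\mathrm{Fut}=0$, and you close the cycle with (1) $\Rightarrow$ (3).

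What this buys you is a single loop covering all three conditions. The key inequality can also be checked directly on the non-Archimedean limit of the destabilizing ray. Both $L^{\mathrm{NA}}(\sum_i\phi_i)=\inf_v\bigl(A_X(v)+\sum_i\phi_i(v)\bigr)$ and $E_i^{\mathrm{NA}}(\psi)\le \psi(v)+S_{L_i}(v)$ split over $i$, so the coupling is essentially free.

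The price is two inputs that the paper's route to existence never needs.
\begin{enumerate}
\item[(i)] Twist bookkeeping. With $\mathrm{Fut}=0$, the coupled $\beta$-invariant and $D^{cp,\mathrm{NA}}$ are unchanged under a simultaneous twist by $\xi$. This is what turns the $v$-dependent $\xi$ in $\inf_v\sup_\xi$ into an $\inf_\xi$ of $\sum_i\mathbf J$.
\item[(ii)] An approximation step in (1) $\Rightarrow$ (3). For divisors $E$ over $X$ there is no deformation to the normal cone, and the $\mathrm{ord}_E$-filtrations of $R(X,L_i)$ need not be finitely generated. So (1) must first be extended to them, as in Fujita's valuative criterion.
\end{enumerate}
The paper's route instead feeds hypothesis (1), a statement about test configurations, directly into the coercivity argument, with no valuative input needed for existence.

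Two small corrections. First, the $\delta$-argument yields some constant $c(n,\delta)>0$, not necessarily $1-\delta^{-1}$; only positivity matters. Second, along the destabilizing ray, what has positive slope is the single reduced coupled functional $\inf_{\sigma\in\mathbb T}\sum_i J_i(\sigma^\ast\phi_i)$ with a common $\sigma$, not each factor's reduced $J$ separately.
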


\section{Coupled $\mathbb T$-Futaki character.}

Now we suppose we have a decomposition $2\pi c_1(X) = \gm_1 + \cdots + \gm_k$
with $\omega_i \in \gm_i$. Since both $\Ric(\omega_i)$ and $\sum_{j=1}^k$ represent
$c_1(M)$ there exist real smooth functions $f_i$'s such that 
\begin{equation}\label{coupled}
\Ric(\omega_i) - i\partial\barpartial f_i =  \sum_{j=1}^k \omega_j, \quad i = 1,\ \cdots,\ k.
\end{equation}
Since $\partial\bar{\partial}\log e^{f_i}\omega_i^n$ are equal for all i, by adding suitable constants to $f_i$'s
we may normalize $f_i$ so that 
\begin{equation}\label{normalization}
e^{f_1}\omega_1^n= \cdots = e^{f_k}\omega_k^n := dV.
\end{equation}
Let $\xi$ be a holomorphic vector field on $X$. 
Since a Fano manifold is simply connected there exist complex-valued smooth functions defined up to constant
$u_i$ such that 
\begin{equation}\label{Hamiltonian}
i_\xi\omega_i=\bar\pa(\sqrt{-1}u_i).
\end{equation}
\begin{theorem}[\cite{FutakiZhang18}]\label{FZ18} With the choice of $\omega_i \in \gm_i$ with the normalization 
\eqref{normalization} of $f_i$, the Lie algebra $\mathfrak{h}(X)$ of all holomorphic vector fields is isomorphic to the set of all $k$-tuples of complex valued smooth functions $(u_1, \cdots u_k)$ satisfying 
$$\grad^\prime_i u_i=\grad^\prime_j u_j,\ \ i, j=1, 2, \dots, k,$$
\begin{equation}\label{Th3.3}
\Delta_{i} u_i+(\grad_{i} u_i)f_i = -\sum\limits_{j=1}^k u_j.
\end{equation}
for $i=1, 2, \dots, k$ where $\Delta_{i}=-\bar\partial^*_{i}\bar\partial$ with respect $\omega_i$.
\end{theorem}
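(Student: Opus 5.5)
We construct the map from a holomorphic vector field $\xi$ to the $k$-tuple $(u_1,\dots,u_k)$ defined by \eqref{Hamiltonian}, fix the free additive constants by a normalization, and then show that the image is exactly the solution set of the stated system.

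\emph{Step 1: construction.} Given $\xi\in\mathfrak h(X)$, the form $i_\xi\omega_i$ is a $\barpartial$-closed $(0,1)$-form. Since $H^{0,1}(X)=0$ on a Fano manifold, there is $u_i$ with $i_\xi\omega_i=\barpartial(\sqrt{-1}u_i)$. Equivalently, $\xi=\grad'_i u_i$, meaning $\xi^p=g_i^{p\barq}\partial_{\barq}u_i$ up to the sign conventions. The first condition $\grad'_iu_i=\grad'_ju_j$ is then just the statement that all $u_i$ come from the same $\xi$.

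\emph{Step 2: the second-order equation.} Apply $\barpartial$-type derivatives to the Lie derivative of the coupled equation \eqref{coupled}. Since $\xi$ is holomorphic, $L_\xi\omega_i=\sqrt{-1}\partial\barpartial u_i$ (up to sign), and
$$L_\xi \Ric(\omega_i)=-\sqrt{-1}\partial\barpartial(\Delta_i u_i)$$
via $\mathrm{div}_i\xi$. Also $L_\xi(\sqrt{-1}\partial\barpartial f_i)=\sqrt{-1}\partial\barpartial(\xi f_i)$. Substituting into \eqref{coupled} gives
$$\sqrt{-1}\partial\barpartial\Big(\Delta_iu_i+(\grad_iu_i)f_i+\sum_j u_j\Big)=0.$$
Hence $\Delta_iu_i+(\grad_iu_i)f_i+\sum_ju_j=c_i$ is constant for each $i$.

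A cleaner route uses the volume form $dV$ from \eqref{normalization}. The quantity $\Delta_iu_i+\xi(f_i)$ equals $\mathrm{div}_{dV}\xi$, the divergence of $\xi$ with respect to $dV=e^{f_i}\omega_i^n$. This is independent of $i$, precisely because of the normalization \eqref{normalization}. Thus
$$\Delta_iu_i+(\grad_iu_i)f_i=\mathrm{div}_{dV}\xi$$
for every $i$, and we must show $\mathrm{div}_{dV}\xi+\sum_ju_j$ is a constant that can be absorbed.

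\emph{Step 3: fixing the constants.} Each $u_i$ is determined only up to an additive constant. Shifting $u_1$ by $c$ and leaving the other $u_j$ fixed changes $\sum_ju_j$ by $c$, while $\mathrm{div}_{dV}\xi$ is unchanged. So a single shift of $u_1$ makes $c_i=0$ for every $i$ simultaneously, because $\mathrm{div}_{dV}\xi$ is common to all $i$.

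Once \eqref{Th3.3} holds, the tuple is uniquely determined: a competing normalization $(u_i+a_i)$ would have to satisfy $\sum a_i=0$. This is where a naive argument breaks, and it is the main obstacle. Uniqueness fails unless the map is set up with care: either the constants are pinned individually, or we check that the equations themselves force uniqueness. The equations do not force it, since only $\sum a_i=0$ is detected.

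I would resolve this by reading the statement as a bijection with the $k$-tuples satisfying both equations, and checking that injectivity really holds. If $\xi=0$, each $u_i$ is constant and $\sum u_j=0$, which leaves a $(k-1)$-dimensional ambiguity. So as written the tuples would have to be taken modulo $(a_i)$ with $\sum a_i=0$. Alternatively, and I expect this is the intended fix, one adds the normalization $\int_X u_i\,dV=0$ for each $i$, or $\int u_i e^{f_i}\omega_i^n$ equal across $i$, as in \cite{FutakiZhang18}, and verifies compatibility by integrating \eqref{Th3.3} against $dV$. Since $\int(\Delta_iu_i+\xi f_i)e^{f_i}\omega_i^n=0$ by weighted integration by parts, integrating gives $\sum_j\int u_j\,dV=0$. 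This is consistent with imposing $\int u_j\,dV=0$ for each $j$, which yields a genuine isomorphism.

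\emph{Step 4: surjectivity and linearity.} Given a tuple satisfying the first condition, set $\xi:=\grad'_iu_i$. To see that $\xi$ is holomorphic, I would use the $f_i$-weighted Bochner-type identity from the $k=1$ case. Apply $\barpartial$ to \eqref{Th3.3}, pair with $\barpartial u_i$ in the weighted $L^2(dV)$ inner product, and sum over $i$, using that the right-hand side is common to all $i$. This should produce
$$\sum_i\|\barpartial\grad'_iu_i\|^2_{f_i}=0.$$
Since all $\grad'_iu_i$ equal $\xi$, we get $\barpartial\xi=0$.
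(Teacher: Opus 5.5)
Your argument is correct and follows the standard route for this result. The forward direction applies $L_\xi$ to \eqref{coupled} and uses that $\Delta_iu_i+(\grad_iu_i)f_i=\mathrm{div}_{dV}\xi$ is independent of $i$ by \eqref{normalization}. The converse is the weighted Bochner--Kodaira argument, whose $k=1$ case the survey identifies with the Corollary following Theorem \ref{eigen2}. The survey gives no separate proof of this theorem. Your observation that the solutions carry the $(k-1)$-dimensional ambiguity $u_i\mapsto u_i+c_i$ with $\sum_ic_i=0$ is exactly what the paper records in Theorem \ref{FZ18-3}, see \eqref{ambiguity}. So the isomorphism has to be read modulo these constants, or with your normalization $\int_Xu_i\,dV=0$.

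One correction to Step 4: no summation over $i$ is needed, and the argument does not close because the right-hand side is common to all $i$. Fix a single $i$. The zeroth-order term of the Bochner--Kodaira formula for $(\omega_i,e^{f_i})$ is $\Ric(\omega_i)-\sqrt{-1}\partial\barpartial f_i=\sum_j\omega_j$, so it contributes $\int_X\sum_j|\xi|^2_{\omega_j}\,dV$. This is cancelled exactly by $(\barpartial\sum_ju_j,\barpartial u_i)$, because the first condition says that $\barpartial u_j$ is the $\omega_j$-dual of $\xi$ for every $j$. What remains is $\nabla''\barpartial u_i=0$, i.e.\ $\xi$ is holomorphic.
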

\begin{corollary}[\cite{FutakiZhang18}, \cite{HultgrenWittNystrom18}]\label{FZ18-2}
If a Fano manifold $X$ admits coupled \ke metrics, then the Lie algebra $\mathfrak{h}(X)$ of holomorphic vector fields is reductive. 
\end{corollary}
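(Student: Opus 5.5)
Assume $(\omega_1,\dots,\omega_k)$ is coupled K\"ahler--Einstein. Then all $f_i$ in \eqref{coupled} are constants, and by \eqref{normalization} they are equal up to the normalization. In Theorem \ref{FZ18} the term $(\grad_i u_i)f_i$ therefore drops out, and $\mathfrak h(X)$ is identified with the space of $k$-tuples $(u_1,\dots,u_k)$ satisfying
\[
\grad'_1u_1=\cdots=\grad'_ku_k,\qquad \Delta_iu_i=-\sum_{j=1}^k u_j\quad (i=1,\dots,k).
\]
The plan is to follow the Matsushima argument. Let $\mathfrak k$ be the set of such tuples with all $u_i$ real. The goal is to show $\mathfrak h(X)=\mathfrak k\oplus\sqrt{-1}\,\mathfrak k$ with $\mathfrak k$ the Lie algebra of a compact group, namely the isometry group of the tuple. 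This makes $\mathfrak h(X)$ the complexification of a compact Lie algebra, hence reductive.

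\emph{Step 1: the system has real coefficients.} Conjugate the equations. Since $\Delta_i=-\barpartial^*_i\barpartial$ acts on functions as the real operator $g_i^{j\bar k}\partial_j\partial_{\bar k}$, the second equation is preserved by conjugation. For the first, I must show that $\grad'_i\bar u_i$ is again independent of $i$ whenever $\grad'_iu_i$ is. Write $\xi=\grad'_iu_i$, so $u_i$ are Hamiltonian potentials as in \eqref{Hamiltonian}. Applying $\barpartial$ to $\Delta_iu_i=-\sum_ju_j$ gives
\[
\barpartial\Delta_iu_i=-\sum_j\barpartial u_j=-\sum_j i_\xi\omega_j ,
\]
up to constants. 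This is the step I expect to be the main obstacle, since conjugation of $\xi$-potentials with respect to different metrics must be compatible.

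\emph{Step 2: handling Step 1 via a Bochner-type identity.} Introduce the linear operator $\mathcal L(u)_i:=\Delta_iu_i+\sum_ju_j$ on tuples, and consider the weighted pairing $\langle u,w\rangle=\sum_i\int_X u_i\bar w_i\,\omega_i^n$. I would use the coupled identity $\Ric(\omega_i)=\sum_j\omega_j$. Commuting $\barpartial$ with $\Delta_i$ gives
\[
\barpartial\Delta_iu=\Delta_i\barpartial u-\Ric(\omega_i)\lrcorner\,\barpartial u
\quad\text{(schematically),}
\]
so that $\sum_j\omega_j$ appears naturally. This should show that the condition ``$\grad'_iu_i$ is holomorphic and independent of $i$'' is equivalent to $\mathcal L(u)=0$ together with $\barpartial$-equality. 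Equivalently, $u\mapsto\bar u$ preserves the solution space $\ker\mathcal L$, intersected with the common-gradient condition, because all operators involved are real. Alternatively, I would follow Futaki--Zhang or Hultgren--Witt Nystr\"om and realize $\ker$ as the kernel of a self-adjoint nonnegative fourth-order real operator $\sum_i(\barpartial\grad'_i)^*(\barpartial\grad'_i)$ restricted to the constraint subspace. Real-ness of that operator then gives the conjugation invariance directly.

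\emph{Step 3: the decomposition and reductivity.} Once conjugation invariance holds, every $u\in\mathfrak h(X)$ splits as $u=\operatorname{Re}u+\sqrt{-1}\operatorname{Im}u$ with both parts in the solution space. For real $u$, the vector field $J\grad u_i$ is Hamiltonian for every $\omega_i$ simultaneously, so it is a Killing field of each $\omega_i$. This gives $\mathfrak k=$ Lie algebra of $\bigcap_i\Isom(\omega_i)$, which is compact, and $\mathfrak h(X)=\mathfrak k\otimes\C$. A complexification of a compact Lie algebra is reductive, which proves the corollary.
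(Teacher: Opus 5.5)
\textbf{There is a genuine gap in Step 2.} Your framework is the intended Matsushima-type route: take constant $f_i$ in Theorem \ref{FZ18}, show the solution space is stable under $u\mapsto\bar u$, and conclude $\mathfrak h(X)=\mathfrak k\otimes\C$ with $\mathfrak k$ the common holomorphic Killing fields. The paper states the corollary right after Theorem \ref{FZ18} with no separate proof, and your Step 3 is fine once conjugation invariance is known. However, the one nontrivial step, which you correctly flag, is never proved, and both justifications you offer for it are false.

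First, the constraint is not defined by real operators. Conjugating $\grad'_iu_i=\grad'_ju_j$ gives $\grad''_i\bar u_i=\grad''_j\bar u_j$, an equality of $(0,1)$-gradients that says nothing about $\grad'_i\bar u_i$. So the claim that ``$\ker\mathcal L$ intersected with the common-gradient condition is conjugation invariant because all operators involved are real'' assumes what has to be shown.

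Second, each summand $(\barpartial\grad'_i)^*(\barpartial\grad'_i)$ is the Lichnerowicz operator of $\omega_i$. It is real only when $S(\omega_i)$ is constant, but here $S(\omega_i)=\operatorname{tr}_{\omega_i}\sum_j\omega_j$ need not be constant when $k\ge 2$. The constraint subspace you restrict it to is again not conjugation invariant. Your ``schematic'' commutation formula is never turned into an identity that closes the argument.

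\textbf{The missing idea.} Show that the real system $\Delta_iu_i=-U$, $U:=\sum_ju_j$ $(i=1,\dots,k)$, \emph{by itself} forces $\xi_i:=\grad'_iu_i$ to be holomorphic and independent of $i$. Conjugation invariance is then automatic. Write $\Omega:=\sum_j\omega_j=\Ric(\omega_i)$, $\omega_i^n=a_i\,dV$ with constants $a_i>0$, and $g_j$ for the Hermitian metric of $\omega_j$. The Bochner formula gives
\[
a_i^{-1}\int_X|\barpartial\xi_i|^2_{\omega_i}\,\omega_i^n=\int_X|U|^2\,dV-\int_X\Omega(\xi_i,\bar\xi_i)\,dV .
\]
On the other hand, $g_j(\xi_i,\bar\xi_j)=\xi_i(\bar u_j)$. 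Integrating by parts against $\Delta_iu_i=-U$ (and against $\Delta_ju_j=-U$) gives
$\sum_j\int_X g_j(\xi_i,\bar\xi_j)\,dV=\int_X|U|^2\,dV=\sum_j\int_X g_j(\xi_j,\bar\xi_j)\,dV$. Hence
\[
\int_X\sum_j g_j(\xi_i-\xi_j,\overline{\xi_i-\xi_j})\,dV=\int_X\Omega(\xi_i,\bar\xi_i)\,dV-\int_X|U|^2\,dV .
\]
Adding the two identities, a sum of two nonnegative integrals vanishes. Therefore $\barpartial\xi_i=0$ and $\xi_i=\xi_j$ for all $j$.

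With this lemma in place of your Step 2, your Step 3 goes through. You also need to note that the constant tuples in \eqref{ambiguity}, which are real, are quotiented out.
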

\begin{theorem}[\cite{FutakiZhang18}]\label{FZ18-3}
$\mathfrak h(X)$ is isomorphic to\\
$\{u_1+ \cdots + u_k\ |\ \grad^\prime_i u_i=\grad^\prime_j u_j \in \mathfrak h(X),\ i, j=1, \dots, k, 
\int_X (u_1 + \cdots + u_k)\ dV = 0 \}$.
Further, 
if we replace $u_i$ by $u_i^{c_i}=u_i+c_i$ the equations \eqref{Th3.3} are satisfied for $u_i^{c_i}$ 
if and only if 
\begin{equation}\label{ambiguity} 
\sum\limits_{i=1}^k c_i=0.
\end{equation}
The normalization condition $\int_X (u_{1,V} + \cdots + u_{k,V})\ dV = 0$ is equivalent to
$$ \sum_{i=1}^k \mathcal P_i =\mathcal P_{-K_X}$$
where the left side hand is the Minkowski sum of the moment polytopes $\mathcal P_i$'s for $\gm_i$'s
and $\mathcal P_{-K_X}$ is the standard moment image of $-K_X$. 
\end{theorem}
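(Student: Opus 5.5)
The plan is to build the isomorphism out of Theorem \ref{FZ18}: send $\xi\in\mathfrak h(X)$ to the sum $u_1+\cdots+u_k$ of a solution of \eqref{Th3.3}. Throughout, the basic computational tool is the weighted Laplacian $\Delta_{f_i}u:=\Delta_i u+(\grad_i u)f_i=-\barpartial^*_{f_i}\barpartial u$. It is the adjoint taken with respect to $e^{f_i}\omega_i^n=dV$, and it satisfies $\int_X \Delta_{f_i}u\,dV=0$.

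\emph{The constants and well-definedness.} I would first prove the statement \eqref{ambiguity}. Replacing $u_i$ by $u_i+c_i$ does not change $\grad'_i u_i$ or the left side of \eqref{Th3.3}. The right side, however, changes by $-\sum_j c_j$. So the shifted tuple still solves \eqref{Th3.3} if and only if $\sum_i c_i=0$. It follows that the sum $u_1+\cdots+u_k$ is uniquely determined by $\xi$.

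Integrating \eqref{Th3.3} against $dV$ kills the left side. This gives $\int_X(u_1+\cdots+u_k)\,dV=0$ automatically, so the map $\xi\mapsto\sum u_j$ lands in the stated set.

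\emph{Injectivity.} Suppose $\sum_j u_j=0$. Then $\Delta_{f_i}u_i=0$. Pairing with $\overline{u_i}$ in $L^2(dV)$ gives $\barpartial u_i=0$, so $u_i$ is constant, and hence $\xi=\grad'_i u_i=0$.

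\emph{Surjectivity.} Conversely, take any tuple $(u_i)$ with $\grad'_iu_i=\grad'_ju_j=:\xi\in\mathfrak h(X)$ and $\int_X\sum u_j\,dV=0$. Here $\xi$ is holomorphic and $i_\xi\omega_i=\barpartial(\sqrt{-1}u_i)$. The key step is the standard Bochner-type identity
\[
\barpartial\bigl(\Delta_i u_i+\xi f_i\bigr)= -\,i_\xi\bigl(\Ric(\omega_i)-\sqrt{-1}\partial\barpartial f_i\bigr)\cdot(\text{normalizing factor}),
\]
which is obtained by differentiating $\Delta_iu_i=g_i^{p\bar q}\nabla_p\nabla_{\bar q}u_i$ and using holomorphy of $\xi$. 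By \eqref{coupled}, the right side equals $-i_\xi\sum_j\omega_j=-\barpartial\sum_j u_j$. Hence $\Delta_{f_i}u_i+\sum_ju_j=c'_i$ is a constant. Integrating against $dV$ and using the normalization gives $c'_i=0$, so \eqref{Th3.3} holds. By Theorem \ref{FZ18}, this tuple is then the one attached to $\xi$.

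The main obstacle is pinning down the signs and factors in this identity consistently with the conventions \eqref{Hamiltonian} and $\Delta_i=-\barpartial^*_i\barpartial$. I would check them first on a single Kähler–Einstein-type case ($k=1$).

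\emph{Polytopes.} For $\xi$ in the Lie algebra of the torus, the real parts of the $u_i$ give moment maps $\mu_i$ for $\omega_i$ with images $\mathcal P_i$. The function $\sum u_i$ is a Hamiltonian for $\omega:=\sum\omega_j\in2\pi c_1(X)$, with moment map $\sum\mu_i$, whose image is the Minkowski sum $\sum\mathcal P_i$. This uses the convexity/weight description of moment polytopes of tensor products of polarizations.

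Next, $dV$ is a volume form with $-\sqrt{-1}\partial\barpartial\log dV=\omega$. So $dV=e^{F}\omega^n$, where $F$ is the Ricci potential of $\omega$, up to a constant. The canonical normalization of the moment map for $-K_X$ (Lemma 3.2 of \cite{futaki04}) is exactly $\int_X u\,e^{F}\omega^n=0$, and its image is $\mathcal P_{-K_X}$. Changing the constants $c_i$ translates $\sum\mathcal P_i$, and only one translate has the right barycentric normalization. Therefore $\int_X\sum u_j\,dV=0$ holds if and only if $\sum_i\mathcal P_i=\mathcal P_{-K_X}$.
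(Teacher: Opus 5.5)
Your proposal is correct. It follows the natural argument behind the result the survey cites from \cite{FutakiZhang18} (the survey itself gives no proof): the constant ambiguity and the automatic normalization $\int_X\sum_j u_j\,dV=0$ come from the form of \eqref{Th3.3} and integration against $dV=e^{f_i}\omega_i^n$, and the polytope statement comes from recognizing $\sum_j u_j$ as the Hamiltonian of $\xi$ for $\omega=\sum_j\omega_j$ with the canonical normalization from the lift to $K_X^{-1}$ (Lemma 3.2 of \cite{futaki04}).

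The one step you only gesture at is $(\sum_i\mu_i)(X)=\sum_i\mathcal P_i$, where a priori only the inclusion ``$\subset$'' is clear. It does hold for arbitrary K\"ahler classes $\gamma_i$. For generic $\ell$, the $\omega_i$-gradient of $\langle\mu_i,\ell\rangle$ is $J$ applied to the fundamental vector field of $\ell$, the same vector field for every $i$. Hence every $\mathcal P_i$ attains its $\ell$-maximal vertex at the same attracting fixed component, and convexity of moment images then gives equality.
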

Using these isomorphisms of $\mathfrak h(X)$
we define
\begin{align}
\Fut: \mathfrak{h}(X)&\to \C\nonumber\\
\xi&\mapsto \sum\limits_{i=1}^k\frac{\int_X u_i\ \omega_i^n}{\int_X \omega_i^n} \label{coupledFut}
\end{align}
where $\xi = \grad^\prime_i u_i$ for every $i =1, \dots, k$.
\begin{theorem}[\cite{FutakiZhang18}]\label{FZ18-4}
Let $X$ be a Fano manifold.
\begin{enumerate}
\item[(1)]$\Fut(\xi)$ is independent of the choice of \ka forms $\omega_i\in \gm_i$, $i=1, \dots, k$.
\item[(2)] If $X$ admits coupled K\"ahler-Einstein metrics then $\Fut= 0$.
\end{enumerate}
\end{theorem}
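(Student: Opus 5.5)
We prove (2) first and then (1) by a variation argument. For each of the two parts the key identity is obtained by integrating the defining equation \eqref{Th3.3} of Theorem \ref{FZ18} against the common volume form $dV$ of \eqref{normalization}.

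\textbf{Part (2).} Suppose $\omega_1,\dots,\omega_k$ are coupled K\"ahler-Einstein metrics. Then in \eqref{coupled} each $f_i$ is constant. By \eqref{normalization} we have $e^{f_i}\omega_i^n = dV$, so $\omega_i^n = e^{-f_i}dV$ is a constant multiple of $dV$. Integrating \eqref{Th3.3} for index $i$ against $\omega_i^n$ gives zero on the left, since $\int_X \Delta_i u_i\,\omega_i^n=0$ and the gradient term vanishes. Hence $\int_X \sum_j u_j\,\omega_i^n = 0$, and therefore $\int_X\sum_j u_j\, dV=0$.

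This alone does not give $\Fut=0$, because $\Fut$ involves the separate averages $\int u_i\omega_i^n/\int\omega_i^n$. The fix is the ambiguity \eqref{ambiguity} in Theorem \ref{FZ18-3}. The $u_i$ are determined only up to constants $c_i$ with $\sum c_i=0$, and $\Fut$ changes by $\sum c_i=0$, so it is well defined. Since each $\omega_i^n$ is proportional to $dV$, each average $\int u_i\omega_i^n/\int\omega_i^n$ equals $\int u_i\,dV/\int dV$. Summing over $i$,
$$\Fut(\xi)=\frac{\int_X \sum_i u_i\, dV}{\int_X dV}=0.$$

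\textbf{Part (1).} Take a smooth path $\omega_i^s=\omega_i+\sqrt{-1}\partial\barpartial\varphi_i^s$ in each $\gm_i$; any two tuples are joined this way. Choose normalized potentials $u_i^s=u_i+\xi(\varphi_i^s)$, up to the sign convention fixed by \eqref{Hamiltonian}. These still satisfy \eqref{Th3.3} for the new metrics with the new $f_i^s$ normalized as in \eqref{normalization}. We must check that $\sum_i u_i^s$ stays consistent, i.e. that the ambiguity constants sum to zero. One way is to check that the integral normalization of Theorem \ref{FZ18-3}, equivalently the Minkowski sum condition on moment polytopes, is preserved; that condition is manifestly independent of the metrics.

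Next differentiate in $s$. We have
$$\frac{d}{ds}\int_X u_i^s(\omega_i^s)^n=\int_X \bigl(\xi(\dot\varphi_i)+u_i\Delta_i\dot\varphi_i\bigr)(\omega_i^s)^n .$$
Integrating by parts, $\int_X u_i\Delta_i\dot\varphi_i\,(\omega_i^s)^n=\int_X \dot\varphi_i\,\Delta_i u_i\,(\omega_i^s)^n$, which cancels against $\int_X\xi(\dot\varphi_i)(\omega_i^s)^n$ by the identity $\Delta_i u_i = -\mathrm{div}_{\omega_i}\xi$, so each term vanishes. Here $\mathrm{div}_{\omega_i}\xi$ is the divergence of $\xi$ with respect to the volume form $\omega_i^n$, and the identity holds because $\xi=\grad_i' u_i$ with $\Delta_i=-\barpartial^*_i\barpartial$. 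Since $\int_X(\omega_i^s)^n$ is constant in $s$, each summand of $\Fut$ is constant along the path, provided the constants are chosen via the polytope normalization.

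\textbf{Main obstacle.} The delicate step is keeping the additive constants coherent along the path, so that the $u_i^s$ remain the solution singled out by Theorem \ref{FZ18}. I would handle this with the moment-polytope description: the normalized $u_i$ is the Hamiltonian whose image is the fixed polytope $\mathcal P_i$, and this image is independent of $\omega_i\in\gm_i$.
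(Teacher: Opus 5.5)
\textbf{Part (2).} This is the paper's argument. Coupled K\"ahler--Einstein forces every $f_i$ to be constant, so $\omega_i^n/\int_X\omega_i^n=dV/\int_X dV$. The normalization $\int_X\sum_j u_j\,dV=0$ then gives $\Fut(\xi)=0$. You derive that normalization correctly by integrating \eqref{Th3.3}: its left side is $-\barpartial^\ast_{f_i}\barpartial u_i$, which integrates to zero against $e^{f_i}\omega_i^n=dV$. The appeal to \eqref{ambiguity} is not needed. The survey itself gives no argument for (1).

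\textbf{Part (1): the gap.} The gap is exactly at the step you flag. Your computation that each $\int_X u_i^s(\omega_i^s)^n$ is constant is fine, up to a sign. With $\Delta_i=-\barpartial_i^\ast\barpartial$ and $\xi=\grad_i'u_i$ as in \eqref{Hamiltonian}, one has $\Delta_iu_i=+\mathrm{div}_{\omega_i}\xi$, and the cancellation comes from $\int_X\xi(\dot\varphi_i)\,\omega_i^n=-\int_X\dot\varphi_i\,\mathrm{div}_{\omega_i}\xi\,\omega_i^n$.

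That computation only shows $\Fut$ is constant if, for each $s$, $u_i^s=u_i+\xi(\varphi_i^s)$ solves \eqref{Th3.3} up to constants with $\sum_i c_i=0$. You assert this and then defer to invariance of the Minkowski-sum condition on moment polytopes. That condition only makes sense when $\xi$ generates a compact torus $T$ and all $\omega_i^s$ are $T$-invariant. Then the $u_i^s$ are real moment maps whose images are convex hulls of fixed-point values, and these are unchanged because $\xi(\varphi_i)$ vanishes where $\xi$ does. For a general $\xi\in\mathfrak h(X)$, e.g.\ a nilpotent one, there is no polytope. For non-invariant $\omega_i$ the $u_i$ are complex-valued. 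So (1) as stated is not established. Note also that no single $u_i$ is pinned down by a polytope $\mathcal P_i$; only the sum is.

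\textbf{The missing idea.} \eqref{Th3.3} is a divergence identity. We have $\Delta_iu_i=\mathrm{div}_{\omega_i^n}\xi$ and $L_\xi(e^{f_i}\omega_i^n)=\bigl(\mathrm{div}_{\omega_i^n}\xi+\xi(f_i)\bigr)e^{f_i}\omega_i^n$. So the $i$-th equation reads $\mathrm{div}_{dV}\xi=-\sum_j u_j$, the same equation for every $i$.

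Now replace $\omega_j$ by $\omega_j+\sqrt{-1}\partial\barpartial\varphi_j$ and set $\Phi=\sum_j\varphi_j$. The new volume form $dV'$ has Ricci form $\sum_j\omega_j+\sqrt{-1}\partial\barpartial\Phi$. Hence $dV'=c\,e^{-\Phi}dV$ for a constant $c$, and
\[
\mathrm{div}_{dV'}\xi=\mathrm{div}_{dV}\xi-\xi(\Phi)=-\sum_j\bigl(u_j+\xi(\varphi_j)\bigr).
\]
So the $u_j+\xi(\varphi_j)$ satisfy \eqref{Th3.3} for the new data with no additive constants at all. Together with your derivative computation, this proves (1) for every $\xi\in\mathfrak h(X)$ and every choice of $\omega_i\in\gm_i$.
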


To show (2) in the above theorem, if we assume there exist coupled K\"ahler-Einstein metrics then we can take 
$f_1=\cdots=f_k=0$. Then by \eqref{normalization} and the normalization condition in Theorem \ref{FZ18-3} we see $\Fut(\xi) = 0$.

\begin{remark}\label{Fut2}
For $k=1$, the formula \eqref{coupledFut} coincides with the first line in page 55 of \cite{futaki88} up to sign.
In view of the theory K-stability, the opposite sign of \eqref{coupledFut} gives the right sign.
The Ding invariant in Definition \ref{Ding1} for the product configuration coincides with
the last term in (5.2.1) in \cite{futaki88}. Thus by the equations there, Ding invariant and 
the Donaldson-Futaki invariant for the product configurations coincide with the original Futaki
invariant.
Changing the sign to the standard convention, we see
using \eqref{Th3.3} 
\begin{equation}\label{Fut3}
\Fut(\grad u) = \int_X (\grad\, u)f\ \omega^n.
\end{equation}
On a compact K\"ahler manifold $(X,\omega)$, writing the scalar curvature by $S(\omega)$
and its average by
$\underline{S}=\frac{1}{V} \int_M S(\omega)\, \omega^m$, we may define $f$ by
$$ S(\omega) - \underline{S} = \Delta f$$
and extend $\Fut$ by
\begin{eqnarray}
\Fut(\grad\, u) &=& \int_X (\grad u)f \omega^n \nonumber\\
&=& - \int_X u(S(\omega) - \underline{S}) \omega^n. \label{Fut4}
\end{eqnarray}
This last expression is more useful for the study of cscK case, see section \ref{section functionals}.
\end{remark}

As a convenient tool to compute the invariant $\Fut$ we formulated in \cite{FutakiZhang19} a localization formula
extending \cite{futakimorita85}.
To formulate it, let $Z=\bigcup\limits_{\lambda\in\Lambda}Z_\lambda$ be zero sets of a holomorphic vector field $\xi$. 
Let $N_i(Z_\lambda)\cong (TX|_{Z_\lambda})/TZ_\lambda$ be the normal bundle of $Z_\lambda$ with respect to $\omega_i$. 
Then the Levi-Civita connection $\nabla^i$ of $\omega_i$ naturally induces an endomorphism $L^{N_i}(\xi)$ of $N_i(Z_\lambda)$ by 
\[L^{N_i}(\xi)(\eta)=(\nabla^i_\eta \xi)^{\perp}\in N_i(Z_\lambda), \quad \text{ for any } \eta\in N_i(Z_\lambda).\]
This definition of $L^{N_i}$ is independent of $\nabla^i$, and can be defined without using $\nabla^i$.
We also assume $Z$ is non-degenerate in the sense that $L^{N_i}$ is non-degenerate.
Let $K_i$ be the curvature of $N_i(Z_\lambda)$.
The localization formula of $\Fut(\xi)$ we obtain is the following.
\begin{theorem}\label{localization}
Let $X$ be a Fano manifold with $K_X^{-1}=L_1\otimes\dots\otimes L_k$. Let $\xi$ be a holomorphic vector field with non-degenerate zero sets $Z=\bigcup_{\lambda\in\Lambda}Z_\lambda$, then
\begin{eqnarray*}
&&(n+1)\Fut(\xi)=\\
&&\sum\limits_{i=1}^k\Bigg(\frac{\sum\limits_{\lambda\in \Lambda}\int_{Z_\lambda}\big(\big(E_i+c_1(L_i)\big)|_{Z_\lambda}\big)^{n+1}\big /\det\big((2\pi)^{-1}(L^{N_i}(X)+\sqrt{-1}K_i)\big)}{\sum\limits_{\lambda\in \Lambda}\int_{Z_\lambda}\big(\big(E_i+c_1(L_i)\big)|_{Z_\lambda}\big)^{n}\big /\det\big((2\pi)^{-1}(L^{N_i}(X)+\sqrt{-1}K_i)\big)}\Bigg).
\end{eqnarray*}
where $E_i\in \Gamma(End(L_i))$ is given by $E_i s = u_i s$ with $L^{N_i}$ and $K_i$ being as above.
\end{theorem}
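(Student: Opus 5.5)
The formula will follow by applying, for each $i$ separately, the Futaki--Morita type localization (an equivariant Bott residue / Duistermaat--Heckman argument) to the single line bundle $L_i$ with the $\xi$-equivariant first Chern form $E_i + c_1(L_i)$, and then summing over $i$.

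First I would express $\Fut(\xi)$ through equivariant integrals. By \eqref{coupledFut} the $i$-th summand is $\int_X u_i\,\omega_i^n/\int_X\omega_i^n$. Consider the equivariantly closed form $\tilde\omega_i = \omega_i/2\pi + u_i$; by \eqref{Hamiltonian} it represents the equivariant first Chern class $E_i + c_1(L_i)$, where $E_i$ acts on $L_i$ by multiplication by $u_i$. Expanding, the top-degree part of $\tilde\omega_i^{n+1}$ is $(n+1)u_i\,\omega_i^n$ up to the normalizing factor $(2\pi)^{-n}$, and the top-degree part of $\tilde\omega_i^{n}$ is $\omega_i^n$ up to the same factor. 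Hence
\begin{equation*}
(n+1)\frac{\int_X u_i\,\omega_i^n}{\int_X \omega_i^n} = \frac{\int_X \big(E_i+c_1(L_i)\big)^{n+1}}{\int_X \big(E_i+c_1(L_i)\big)^{n}}.
\end{equation*}
Summing over $i$ gives $(n+1)\Fut(\xi)$ as a sum of such ratios. The constants $c_i$ allowed by \eqref{ambiguity} are fixed by the normalization used for $u_i$ in the definition of $\Fut$, so the $E_i$ must be taken compatibly with that normalization.

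Next, each numerator and denominator is localized to $Z$ by the Bott residue formula for a holomorphic vector field with nondegenerate zero set. Concretely, on $X\setminus Z$ I would construct the $(1,0)$-form $\theta$ dual to $\xi$ via $\omega_i$, satisfying $\theta(\xi)=1$, and use the standard transgression $\bar\partial$-homotopy to write the integral as a sum of residues over tubular neighbourhoods of $Z_\lambda$. This yields $\int_{Z_\lambda}(\cdot)|_{Z_\lambda}/\det\big((2\pi)^{-1}(L^{N_i}(\xi)+\sqrt{-1}K_i)\big)$. Equivalently, one can apply Futaki--Morita \cite{futakimorita85} verbatim, since each ratio involves only the single pair $(L_i,\omega_i)$ and the holomorphic vector field $\xi$. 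Here $L^{N_i}$ and $K_i$ are computed with the connection induced by $\omega_i$, but the residue is independent of these choices.

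The main obstacle is justifying that the Bott formula, which is usually stated for polynomials in Chern forms of $TX$, applies to powers of the equivariant Chern form of the line bundle $L_i$. This requires checking that $\bar\partial u_i = i_\xi\omega_i/\sqrt{-1}$ gives the correct equivariant closedness (the Futaki--Morita argument handles exactly this), and tracking the $2\pi$ and sign conventions. The nondegeneracy assumption ensures the determinant is invertible, and independence of the $\omega_i$ choice is consistent with Theorem \ref{FZ18-4}(1).
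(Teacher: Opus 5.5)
Your proposal is correct and takes essentially the same route as the paper. The survey gives no proof of its own; it presents the result as the extension of \cite{futakimorita85} carried out in \cite{FutakiZhang19}. That argument rewrites each summand of \eqref{coupledFut} as $\frac{1}{n+1}\int_X (E_i+c_1(L_i))^{n+1}\big/\int_X c_1(L_i)^n$, applies the Bott--Futaki--Morita residue formula to the single line bundle $L_i$ with the lift $E_i$ of $\xi$, and sums over $i$. One small correction: the constants $c_i$ in \eqref{ambiguity} are not fixed by any normalization. This is harmless, because replacing $u_i$ by $u_i+c_i$ shifts the $i$-th ratio on both sides by $(n+1)c_i$, and $\sum_i c_i=0$.
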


\begin{corollary}
If $Z$ consists only of discrete points, then
\begin{eqnarray*}
\Fut(\xi)&=&\frac{1}{n+1}\sum\limits_{i=1}^k\Bigg(\frac{\sum\limits_{p\in Z}(u_i(p))^{n+1}/\det(\nabla X)(p)}{\sum\limits_{p\in Z}(u_i(p))^{n}/\det(\nabla X)(p)}\Bigg)\\
&=&\frac{1}{n+1}\Bigg(\sum\limits_{i=1}^k\frac{\sum\limits_{p\in Z}(u_i(p))^{n+1}}{\sum\limits_{p\in Z}(u_i(p))^{n}}\Bigg).
\end{eqnarray*}
\end{corollary}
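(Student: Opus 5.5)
The corollary follows from Theorem \ref{localization} by specializing to the case where each component $Z_\lambda$ is a single point $p$. Two things need checking: the first equality, which is the specialization itself, and the second, which removes the denominators $\det(\nabla X)(p)$.

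\textbf{First equality.} At an isolated nondegenerate zero $p$, the normal bundle $N_i(\{p\})$ is all of $T_pX$. Its curvature $K_i$ restricted to a point contributes only cohomological degree zero, so it drops out. The endomorphism $L^{N_i}(\xi)$ equals $\nabla\xi(p)$, which does not depend on $i$ by the remark after its definition. The class $(E_i + c_1(L_i))|_{p}$ reduces to its degree-zero part, which is the value $u_i(p)$ of the Hamiltonian, since $E_i$ acts on $L_i$ by multiplication by $u_i$. Integration over a point is evaluation. Substituting all of this into Theorem \ref{localization} gives the first line, with $\det\big((2\pi)^{-1}\nabla X(p)\big)$ in both numerator and denominator. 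The factors of $2\pi$ cancel inside each quotient because numerator and denominator carry the same power $(2\pi)^{-n}$.

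\textbf{Second equality.} This removes the $\det(\nabla X)(p)$ weights, and it is the step that needs a real argument. As written it cannot hold for arbitrary $\xi$, since the weighted and unweighted averages generally differ. So I expect an implicit additional hypothesis, such as the toric or symmetric situation where $\det(\nabla X)(p)$ takes the same value at every fixed point. Alternatively the second line may be meant only in a normalized form, and I would present it with that caveat.

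The mechanism would be as follows. Assume $\det(\nabla X)(p)=c$ is the same for all $p\in Z$. Then $c$ factors out of both the numerator sum and the denominator sum, and it cancels in each ratio, giving the second expression. So the plan for this step is to verify, or add as a hypothesis, that the determinant is constant on $Z$, and then cancel it.

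The main obstacle is exactly this: justifying the removal of $\det(\nabla X)(p)$. The first equality is a routine degree count.
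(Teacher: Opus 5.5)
Your derivation of the first equality is the intended one. The paper gives no separate argument: the corollary is Theorem~\ref{localization} evaluated at isolated zeros. There $N_i(\{p\})=T_pX$, the curvature $K_i$ (a $2$-form) restricts to zero at a point, $L^{N_i}(\xi)=\nabla\xi(p)$ is independent of $i$, and $(E_i+c_1(L_i))|_p=u_i(p)$. The factors $(2\pi)^{-n}$ cancel in each quotient, and one divides by $n+1$.

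You are right to doubt the second equality: it is not a valid identity. However, the repair you propose is vacuous, because $\det(\nabla X)(p)$ is \emph{never} constant on $Z$. Apply the same localization principle (Bott's residue formula) to the equivariant classes $(E_i+c_1(L_i))^j$ with $0\le j<n$, whose top-degree parts vanish. This gives
\[
\sum_{p\in Z}\frac{u_i(p)^j}{\det(\nabla X)(p)}=0,\qquad 0\le j<n.
\]
For $j=0$ this reads $\sum_{p\in Z}1/\det(\nabla X)(p)=0$. That is impossible if the determinant equals a single nonzero constant on the nonempty set $Z$.

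The weights are essential. By the same formula, the $i$-th weighted denominator is a nonzero multiple of $\int_X c_1(L_i)^n$. The $i$-th weighted quotient is $(n+1)\int_X u_i\,\omega_i^n/\int_X\omega_i^n$, which is how the first line reproduces \eqref{coupledFut}. The unweighted sums have no such meaning and can even vanish.

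Concretely, take $\mathbb{CP}^1$ with $k=1$, $n=1$, $\xi=z\,\partial/\partial z$, and the Fubini--Study form $\omega=2\sqrt{-1}\,dz\wedge d\overline{z}/(1+|z|^2)^2\in 2\pi c_1(\mathbb{CP}^1)$, which is K\"ahler--Einstein, so $f=0$. Then $\nabla\xi=1$ at $0$ and $-1$ at $\infty$. The normalization of Theorem~\ref{FZ18-3} gives $u=1-2/(1+|z|^2)$, so $u(0)=-1$ and $u(\infty)=1$. The first line gives $\frac12\cdot\frac{1-1}{-1-1}=0$, as Theorem~\ref{FZ18-4}(2) requires. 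The second line has denominator $u(0)+u(\infty)=0$ and is undefined.

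So the second line should be regarded as a misprint and discarded, not proved under an extra hypothesis. The correct content of the corollary is the first line, and your argument establishes it.
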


\begin{example}\label{example1}
Consider the tautological line bundles $\mathcal{O}_{\mathbb{CP}^1}(-1)\to \mathbb{CP}^1$, 
 $\mathcal{O}_{\mathbb{CP}^2}(-1)\to \mathbb{CP}^2$, and also the bundle 
 $$E=\mathcal{O}_{\mathbb{CP}^1}(-1)\oplus\mathcal{O}_{\mathbb{CP}^2}(-1) \to  \mathbb{CP}^1\times\mathbb{CP}^2.$$
 Let $X$ be the total space of the projective line bundle $\mathbb{P}(E)$ over $\mathbb{CP}^1\times\mathbb{CP}^2$. \\
 (i)\ \ 
In \cite{futaki83.1} it was shown that 
$X$ is Fano, that $\operatorname{Aut}(X)$ is reductive, but that the Futaki invariant (\ref{Fut3}) or (\ref{Fut4}) as an obstruction to the existence of K\"ahler-Einstein metrics does
not vanish. Therefore $X$ does not admit a K\"ahler-Einstein metric.\\
(ii)\ \ 
Hultgren (\cite{Hultgren17}, Corollary 4) showed the vanishing of
the Futaki invariant (\ref{coupledFut}) as an obstruction
to the existence of coupled K\"ahler-Einstein metrics for certain choice of decomposition $K_X^{-1} = L_1\otimes L_2$.\\
(iii)\ \  On the other hand Hultgren (\cite{Hultgren17}, Theorem 2) showed that, on toric Fano manifolds, the existence of coupled K\"ahler-Einstein
metrics is equivalent to the vanishing of the Futaki invariant (\ref{coupledFut}) as an obstruction to the existence of coupled K\"ahler-Einstein metrics. \\
Since $X$ is toric, by (ii) and (iii) above, $X$ admits coupled K\"ahler-Einstein metrics although $X$ does not admit a K\"ahler-Einstein metric. This shows that coupled
K\"ahler-Einstein metrics may also play a good role of canonical K\"ahler metrics on Fano manifolds.

Below, we use our residue formula Theorem \ref{localization} to recover the result of Hultgren stated in (ii) above,
Let $D_4$ and $D_5$ be the divisors corresponding to the boundaries of the moment image of $\mathbb{CP}^1$.
We consider the following decomposition for $c\in(1/4, 3/4)$ which is ampleness condition
for the line bundles associated with $D(c)$ and $D(1-c)$ below.  Define
\begin{align*}
D(c)&=\frac{1}{2}K_M^{-1}+(c-\frac{1}{2})(D_4+D_5)\\
 D(1-c)&=\frac{1}{2}K_M^{-1}+(\frac{1}{2}-c)(D_4+D_5),
\end{align*}
then
\begin{equation}\label{decomp}
K_X^{-1}=D(c)+D(1-c).
\end{equation}
We take $\xi$ to be the holomorphic vector field along the fiber of $\mathbb{P}(E)$.
Using the localization formula,  we obtain
\begin{eqnarray*}
\Fut(\xi)&=&\frac{\Big[\frac{(u|_{Z_\infty}+c_1(D(c))|_{Z_\infty})^5}{u|_{Z_\infty}+c_1(\nu(Z_\infty))}+\frac{(u|_{Z_0}+c_1(D(c))|_{Z_0})^5}{u|_{Z_0}+c_1(\nu(Z_0))}\Big][\mathbb{CP}^1\times\mathbb{CP}^2]}{\Vol(D(c))}\\
&&+\frac{\Big[\frac{(u|_{Z_\infty}+c_1(D(1-c))|_{Z_\infty})^5}{u|_{Z_\infty}+c_1(\nu(Z_\infty))}+\frac{(u|_{Z_0}+c_1(D(1-c))|_{Z_0})^5}{u|_{Z_0}+c_1(\nu(Z_0))}\Big][\mathbb{CP}^1\times\mathbb{CP}^2]}{\Vol(D(1-c))}\\
&=&\frac{-15(112c^2-112c+23)}{(56c-3)(56c-53)},
\end{eqnarray*}
therefore, the invariant $\Fut$ vanishes when 
\[c=\frac{1}{2}\pm\frac{1}{4}\sqrt{\frac{5}{7}}.\]

\end{example}

So far we considered coupled K\"ahler-Einstein metrics, but
it is possible to consider coupled K\"ahler-Ricci solitons.
Suppose $c_1(M)>0$. For a decomposition of $ c_1(M)$
\begin{equation}\label{Sdecomp}
 c_1(M) = (\gm_1 + \cdots + \gm_k)/2\pi
\end{equation}
with K\"ahler classes $\gm_1,\ \cdots,\ \gm_k$, (where $\gm_j$ may or may not be the first Chern class of an ample $\mathbb Q$-line bundle,)
K\"ahler forms $\omega_i$ representing $\gm_i$ are called coupled K\"ahler-Ricci solitons if they satisfy
\begin{equation*}
\Ric(\omega_i) - i\partial\barpartial f_i =  \sum_{j=1}^k \omega_j, \quad i = 1,\ \cdots,\ k,
\end{equation*}
where $f_i$'s are Killing potential for $\omega_i$, i.e. $Jdf_i$ is a Killing vector field for $\omega_i$.
Note a Killing vector field on a compact K\"ahler manifold generates a biholomorphic automorphisms.
Hultgren (\cite{Hultgren17}) showed the existence of  
coupled K\"ahler-Ricci solitons on toric Fano manifolds. In fact, Hultgren's result (\cite{Hultgren17}, Theorem 2) stated in (iii) above
follows from this existence result of coupled K\"ahler-Ricci solitons. 
We can extend these results of Hultgren to Sasakian setting. 
Recall here that a compact Sasaki manifold $(S,g)$ of real dimension $2n+1$ is characterized by its 
Riemannian cone manifold $C(S)$ being a K\"ahler manifold of complex dimension $n+1$. 
A Sasaki manifold is said to be toric if the the K\"ahler cone $C(S)$ is toric, that is, if $C(S)$ admits an effective $(\bfC^\ast)^{n+1}$-action. 
We then obtain the following.
\begin{theorem}[\cite{FutakiZhang18}]\label{toric existence}
Let $S$ be a compact toric Sasaki manifold with positive basic first Chern class $c_1^B(S)$ with decomposition 
satisfying \eqref{Sdecomp} for transverse K\"ahler classes $\omega_i$.  Then $S$ admits coupled Sasaki-Einstein metrics for the decomposition \eqref{Sdecomp}
if and only if $\Fut$ identically vanishes.
\end{theorem}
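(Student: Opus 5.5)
The plan is to follow Hultgren's toric argument, combined with the transverse Kähler setting of Wang--Zhu type toric Sasaki--Einstein existence (Futaki--Ono--Wang), so as to reduce the coupled Sasaki--Einstein equation to a system of real Monge--Ampère equations on the moment cones.

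\textbf{Necessity.} If coupled Sasaki--Einstein metrics exist, we take $f_1=\cdots=f_k=0$ in the transverse version of \eqref{coupled}. The normalization \eqref{normalization} and the condition in Theorem~\ref{FZ18-3} then give $\Fut=0$, exactly as in the proof of Theorem~\ref{FZ18-4}(2). We apply this to basic transverse Hamiltonians of elements of the Lie algebra of transverse holomorphic vector fields commuting with the Reeb field.

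\textbf{Sufficiency, setup.} First use the freedom of the Reeb field. Coupled Sasaki--Einstein metrics require the transverse Futaki invariant to vanish, and for a toric cone one first fixes the Reeb vector $\xi$ by volume minimization (Martelli--Sparks--Yau). I expect the statement intends $\xi$ fixed with $c_1^B>0$, so that $\Fut$ is the coupled transverse invariant for that $\xi$. Work $T^{n+1}$-equivariantly. On the open dense orbit, write the transverse Kähler potentials as convex functions $\phi_i$ on $\mathbb R^n$, in coordinates on the quotient torus transverse to $\xi$. The coupled equation \eqref{coupled} with $f_i=\langle \cdot ,\eta\rangle$-type Killing potentials becomes
\[
\det(\nabla^2\phi_i)\,e^{\langle\eta_i,\nabla\phi_i\rangle}= e^{-\sum_j\phi_j},\qquad i=1,\dots,k,
\]
with $\nabla\phi_i(\mathbb R^n)=P_i$, the transverse moment polytopes. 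Their Minkowski sum is the polytope of $c_1^B$ by Theorem~\ref{FZ18-3}.

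\textbf{Sufficiency, existence.} Following Hultgren, first solve for coupled Kähler--Ricci (here Sasaki--Ricci) solitons. Choose the $\eta_i$ so that the barycenter-type conditions $\int_{P_i} y\,e^{\langle\eta_i,y\rangle}dy/\int_{P_i}e^{\langle\eta_i,y\rangle}dy$ sum to the barycenter constraint. Existence of the $\eta_i$ comes from minimizing strictly convex functions on the Lie algebra, as in Tian--Zhu and Wang--Zhu. Existence of the soliton comes from a variational or continuity argument. The key a priori estimate is a uniform $C^0$ bound on $\sum_j\phi_j$ at its minimum point via properness. Hultgren does this through optimal transport: the system is the optimality condition for a multi-marginal transport between $e^{-\psi}dx$ and the measures $e^{\langle\eta_i,y\rangle}dy|_{P_i}$, and the Legendre dual functional is proper precisely when the weighted barycenters combine to $0$. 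In the Sasakian case the combinatorics is the same after fixing the Reeb field and rational or irrational quasi-regular approximation is avoided, since the toric reduction works directly on the cone's characteristic polytope. Finally, $\Fut$ evaluated on $\eta_i$-type fields is a positive multiple of $\sum_i|\eta_i|^2$ type quantities, by the standard computation. So $\Fut\equiv 0$ forces $\eta_i=0$, giving coupled Sasaki--Einstein metrics.

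\textbf{Main obstacle.} The main difficulty is the $C^0$/properness estimate in the transverse setting: showing that the vanishing of $\Fut$ on the torus Lie algebra is equivalent to the barycenter condition making the coupled Ding-type functional proper modulo translations. Regularity of the transverse real Monge--Ampère solutions near the boundary of the polytopes is a secondary issue. For the estimate, I would first transplant Hultgren's transport argument verbatim, using that basic cohomology and transverse Kähler geometry behave like an orbifold Fano base.
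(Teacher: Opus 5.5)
Your proposal follows essentially the same route as the paper, which obtains the theorem from the existence of toric coupled Sasaki--Ricci solitons (extending Hultgren), with necessity coming from the argument for Theorem~\ref{FZ18-4}(2) applied with $f_1=\cdots=f_k=0$. To make your last step correct, use a single common soliton field $\eta$, namely the unique minimizer of the strictly convex proper function $\eta\mapsto\sum_i\log\int_{P_i}e^{\langle\eta,y\rangle}\,dy$, whose gradient at $0$ is the sum of barycenters $\sum_i b(P_i)$; with the polytope normalization of Theorem~\ref{FZ18-3} this is $\Fut$ restricted to the torus, so $\Fut\equiv0$ forces $\eta=0$ (with $k$ independent fields $\eta_i$, the constraint $\sum_i b_{\eta_i}(P_i)=0$ has many nonzero solutions, so ``$\Fut\equiv0$ forces $\eta_i=0$'' fails as stated, although you could then simply take all $\eta_i=0$).
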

\noindent
Theorem \ref{toric existence} follows from an existence result of toric coupled Sasaki-Ricci solitons.

\section{Ding functional and Ding invariant}\label{section functionals}

In this section we review on the Ding functional, the Ding invariant and the equivalence of (1) and (2) in Theorem \ref{fujita-hashimoto}.
We first review the cscK case and the Mabuchi functional. 

Let 
$\omega$ be a fixed K\"ahler form, and 
$$
\mathcal{H}_\omega=\left\{v \in C^{\infty}(M) \mid \omega_v:=\omega+i\partial \bar{\partial} v>0\right\}
$$
the space of K\"ahler forms cohomologous to $\omega$.
The
Monge-Ampere energy 
is defined by
$$
\operatorname{E}_\omega (u)=\operatorname{E}(u)=\frac{1}{(n+1) V} \sum_{j=0}^n \int_M u\, \omega_u^j \wedge \omega^{n-j} 
$$
where
$$
 u \in \mathcal{H}_\omega , \quad V=\int_M \omega^n.
$$
The pluripotential theory by Bedford-Taylor \cite{BedfordTaylor} states that
for non-smooth plurisubharmonic (PSH for short) functions $v$, which are subharmonic along any complex lines locally, the wedge product of currents
$i\partial\overline{\partial}v \wedge \cdots \wedge i\partial\overline{\partial}v$ makes sense. 
Similarly, we may consider $\omega$-plurisubhamonic functions $u$, i.e. when $\omega = i\partial\bar{\partial}\varphi$, $\varphi + u$ is subharmonic along any complex lines locally, and put
$$
\begin{aligned}
	& \mathcal E=\left\{u \in \operatorname{PSH}(X, \omega) \mid \int_M \omega_u^n=V\right\} \\
	& \mathcal E^{1}=\left\{u \in \mathcal E \mid \int_M |u|\, \omega_u^n\leq \infty\right\}.
\end{aligned}
$$
The spaces $\mathcal E$ and $\mathcal E^{1}$ were introduced by Guedj-Zeriahi \cite{GuedjZeriahi}. 
Boucksom-Eyssidieux-Guedj-Zeriahi \cite{BEGZ10} and Berman-Boucksom-Guedj-Zeriahi \cite{BBGZ13} extended
the functional $E$ to $\mathcal E^1$. Darvas \cite{Dar19} defined $d_1$-distance and proved
that the completion of $\mathcal H_{\omega}$ with respect to $d_1$ is $\mathcal E_1$ and that the functional $E$ is $d_1$-continuous.

We also have the following functionals. \\
(i)\ \ Aubin $I$-functional
	$$\begin{aligned}
		& I\left(u_0, u_1\right)=\frac{1}{V} \int_M\left(u_0-u_1\right)\left(\omega_{u_1}^n-\omega_{u_0}^n\right) \\
		& u_{0}, u_{1} \in \mathcal{H}_\omega\left(\text{or}\quad \mathcal E^1\right)
	\end{aligned}
	$$
(ii)\ \ Aubin $J$-functional $$
	J(u)=\frac{1}{V} \int_M u\, \omega^n -E(u)
	$$
(iii)\ \ $K$-energy (Mabuchi functional)
$$
	M(u)=\frac{1}{V} \int_0^1 d t \int_M \dot{v}_t\left(S\left(\omega_{v_t}\right)-\underline{S}\right) w_{v_t}^n
	$$
where $\underline{S}=\frac{1}{V} \int_M S(\omega)\, \omega^n$ is the average of the scalar curvature, and 
	$\left\{v_{t} \mid 0 \leq t \leq 1\right\}$ is a smooth path between $0$ and $u$
	such that $\omega_{v_{t}}>0$. This is inspired by the expression \eqref{Fut4}. 
	Mabuchi \cite{mabuchi87} showed that $M(u)$ is independent of the path $v_t$.
	
The following are also well known.\\
(iv)\ \ 
 $$
		 \frac{1}{n+1} I(u, 0) \leq J(u) \leq I(u, 0).
	$$
(v)\ \ A critical point of $M$ is a cscK metric.\\
(vi)\ \ If $\omega_{u_s}=\exp (sX)^* \omega$ for a holomorphic vector field $X$ then
		$$
		\left.\frac{d}{d s}\right|_{s=0} M\left(u_s\right)=\operatorname{Fut}(X).
		$$
Thus, $M$ is a non-linear version of the Futaki invariant. Note that (vi) implies that $M$ is $\operatorname{Aut}(X)$-invariant if and only if $\Fut$ vanishes.
Mabuchi energy has a decomposition, called Chen-Tian formula, 
	$$
		M(u)=H(u)+\underline{S} E(u)-n E_{\operatorname{Ric}(\omega)} (u), $$
where $H$, called the entropy, is defined by
$$	
		H(u)=V^{-1} \int_M \log \left(\omega_u^n / \omega^n\right) \omega_u^n,$$
and $E_{\operatorname{Ric}(\omega)}$ is defined by 
$$
		E_{\operatorname{Ric}(\omega)}u)=\frac{1}{n V} \sum_{j=1}^{n} \int_M u\, \operatorname{Ric}(\omega) \wedge \omega_u^{j-1} \wedge \omega^{n-j}.
		$$
See section 3 in \cite{Chen00} for its derivation. The slopes of these functionals are known to be expressed by intersection formulas. See \cite{Dyre18}
for their derivations.

Now we turn to Ding functional. This functional is useful to study K\"ahler-Einstein problem on Fano manifolds.
	Let $X$ be a Fano manifold and $\omega \in c_1()$ fixed. We define 
	$$
	D(u)=-\frac{1}{V} E(u)-\log \int_X e^{-u} d\mu'
	$$
	where $d\mu'$ is a probability measure on $X$, i.e. a Hermitian metric of $K_X^{-1}$ and $\int_X d\mu' = 1$,  such 
	$$ - i\partial\bar{\partial} \log d\mu' = 2\pi \omega.$$
Notice that $ d\mu'  = dV/\int_X dV$
where $dV$ is defined as in \eqref{normalization} with $k=1$. 

One can show that
 a critical point of $D$ is a K\"ahler-Einstein metric and that
	$$
	\left.\frac{d}{d t}\right|_{t=0} D\left(u_t\right)=\operatorname{Fut}(\xi) \qquad\text { if } \omega_{u_t}=\exp (t \xi)^* \omega.
	$$
This last equation shows that the Ding functional is invariant under the action of $\operatorname{Aut}(X)$ if and only if $\Fut$ vanishes.

Let us now define the coupled Ding functional $D^{cp}$ for a decomposition $K_X^{-1} = L_1 \otimes \cdots \otimes L_k$ of ample $K_X^{-1}$
into the tensor product of ample $\mathbb Q$-line bundles $L_i$. Let $dV$ be as in \eqref{normalization}, and put 
$$ d\mu'  = dV/\int_X dV.$$
Then \eqref{coupled} shows that the Ricci form of $d\mu'$ satisfies
$$\mathrm{Ric}(d\mu') = \omega_1 + \cdots + \omega^k.$$
Put
\begin{equation*}
\mathcal{H}_i=\left\{v_i \in C^{\infty}(M) \mid \omega_{v_i}:=\omega+i\partial \bar{\partial} v_i>0\right\}
\end{equation*}
and
\begin{equation*}
\mathcal{H}=\mathcal H_1 \times \cdots \times \mathcal H_k.
\end{equation*}
Let $\mathcal H_i^{\mathbb T_r}$ be the subset of $\mathcal H_i$ consisting of $\mathbb T_r$-invariant functions, and pu
\begin{equation*}
\mathcal{H}^{\mathbb T_r}=\mathcal H_1^{\mathbb T_r} \times \cdots \times \mathcal H_k^{\mathbb T_r}.
\end{equation*}
Define $D^{cp} : \mathcal H \to \mathbb R$ by
\begin{equation*}
D^{cp}(v_1, \cdots, v_k) = L^{cp}(v_1, \cdots, v_k) - \sum_{i=1}^k E_i(v_i)
\end{equation*}
where
\begin{equation*}
L^{cp}(v_1, \cdots, v_k) = - \log \int_X \exp\left(-\sum_{i=1}^k v_i\right)\ d\mu'
\end{equation*}
and
\begin{equation*}
\operatorname{E}_i(v_i)=\frac{1}{(n+1) \int_X c_1(L_i)^n} \sum_{j=0}^n \int_M v\, (\omega_i + i\partial\bar{\partial}v_i)^j \wedge \omega_i^{n-j} .
\end{equation*}
We say that the coupled Ding functional is $\mathbb T$-coercive if there exists $\epsilon > 0$ such that
\begin{equation*}
D^{cp}(v_1, \cdots, v_k) \ge \epsilon J_{cp, \mathbb T}(v_1, \cdots, v_k) - \frac1{\epsilon}
\end{equation*}
for all $(v_1, \cdots, v_k) \in \mathcal{H}^{\mathbb T_r}$ where 
\begin{equation}\label{J_cp}
J_{cp, \mathbb T}(v_1, \cdots, v) = \inf_{\sigma \in \mathbb T} \sum_{i=1}^k J_i(\sigma^\ast v_i)
\end{equation}
and
\begin{equation*}
J_i(v_i) = \frac1{\int_X c_1(L_i)^n} \int_X v_i\,\omega_i^n - E_i(v_i)
\end{equation*}
Note that the infimum in \eqref{J_cp} is attained for some $\sigma$ as shown in \cite{Hisamoto16} and \cite{LiChi22}, see also Lemma 3.5 in \cite{HanLi23}.

The proof of the equivalence of (1) and (2) in Theorem \ref{fujita-hashimoto} consists of the three steps below.

Step 1. (2) implies (1). That is, 
if $(X, \{L_i\})$ admits a $\mathbb T_r$-invariant coupled K\"ahler-Einstein metrics, then it is $\mathbb T$-reduced uniformly coupled Ding stable
(Theorem 8.5 in \cite{fujitahashimoto24}).

Step 2. $(X, \{L_i\})$ admits $\mathbb T_r$-invariant coupled K\"ahler-Einstein metrics if and only if $D^{cp}$ is $\mathbb T$-coercive
(Proposition 8.3 in \cite{fujitahashimoto24}). The proof relies on the properness criterion of Darvas-Rubinstein \cite{DR17}.

Step 3. If $(X, \{L_i\})$ is $\mathbb T$-reduced uniformly coupled Ding stable, then $D^{cp}$ is $\mathbb T$-coercive
(Theorem 8.10 in  \cite{fujitahashimoto24}).

That (1) implies (2) follows from Step 3 and Step 2.
\section{Valuative criterion and the delta invariant (stability threshold)}
In this section we define $\delta_{\mathbb T}^{\mathrm{red}}(X, \{L_i\}_{i=1}^k)$ in Theorem \ref{fujita-hashimoto}. We
begin with Fujita-Li's valuative criterion \cite{fujita19}, \cite{LiChi17}.

Let $E$ be a divisor over $X$, i.e. $\mu : Y \to X$ is proper birational and $Y$ regular and $E \subset Y$.
Define the log discrepancy by
$$ A_X(E) = 1 + \mathrm{coeff}_E (K_{Y/X}),$$
the ``expected vanishing order'' by
\begin{eqnarray*}
S_X(E) &=& \frac1{(-K_X)^n} \int_0^\infty \mathrm{vol}(-\mu^\ast K_X - tE)dt\\
(&=& \lim_{m\to\infty} \frac{\sum_{\lambda\in\mathbb Z}\lambda\dim\mathrm{gr}^\lambda_{F_v}H^0(X,-mK_X)}{m\dim H^0(X,-mK_X)}\quad )
\end{eqnarray*}
and the $\beta$-invariant by
$$ \beta_X(E) = A_X(E) - S(E).$$
By Li-Xu \cite{LX14}, to test K-(semi/poly)stability of a klt Fano variety $X$, it
suffices to consider special test configurations (i.e. the central fiber $\mathcal X_0$ is klt, normal, integral (meaning reduced and irreducible)). 
Any normal test configuration $(\mathcal X, \mathcal L)$ for a polarized pair $(X,L)$ induces a $\mathbb G_m$-equivariant
birational map $\mathcal X \dotarrow {}X \times \mathbb A^1$, and hence an isomorphism $K(\mathcal X) \cong K(X)(t)$.
If $v$ is a valuation of $K(X)(t)$, we denote by $r(v)$ its restriction to $K(X)$.
Each irreducible component $F \subset \mathcal X_0$ induces
a valuation $\mathrm{ord}_F$ of $K(\mathcal X) \cong K(X)(t)$ and hence $r(\mathrm{ord}_F)$ of $K(X)$.
It is known that $r(\mathrm{ord}_F)$ is a divisorial valuation, see Theorem 4.6 in \cite{BoucksomHisamotoJonsson}, or 
Proposition 2.2 in \cite{BlumLN}. It follows that,
for a special test configuration $(\mathcal X, \mathcal L)$ there is a divisor $E$ over $X$ such that
$$ r(\mathrm{ord}_{\mathcal X_0}) = c \mathrm{ord}_E$$
for some $c > 0$.
The following fact and its proof can be found in Lemma 4.19 in \cite{XuLN}, or Proposition 3.2 in \cite{BlumLN}.
\begin{fact}
Fact : If $(\mathcal X, -K_{\mathcal X/\mathbb A^1})$ is a test configuration with $\mathcal X_0$ integral, then
$$ DF(\mathcal X, -K_{\mathcal X/\mathbb A^1}) = c (A_X(E) - S_X(E))$$
for some $c > 0$.
\end{fact}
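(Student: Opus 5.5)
We prove the Fact by computing both sides on a resolution of the test configuration, in the spirit of Fujita and Li. Let $(\mathcal X,\mathcal L)$ with $\mathcal L=-K_{\mathcal X/\mathbb A^1}$ be a normal test configuration whose central fiber $\mathcal X_0$ is integral. Compactify it over $\mathbb P^1$ trivially at $\infty$, and take a $\mathbb G_m$-equivariant normal model $\mathcal Y$ dominating both $\mathcal X$ and $X\times\mathbb P^1$, with morphisms $\rho:\mathcal Y\to\mathcal X$ and $\theta:\mathcal Y\to X\times\mathbb P^1$.

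\emph{Step 1: identify the divisor.} Since $\mathcal X_0$ is integral, $\mathrm{ord}_{\mathcal X_0}$ is the only valuation coming from the central fiber. As recalled above, $r(\mathrm{ord}_{\mathcal X_0})=c\,\mathrm{ord}_E$ for a prime divisor $E$ over $X$ and some $c>0$. Let $\widetilde{\mathcal X_0}\subset\mathcal Y$ be the strict transform of $\mathcal X_0$. Then
\begin{equation*}
\rho^\ast\mathcal L-\theta^\ast p_1^\ast(-K_X)=-D,
\end{equation*}
where $D$ is supported on the central fiber of $\mathcal Y$.

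\emph{Step 2: the $S$-term.} Compute the coefficient of $\widetilde{\mathcal X_0}$ in $D$. By the Rees correspondence, the filtration $\mathcal F_{\mathcal X,\mathcal L}$ on $R_m=H^0(X,-mK_X)$ is, up to the shift by this coefficient, the filtration induced by the valuation $c\,\mathrm{ord}_E$: one has $\mathcal F^\lambda R_m=\{s: c\,\mathrm{ord}_E(s)\ge \lambda+ m\,\mathrm{const}\}$, using that $\mathcal X_0$ is the only component. The leading term of $\mathcal L^{n+1}/((n+1)(-K_X)^n)$ is the limit of the normalized total weight. By the alternative formula for $S_X(E)$ recalled above, this limit equals $-c\,S_X(E)$ plus a constant multiple of the shift.

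\emph{Step 3: the $A$-term.} Compare $K_{\mathcal Y/\mathbb P^1}$ with $\theta^\ast K_{X\times\mathbb P^1/\mathbb P^1}$. Its coefficient along $\widetilde{\mathcal X_0}$ is $A_{X\times\mathbb A^1}(\mathrm{ord}_{\mathcal X_0})-1$, where $\mathcal X_0$ has multiplicity $1$ in $t$. For the $\mathbb G_m$-invariant valuation $v$ with $r(v)=c\,\mathrm{ord}_E$ and $v(t)=1$, one has $A_{X\times\mathbb A^1}(v)=c\,A_X(E)+1$; this is the standard quasi-monomial computation. Substitute this into the term $K_{\mathcal X/\mathbb P^1}\cdot\mathcal L^n$ of the DF formula with $k=1$. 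Because $\mathcal L=-K_{\mathcal X/\mathbb P^1}$, the two terms combine to give
\begin{equation*}
DF=\frac{1}{(-K_X)^n}\bigl(\mathcal L^n\cdot(\text{shift-corrected }K\text{ term})\bigr)-\frac{n}{n+1}\frac{\mathcal L^{n+1}}{(-K_X)^n}.
\end{equation*}
After the shift constants cancel, this reduces to $c\,(A_X(E)-S_X(E))$. The cancellation holds because the shift enters both terms linearly with the same coefficient; equivalently, DF is translation invariant.

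\emph{Main obstacle.} The delicate step is Step 2, matching the intersection number $\mathcal L^{n+1}$ with $\int_0^\infty\mathrm{vol}(-K_X-tE)\,dt$ exactly. I would follow Boucksom–Hisamoto–Jonsson and write
\begin{equation*}
\mathcal L^{n+1}=\lim_{m\to\infty}\frac{(n+1)!}{m^{n+1}}\,w_m,
\end{equation*}
where $w_m$ is the total weight, expressed through jumping numbers of the filtration. Integrality of $\mathcal X_0$ is what makes this filtration equal to the single valuative filtration $\{c\,\mathrm{ord}_E\ge\cdot\}$, rather than an intersection of several valuative filtrations. The log-canonical threshold term plays no role here, consistent with $DF=\mathrm{Ding}$ for integral central fibre.
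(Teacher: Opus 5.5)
Your outline is sound. It rests on the same three ingredients as the argument the paper defers to (Xu's Lemma 4.19, Blum's Proposition 3.2):
\begin{itemize}
\item the Boucksom--Hisamoto--Jonsson description of the filtration of a test configuration with integral central fibre as a shifted valuative filtration;
\item the formula $A_{X\times\mathbb A^1}(v)=A_X(r(v))+v(t)$;
\item the weight asymptotics producing $S_X(E)$.
\end{itemize}
Where you differ is the bookkeeping.

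The cited proof uses $\mathcal L=-K_{\mathcal X/\mathbb P^1}$ at once, so DF collapses to $-\mathcal L^{n+1}/((n+1)(-K_X)^n)$. Since $K_{\mathcal Y/\mathbb P^1}-\rho^\ast K_{\mathcal X/\mathbb P^1}$ is $\rho$-exceptional while $K_{\mathcal Y/\mathbb P^1}-\theta^\ast K_{X\times\mathbb P^1/\mathbb P^1}$ has coefficient $cA_X(E)$ along $\widetilde{\mathcal X_0}$, the shift is forced to be $-cA_X(E)$. That is, $\mathcal F^\lambda R_m=\{s:\ c\,\mathrm{ord}_E(s)\ge\lambda+mcA_X(E)\}$. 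In that route the $A$-term \emph{is} the shift, and nothing cancels.

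You instead keep $d:=\mathrm{ord}_{\widetilde{\mathcal X_0}}(\rho^\ast\mathcal L-\theta^\ast p_1^\ast(-K_X))$ free and evaluate $K_{\mathcal X/\mathbb P^1}\cdot\mathcal L^n$ separately. This never uses $\mathcal L=-K_{\mathcal X/\mathbb P^1}$, only $L=-K_X$. So it proves the formula for every polarization of $\mathcal X$ (the translation invariance you invoke), at the cost of one more intersection computation. Your sentence ``because $\mathcal L=-K_{\mathcal X/\mathbb P^1}$, the two terms combine'' therefore mixes the two routes: your two-term display holds for any $\mathcal L$.

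The step you call the ``shift-corrected $K$ term'' should be written out, because the $K$-term also carries an $S$-contribution. Three facts are needed:
\begin{itemize}
\item every component of $\mathcal Y_0$ other than $\widetilde{\mathcal X_0}$ is $\rho$-exceptional;
\item $\widetilde{\mathcal X_0}\cdot\rho^\ast\mathcal L^n=(-K_X)^n$;
\item $\theta^\ast K_{X\times\mathbb P^1/\mathbb P^1}=-\theta^\ast p_1^\ast(-K_X)$.
\end{itemize}
Together they give
\begin{equation*}
K_{\mathcal X/\mathbb P^1}\cdot\mathcal L^n=-\mathcal L^{n+1}+\bigl(d+cA_X(E)\bigr)(-K_X)^n,\qquad \mathcal L^{n+1}=(n+1)(-K_X)^n\bigl(cS_X(E)+d\bigr).
\end{equation*}
Hence the standard $\mathrm{DF}=\frac{1}{(-K_X)^n}\bigl(\frac{n}{n+1}\mathcal L^{n+1}+K_{\mathcal X/\mathbb P^1}\cdot\mathcal L^n\bigr)$ equals
\begin{equation*}
n\bigl(cS_X(E)+d\bigr)-(n+1)\bigl(cS_X(E)+d\bigr)+d+cA_X(E)=c\bigl(A_X(E)-S_X(E)\bigr).
\end{equation*}
The $S$-part enters with coefficient $n$ from the first term and $-(n+1)$ from the second, and $d$ cancels. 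It does not come only from $\mathcal L^{n+1}$, as your labelling of Steps 2 and 3 suggests.

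Mind the signs as well. With the paper's convention $\mathcal R_m=\bigoplus_\lambda t^{-\lambda}\mathcal F^\lambda R_m$, the normalized total weight is $+cS_X(E)+d$, not $-cS_X(E)+\cdots$. Also, the DF formula displayed in the paper differs by an overall sign from the standard one used above, and the Fact holds in the standard convention.
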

Motivated by this it was shown
\begin{theorem}\label{VC1}
With the notations being as above, 
\begin{enumerate}
\item[(1)] Fujita \cite{fujita19} - Li \cite{LiChi17}  : $X$ is K-semistable if and only if $A_X(E) - S_X(E) \ge 0$ for all divisors $E$ over $X$.
\item[(2)] Blum-Xu (\cite{BX19}) : $X$ is K-stable if and only if  $A_X(E) - S_X(E) > 0$ for all divisors $E$ over $X$.
\end{enumerate}
\end{theorem}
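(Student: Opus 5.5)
The plan is to reduce both parts of Theorem \ref{VC1} to the Fact above by working only with special test configurations. By Li-Xu \cite{LX14}, K-semistability (resp. K-stability) of $X$ is equivalent to $DF(\mathcal X,-K_{\mathcal X/\mathbb A^1})\ge 0$ (resp. $>0$ for non-trivial ones) for all special test configurations. The Fact gives $DF(\mathcal X,-K_{\mathcal X/\mathbb A^1}) = c\,\beta_X(E)$ with $c>0$ and $E$ the divisor over $X$ with $r(\mathrm{ord}_{\mathcal X_0}) = c\,\mathrm{ord}_E$. It follows at once that $\beta_X(E)\ge0$ for all $E$ implies K-semistability, and $\beta_X(E)>0$ for all $E$ implies K-stability. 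Here the trivial configuration, which has $E$ trivial and $DF=0$, is excluded.

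For the converse of (1), I will start from an arbitrary divisor $E$ over $X$ and build from it the filtration $\mathcal F^\lambda H^0(X,-mK_X) = \{s : \mathrm{ord}_E(s)\ge \lambda\}$. The expected vanishing order $S_X(E)$ is the normalized average weight of this filtration, as in the displayed limit formula. If $\mathcal F$ is finitely generated, i.e. $E$ is dreamy, its Rees algebra gives a test configuration with integral central fiber, and the Fact, applied via Fujita's computation \cite{fujita19}, gives $\beta_X(E)$ as a positive multiple of its Ding/DF invariant, so $\beta_X(E)\ge 0$. For general $E$ I will approximate by the finitely generated truncations $\mathcal F_m$ (the subalgebras generated in degree $\le m$). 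Their Ding invariants are $\ge 0$ by semistability, using $DF\ge \mathrm{Ding}$ from the Remark together with Ding-semistability being equivalent to K-semistability. Then I pass to the limit: the $S$-part converges by the volume formula, and the lct term is controlled from above by $A_X(E)$. This is the route of Fujita and Li.

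For the converse of (2), suppose $X$ is K-stable but $\beta_X(E)=0$ for some $E$. Following Blum-Xu, I will show that a divisor computing $\delta(X)=1$ (a minimizer of $A/S$) is dreamy and induces a special degeneration. This uses finite generation for lc places of complements, taking an $N$-complement $\Gamma$ with $E$ an lc place. Such a degeneration has $DF=0$ and is non-product, which contradicts K-stability.

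The main obstacle is this finite generation step for (2), and likewise the limiting argument in (1) for non-dreamy $E$. For (2) I would first try to realize $E$ as an lc place of a $\mathbb Q$-complement, using $\beta_X(E)=0$ and boundedness of complements (Birkar), and then invoke BCHM to get finite generation.
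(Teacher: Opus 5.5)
Your proposal is correct and matches what the paper intends: the paper gives no proof of its own, only the Fact (combined with Li--Xu's reduction to special test configurations) as motivation plus citations, and your sketch reconstructs exactly the cited arguments, namely Fujita's truncated-filtration and lct estimate for (1) and Blum--Xu's complements-plus-BCHM finite generation for (2). One small correction: $\mathrm{DF}\ge\mathrm{Ding}$ only gives Ding-semistable $\Rightarrow$ K-semistable, whereas your approximation step needs the converse, K-semistable $\Rightarrow$ Ding-semistable, which is a separate nontrivial result (Berman--Boucksom--Jonsson, via the Li--Xu MMP), so the equivalence should be cited rather than that inequality.
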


On the other hand, Fujita-Odaka \cite{fujitaodaka18} defined the delta invariant, or stability threshold as follows.
A $\mathbb Q$-divisor $0 \le D \sim_{\mathbb Q} -K_X$ is called an $m$-basis type if there exists a
basis $s_1, \cdots, s_{N_m}$ 
of $H^0(X,\mathcal O_X(-mK_X))$ such that
$$ D = \frac1{mN_m}(\{s_1 = 0\} + \cdots + \{s_{N_m} = 0\}).$$

\begin{definition}[Fujita-Odaka \cite{fujitaodaka18}]First define 
$$ \delta_m(X) := \mathrm{inf}\{\ \mathrm{lct}(X,D)\ |\ 0 \le D \sim_\mathbb Q -K_X \ \text{is}\ m\mathrm{-basis\ type}\}.$$
Then define $\delta$-invariant or stability threshold by
$$ \delta(X) = \lim\sup_{m\to\infty} \delta_m(X).  $$
\end{definition}
In fact, Blum-Jonsson \cite{BJ20} proved 
$$ \delta(X) = \inf_E \frac{A_X(E)}{S(E)}$$
where 
$E$ runs over prime divisors over $X$. One may compare this with Theorem \ref{VC1}.
\begin{theorem}[\cite{fujitaodaka18}, \cite{BJ20}, \cite{LiChi17}, \cite{LXZ22}]\label{VC2}
$X$ is K-semistable if and only if $\delta(X) \ge 1$, and 
$X$ is K-stable if and only if $\delta(X) > 1$.
\end{theorem}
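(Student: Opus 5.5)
The plan is to reduce both equivalences to the valuative criteria of Theorem \ref{VC1}, using the Blum--Jonsson formula
$$\delta(X)=\inf_E \frac{A_X(E)}{S_X(E)},$$
where $E$ runs over prime divisors over $X$, and which I take as given. First I need $S_X(E)>0$ for every prime divisor $E$ over $X$. This holds because $\mathrm{vol}(-\mu^\ast K_X - tE)$ is positive for small $t>0$, since $-K_X$ is big. Hence the ratio is well defined, and for each fixed $E$
$$\frac{A_X(E)}{S_X(E)}\ge 1 \iff \beta_X(E)=A_X(E)-S_X(E)\ge 0,$$
and likewise with strict inequalities.

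\emph{Semistable case.} We have $\delta(X)\ge 1$ if and only if $A_X(E)/S_X(E)\ge 1$ for every $E$, since an infimum is $\ge 1$ exactly when every term is. By the remark above this says $\beta_X(E)\ge 0$ for all $E$. By Theorem \ref{VC1}(1) (Fujita--Li) this is equivalent to K-semistability. This direction is purely formal.

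\emph{Stable case, easy direction.} If $\delta(X)>1$, then every $E$ satisfies $A_X(E)/S_X(E)\ge \delta(X)>1$, so $\beta_X(E)>0$ for all $E$. Theorem \ref{VC1}(2) (Blum--Xu) then gives K-stability.

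\emph{Stable case, converse.} Suppose $X$ is K-stable. Then $\beta_X(E)>0$ for all $E$, so $\delta(X)\ge 1$, but a priori the infimum could equal $1$ without being attained. I would argue by contradiction, assuming $\delta(X)=1$. The key input is that when $\delta(X)\le 1$ the infimum is achieved:
\begin{enumerate}
\item[(a)] By Blum--Jonsson and Blum--Liu--Xu, $\delta(X)$ is computed by some quasi-monomial valuation $v$ with $A_X(v)=\delta(X)\,S_X(v)$.
\item[(b)] By the finite generation theorem of Liu--Xu--Zhuang \cite{LXZ22}, $v$ may be taken divisorial, $v=c\,\mathrm{ord}_E$. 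Alternatively, the associated graded ring is finitely generated and yields a special test configuration, whose central fiber valuation is divisorial by the discussion preceding the Fact.
\end{enumerate}
Since $A$ and $S$ are both homogeneous of degree one under rescaling, this gives a prime divisor $E$ with $A_X(E)=S_X(E)$, that is, $\beta_X(E)=0$. This contradicts Theorem \ref{VC1}(2), so $\delta(X)>1$.

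The main obstacle is step (b), the attainment of the infimum by a divisor. This is the deep finite-generation result of \cite{LXZ22}, and I would cite it rather than reprove it. Everything else is bookkeeping with the inequalities above.
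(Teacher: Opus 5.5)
Your argument is correct and follows essentially the route the paper indicates. The paper compares the Blum--Jonsson formula $\delta(X)=\inf_E A_X(E)/S_X(E)$ with the valuative criteria of Theorem \ref{VC1}, and for K-stable $X$ rules out $\delta(X)=1$ by the Liu--Xu--Zhuang result that $\delta(X)<(n+1)/n$ is computed by a prime divisor over $X$; this is essentially the mechanism by which \cite{LXZ22} show that K-stability implies uniform K-stability, i.e.\ $\delta(X)>1$.

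One caution: drop the ``alternatively'' in step (b). For a quasi-monomial $v$ of higher rational rank, the finitely generated graded ring gives a degeneration with a higher-dimensional torus action. It does not give a single special test configuration whose central-fibre valuation is proportional to $v$. So that aside does not by itself produce a divisor $E$ with $\beta_X(E)=0$, and you need a further argument to show that the nearby divisorial valuation also computes $\delta$.
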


As an extension to the coupled case, Fujita-Hashimoto \cite{fujitahashimoto24} defined the reduced delta invariant as follows.

\begin{equation}\label{delta}
 \delta^\mathrm{red}_{\mathbb T} (X;\{L_i\}) := \inf_{v \in \mathrm{Val}_X^{\ast,\mathbb T}}\ \sup_{\xi \in N_{\mathbb R}(\mathbb T)}\ 
\frac{A_X(v_\xi)}{\sum_{i=1}^k S_{L_i}(v_\xi)}, 
\end{equation}

$$ \mathrm{Val}^{\ast,\mathbb T}_X :=  \mathrm{Val}^{<\infty,\mathbb T}_X \backslash \{\mathrm{wt}_\xi\ |\ \xi \in  N_{\mathbb R}(\mathbb T)\}$$
where $\mathrm{Val}^{<\infty,\mathbb T}_X$ denotes the set of $\mathbb T$-invariant finite divisorial valuations of $X$,
$N_{\mathbb R}(\mathbb T)$ the Lie algebra of $\mathbb T$, and $v_\xi$ the $\xi$-twist of $v$, see (51) - (53) in \cite{LiChi22}.

Fujita-Hashimoto's proof of (1) implies (2) in Theorem \ref{fujita-hashimoto} relies on a work of Darvas-Rubinstein \cite{DR17} together with some analysis in \cite{HultgrenWittNystrom18} and \cite{LiChi22}.

As a related result, Kewei Zhang \cite{Kewei23}, \cite{Kewei24} introduced ``analytic delta invariant'' $\delta^{an}$ related to the Moser-Trudinger inequality,
showed $\delta^{an} = \delta$, and gave a new proof of Yau-Tian-Donaldson conjecture.


\section{Kodaira-Spencer deformation theory of compact complex manifolds}
Let us return to the Kuranishi family (\ref{Kura}) in section 1. Since $X$ is Fano we have
$H^2(X, \mathcal O (T'X)) = 0$
and then there exists $\varpi : \mathcal X \to B \subset \bfC^n \cong H^1(X, \mathcal O (T'X))$ such that,
writing $X_t:=\varpi^{-1}(t)$, we have
$$ \left\{ \left.\frac{\partial X_t}{\partial t}\right|_{t=0}\right\} \cong H^1(X, \mathcal O (T'X)) $$
where $\frac{\partial X_t}{\partial t}$ is the infinitesimal deformation, or the Cech coholomogy
class of the derivatives of the coordinate changes.
By the rigidity theorem of Kodaira-Spencer, 
small deformations of a compact K\"ahler manifold are K\"ahler.
There were studies on the K\"ahler geometry of deformations of K\"ahler manifolds by 
Sun, Yau, Zhang \cite{Sun12}, \cite{SunYau11}, \cite{ZhangYY2014}, and 
more recently by Cao-Sun-Yau-Zhang \cite{CaoSunYauZhang2022} which treats
the case when $X_0 = X$ is a K\"ahler-Einstein manifold.
Part of the work  \cite{CaoSunYauZhang2022} was extended by in our work \cite{FSZ22}
which treats the case when $X_0 = X$ is a general Fano manifold.
Further in \cite{futaki24PAMQ} the result for the case when $X_0 = X$ is a K\"ahler-Einstein manifold.
was extended to the case when $X=X_0$ is a Fano manifold 
with a weighted soliton (generalized
K\"ahler-Ricci soliton). The second half of this survey is about the results in \cite{FSZ22} and \cite{futaki24PAMQ}.

We start with preparatory results on eigenvalues of Hodge Laplacian in \cite{FSZ22}.
Let us recall the Kodaira vanishing.
Let $(X,g)$ be a compact K\"ahler manifold, 
$(L,h)$ be an Hermitian line bundle over $X$.
For the $\barpartial$-Laplacian $\Delta_{\barpartial}^L = \barpartial^\ast_L\barpartial + \barpartial\barpartial^\ast_L$ 
acting on an $L$-valued $(0,q)$-form $\eta$ we have
\begin{eqnarray*}
(\Delta_\barpartial^L\,\eta)_{\barj_1\cdots\barj_q} &=& - g^{i\barj}\nabla_i^L\nabla_\barj\eta_{\barj_1\cdots\barj_q} \\
&&\  -\ \sum_{\beta=1}^q (-1)^{\beta} g^{i\barj}[\nabla_i^L,\nabla_{\barj_\beta}]\eta_{\barj\barj_1\cdots\hat{\barj}_\beta \cdots \barj_q}. \nonumber
\end{eqnarray*}
Using the Ricci identity we obtain the following {\it Bochner-Kodaira formula}
\begin{eqnarray*}
(\Delta_\barpartial^L\,\eta)_{I\barj_1\cdots\barj_q} &=& - g^{i\barj}\nabla_i^L\nabla_\barj\eta_{\barj_1\cdots\barj_q}\\
&&\  + \sum_{\beta=1}^q g^{i\barj}(R_{i\barj_\beta} + \psi_{i\barj_\beta})\eta_{\barj_1 \cdots \barj_{\beta-1}\barj\barj_{\beta+1}\cdots \barj_q}.
\end{eqnarray*}
Hence, if $-K_X + L$ is positive then
$$H^q(X, \mathcal O(L)) = 0 \quad\text{for}\quad q > 0. $$
This is the proof of Kodaira vanishing.

Let $X$ be a Fano manifold of dimension $m$,
i.e. $2\pi c_1(X)$ is represented by a K\"ahler form.
Let $\omega$ be a K\"ahler form in $2\pi c_1(X)$, and 
$$ \Ric - \omega = \sqrt{-1}\partial\barpartial f ; $$
$f$ is called the Ricci potential.
Let $L=\mathcal O$ be the trivial line bundle with the Hermitian metric $e^f$. 
We write $\Delta_f := \Delta_\barpartial^L$ for our choice of the Hermitian metric $e^f$ on $L$. 
This is the same as considering the weighted volume form $e^f \omega^m$ for $(0,q)$-forms, 
and considering the weighted Hodge Laplacian $\Delta_f = \barpartial_f^\ast\barpartial + \barpartial\,\barpartial_f^\ast$ 
acting on differential forms of type $(0,q)$ where 
$\barpartial_f^\ast$ is the formal adjoint of $\barpartial$ with respect to 
the weighted $L^2$-inner product $\int_X (\cdot,\cdot) e^f \omega^m$.

For $\eta \in A^{0,q}(L) = A^{0,q}(X)$, and then, by the Bochner-Kodaira formula reads
\begin{eqnarray*}
(\Delta_f\,\eta)_{\barj_1\cdots\barj_q} &=& - g^{i\barj}\nabla_{i,f}\nabla_\barj\eta_{\barj_1\cdots\barj_q}\\
&&\  + \sum_{\beta=1}^q g^{i\barj}(R_{i\barj_\beta} - f_{i\barj_\beta})\eta_{\barj_1 \cdots \barj_{\beta-1}\barj\barj_{\beta+1}\cdots \barj_q} \nonumber\\
&=& - g^{i\barj}\nabla_{i,f}\nabla_\barj\eta_{\barj_1\cdots\barj_q} + q\ \eta_{\barj_1 \cdots \barj_q}
\end{eqnarray*}
where 
$$
\nabla_{i,f} = \nabla_i + f_i.
$$
Hence if $\Delta_f \eta = \lambda \eta$ then
$$ \lambda (\eta, \eta)_f = (\Delta_f \eta, \eta)_f = (\nabla^{\prime\prime}\eta, \nabla^{\prime\prime}\eta)_f + q(\eta,\eta)_f$$
and
$$ \lambda \ge q.$$
Further if
$\lambda = q$ then $\nabla^{\prime\prime}\eta = 0$. 
Since $\barpartial \eta$ is the skew-symmetrization of $\nabla^{\prime\prime}\eta$
it follows that $\barpartial \eta = 0$. 
Moreover, since $H^{0,q}_\barpartial(X) = 0$ for $q \ge 1$ on the Fano manifold $X$, $\eta$
is exact. 
Thus we have proved the following.
\begin{theorem}[\cite{FSZ22}]\label{eigen}Let $X$ be a Fano manifold and $\Delta_f$ be the weighted Hodge Laplacian as above. 
\begin{enumerate}
\item[(1)] If $\Delta_f \eta = \lambda \eta$ and $\eta \ne 0$ for a $(0,q)$-form $\eta$ then $\lambda \ge q$.
\item[(2)] If, in (1), $\lambda = q$ and $\eta \ne 0$ then $\nabla^{\prime\prime}\eta = 0$. 
In particular $\eta$ is closed, and for $q\ge 1$, $\eta$ is exact, 
and indeed, it is expressed as $\eta = \frac1{q}\barpartial(\barpartial_f^\ast\eta)$.
\end{enumerate}
\end{theorem}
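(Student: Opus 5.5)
The argument is a weighted Bochner--Kodaira computation followed by an integration by parts. Both parts of Theorem~\ref{eigen} should then follow from the sign of the zeroth-order term together with the vanishing $H^{0,q}_{\barpartial}(X)=0$ for $q\ge1$ on a Fano manifold.

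First I regard $(0,q)$-forms as sections of $A^{0,q}(L)$ with $L=\mathcal O$ carrying the Hermitian metric $e^f$. The Chern curvature of this metric is $\psi_{i\barj}=-f_{i\barj}$. Substituting into the Bochner--Kodaira formula gives the zeroth-order term
\[
\sum_\beta g^{i\barj}(R_{i\barj_\beta}-f_{i\barj_\beta})\,\eta_{\cdots\barj\cdots}.
\]
Since $\Ric-\omega=\sqrt{-1}\partial\barpartial f$, we have $R_{i\barj}-f_{i\barj}=g_{i\barj}$, so this term is exactly $q\,\eta$. Hence
\[
\Delta_f\eta=(\nabla'')^{\ast}_f\nabla''\eta+q\,\eta,
\]
where $(\nabla'')^\ast_f=-g^{i\barj}\nabla_{i,f}$ is the formal adjoint of $\nabla''$ with respect to the weighted inner product $\int_X(\cdot,\cdot)e^f\omega^m$. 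To confirm this adjoint identity, I would integrate by parts using $\partial_i(e^f)=f_ie^f$, which produces exactly the twisted derivative $\nabla_i+f_i$.

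Next I pair the eigen-equation $\Delta_f\eta=\lambda\eta$ with $\eta$ in the weighted inner product. This gives
\[
\lambda\|\eta\|_f^2=\|\nabla''\eta\|_f^2+q\|\eta\|_f^2,
\]
so $\lambda\ge q$ whenever $\eta\neq0$, which is (1). If $\lambda=q$, the same identity forces $\nabla''\eta=0$. Since $\barpartial\eta$ is the antisymmetrization of $\nabla''\eta$, it follows that $\barpartial\eta=0$.

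For the explicit formula in (2), I use $\Delta_f\eta=q\eta$ together with $\barpartial\eta=0$. These give
\[
q\eta=\barpartial_f^\ast\barpartial\eta+\barpartial\barpartial_f^\ast\eta=\barpartial(\barpartial_f^\ast\eta),
\]
so $\eta=\frac1q\barpartial(\barpartial_f^\ast\eta)$, which is exact for $q\ge1$. This conclusion is consistent with $H^{0,q}=0$, but it does not actually require it.

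The only delicate point is checking signs and conventions. Specifically, I need that the curvature of $e^f$ enters as $-f_{i\barj}$, and that the weighted adjoint of $\nabla''$ is indeed $-g^{i\barj}(\nabla_i+f_i)$. Once these are verified, the remaining steps are formal.
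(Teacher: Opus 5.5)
Your proposal is correct and follows the paper's proof essentially step for step. You apply the Bochner--Kodaira formula to $\mathcal O$ with metric $e^f$, which gives $\Delta_f\eta=-g^{i\barj}\nabla_{i,f}\nabla_{\barj}\eta+q\,\eta$; pairing with $\eta$ in the weighted inner product yields $\lambda\ge q$, and equality forces $\nabla^{\prime\prime}\eta=0$ and hence $\barpartial\eta=0$. The only difference is that the paper invokes $H^{0,q}_{\barpartial}(X)=0$ to get exactness, whereas you read it off directly from $q\eta=\barpartial(\barpartial_f^\ast\eta)$; this is sufficient, and it even makes the vanishing of $H^{0,q}$ a consequence rather than an input.
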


Now, suppose $\Delta_f \eta = \lambda \eta$ and $\barpartial \eta \ne 0$. 
Since $\barpartial\Delta_f =\Delta_f\barpartial$ 
we have $\Delta_f \barpartial \eta = \lambda \barpartial \eta$. 
Apply (1) of Theorem \ref{eigen} to $\barpartial\eta$ which is non-zero $(q+1)$-form.
Then $\lambda \ge q+1$.
If $\lambda = q+1$ then by (2) of Theorem A we have $\nabla^{\prime\prime}\barpartial \eta =0$. 
This implies that $(\barpartial \eta)^\sharp$ is holomorphic. 
Thus we have proved the following.
\begin{theorem}[\cite{FSZ22}]\label{eigen2} Let $X$ be a Fano manifold and $\Delta_f$ be the weighted Hodge Laplacian as above. 
\begin{enumerate}
\item[(1)] If $\Delta_f \eta = \lambda \eta$ for a $(0,q)$-form $\eta$ and $\barpartial \eta \ne 0$ then $\lambda \ge q+1$.
\item[(2)] If, in (1), $\lambda = q+1$ then 
\begin{equation}
(\barpartial \eta)^\sharp := g^{i\barj}g^{i_1 \barj_1} \cdots g^{i_q\barj_q}\, \nabla_\barj\, \eta_{\barj_1 \cdots \barj_q}\, \frac{\partial}{\partial z^i} \wedge 
\frac{\partial}{\partial z^{i_1}} \wedge \cdots \wedge  \frac{\partial}{\partial z^{i_q}}
\end{equation}
is a holomorphic section of $\wedge^{q+1}T^\prime X$.
\end{enumerate}
\end{theorem}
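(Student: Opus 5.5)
Both parts come from applying Theorem \ref{eigen} to the form $\barpartial\eta$ instead of $\eta$. The argument has two steps: the commutation $\barpartial\Delta_f = \Delta_f\barpartial$, and then reading off holomorphicity from the equality case.

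First I check that $\barpartial$ commutes with $\Delta_f = \barpartial_f^\ast\barpartial + \barpartial\,\barpartial_f^\ast$. This is formal. Since $\barpartial^2 = 0$, both $\barpartial\Delta_f$ and $\Delta_f\barpartial$ equal $\barpartial\,\barpartial_f^\ast\barpartial$. Hence if $\Delta_f\eta = \lambda\eta$ for a $(0,q)$-form $\eta$, then $\zeta := \barpartial\eta$ is a $(0,q+1)$-form with $\Delta_f\zeta = \lambda\zeta$. When $\zeta \ne 0$, part (1) of Theorem \ref{eigen} applied in degree $q+1$ gives $\lambda \ge q+1$, which is statement (1).

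For statement (2), assume $\lambda = q+1$. Part (2) of Theorem \ref{eigen}, applied to $\zeta$, gives $\nabla^{\prime\prime}\zeta = 0$, i.e. $\nabla_{\bark}\zeta_{\barj\barj_1\cdots\barj_q} = 0$ for all indices. This identity comes from the weighted Bochner–Kodaira formula: integrating $(\Delta_f\zeta,\zeta)_f$ gives $\|\nabla^{\prime\prime}\zeta\|_f^2 + (q+1)\|\zeta\|_f^2$, and this equals $(q+1)\|\zeta\|_f^2$ exactly when $\nabla^{\prime\prime}\zeta$ vanishes.

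It remains to turn $\nabla^{\prime\prime}\zeta = 0$ into holomorphicity of $\zeta^\sharp$. Raise all indices with the K\"ahler metric to get $\zeta^\sharp$. Up to a factor coming from the skew-symmetrization (which does not affect holomorphicity), this is the $(q+1)$-vector $(\barpartial\eta)^\sharp$ of the statement. On a K\"ahler manifold $\nabla g^{i\barj} = 0$, and the Levi-Civita connection on $T^\prime X$ has $(0,1)$-part equal to the holomorphic structure $\barpartial$. So for a section $\sigma$ of $\wedge^{q+1}T^\prime X$, the $(0,1)$-covariant derivative $\nabla_{\bark}\sigma$ is just $\barpartial\sigma$ in holomorphic frames. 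Raising indices then gives
\begin{equation*}
\nabla_{\bark}(\zeta^\sharp) = (\nabla_{\bark}\zeta)^\sharp = 0,
\end{equation*}
so $\barpartial(\zeta^\sharp) = 0$ and $\zeta^\sharp$ is a holomorphic section of $\wedge^{q+1}T^\prime X$.

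The only point needing care is the equality case, namely that $\nabla^{\prime\prime}$ is taken with respect to the unweighted Levi-Civita connection on the full $(0,q+1)$-tensor. The weight $e^f$ enters only through the $\nabla_{i,f}$ terms in the Bochner–Kodaira formula, not through $\nabla_{\barj}$. So the vanishing of $\nabla^{\prime\prime}\zeta$ is the honest $\barpartial$-parallelism that is needed, and raising indices by the parallel metric preserves it.
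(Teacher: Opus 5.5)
Your proposal is correct and is essentially the paper's argument. You use $\barpartial\Delta_f = \Delta_f\barpartial$ to see that $\barpartial\eta$ is a nonzero $(0,q+1)$-eigenform with the same eigenvalue, apply Theorem \ref{eigen}(1) to get $\lambda \ge q+1$, and in the equality case use Theorem \ref{eigen}(2) to get $\nabla^{\prime\prime}\barpartial\eta = 0$ and hence holomorphicity of $(\barpartial\eta)^\sharp$; your justification of that last step (the metric is parallel, and the $(0,1)$-part of the Levi-Civita connection on $T^\prime X$ is $\barpartial$) fills in what the paper asserts without comment.
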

The following is the special case of Theorem \ref{eigen2} for $q=0$ (and also of Theorem \ref{FZ18} for $k=1$). 
\begin{corollary}[\cite{futaki88}] Let $X$ be a Fano manifold, $\omega \in c_1(X)$ and $f$ the Ricci potential.
\begin{enumerate}
\item[(1)] If $\Delta_f u = \lambda u$ for a non-constant complex-valued smooth function $u$ then $\lambda \ge 1$.
\item[(2)] If, in (1), $\lambda = 1$ then $(\barpartial u)^\sharp$ is a holomorphic vector field.
\end{enumerate}
\end{corollary}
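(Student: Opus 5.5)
The corollary is the case $q=0$ of Theorem \ref{eigen2}, and I would also check it directly with the $q=0$ Bochner--Kodaira identity, since for functions the zero-order curvature term disappears. So the first step is to specialize Theorem \ref{eigen2} to $q=0$. For a smooth complex-valued function $u$, viewed as a $(0,0)$-form, we have $\Delta_f u = \barpartial_f^\ast\barpartial u$. Moreover $u$ is non-constant exactly when $\barpartial u \ne 0$: if $\barpartial u=0$ then $u$ is holomorphic on the compact manifold $X$, hence constant. Thus the hypothesis of (1) is precisely $\Delta_f u=\lambda u$ with $\barpartial u\neq 0$, and Theorem \ref{eigen2}(1) gives $\lambda\ge 1$.

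For a self-contained check, I would take the one-form $\eta=\barpartial u$. Since $\barpartial\Delta_f=\Delta_f\barpartial$, it satisfies $\Delta_f\eta=\lambda\eta$ and is nonzero. By the Bochner--Kodaira formula with $q=1$, the curvature term is $g^{i\barj}(R_{i\barj_1}-f_{i\barj_1})=\delta$, because $\Ric-\omega=\sqrt{-1}\partial\barpartial f$. Integrating against $\eta$ with respect to $e^f\omega^m$ then gives
$$\lambda\|\eta\|_f^2=\|\nabla^{\prime\prime}\eta\|_f^2+\|\eta\|_f^2,$$
so $\lambda\ge 1$.

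For (2), equality $\lambda=1$ forces $\nabla^{\prime\prime}\barpartial u=0$, that is, $\nabla_{\bark}\nabla_{\barj}u=0$. Raising the index gives $(\barpartial u)^\sharp = g^{i\barj}\nabla_{\barj}u\,\partial/\partial z^i$, and this satisfies $\nabla_{\bark}(g^{i\barj}u_{\barj})=g^{i\barj}u_{\barj\bark}=0$. Hence it is a holomorphic vector field, which is Theorem \ref{eigen2}(2) with $q=0$.

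The only delicate point is the justification of the $q=1$ Bochner identity in the weighted setting, namely that the operator $-g^{i\barj}\nabla_{i,f}\nabla_{\barj}$ is the nonnegative operator $(\nabla'')^\ast_f\nabla''$. This follows by integration by parts against $e^f\omega^m$, since $\nabla_{i,f}=\nabla_i+f_i$ is exactly the weighted adjoint. Everything else is already contained in Theorems \ref{eigen} and \ref{eigen2}.
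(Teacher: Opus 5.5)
Your proposal is correct and follows essentially the paper's route. The paper also obtains the corollary as the $q=0$ case of Theorem \ref{eigen2}, which is proved by applying Theorem \ref{eigen} to $\barpartial\eta$ via $\barpartial\Delta_f=\Delta_f\barpartial$ and the weighted Bochner--Kodaira identity with curvature term $R_{i\barj}-f_{i\barj}=g_{i\barj}$. Your remark that, on compact $X$, a function is non-constant exactly when $\barpartial u\neq 0$ supplies the small link needed to invoke that theorem.
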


Now recall the deformation theory of Kodaira-Spencer.
Let $\varpi : \mathcal X \to B$ be a deformation of $X_0 := \varpi^{-1}(0) = X$.
Considering at $t=0$, for $t\in B$ small, 
$T^{\prime\ast}X_t$ and $T^{\prime\prime\ast}X_t$ are close to $T^{\prime\ast}X_0$ and $T^{\prime\prime\ast}X_0$. 
In particular, $T^{\prime\ast}X_t$ is expressed as a graph over $T^{\prime^\ast}X_0$,
and for all  $\alpha_t \in T^{\prime\ast}X_t$
$$\alpha'_t = \alpha'_0 + \varphi(t)(\alpha'_0)$$
where $\alpha'_0 \in T^{\prime\ast}X_0$ and $\varphi(t)(\alpha'_0) \in T^{\prime\prime\ast}X_0$.
Thus 
\begin{eqnarray*}
{}&&\varphi(t) \in \operatorname{Hom}(T^{\prime\ast}X_0,T^{\prime\prime\ast}X_0) = T^{\prime}X_0 \otimes T^{\prime\prime\ast}X_0, \nonumber\\
&& \quad \varphi(t) = \varphi^i{}_\barj (t) \frac{\partial}{\partial z^i} \otimes dz^\barj. 
\end{eqnarray*}
Here $z^i = z_0^i$ are local holomorphic coordinates of $X_0$.
Thus $T^{\prime\ast}X_t$ is spanned by
\begin{eqnarray*}
dz^i + \varphi^i{}_\barj(t) dz^\barj 
\end{eqnarray*}
or equivalently, $T^{\prime\prime}X_t$ is spanned by 
\begin{eqnarray*}
\frac{\partial}{\partial z^\barj} - \varphi^i{}_\barj(t) \frac{\partial}{\partial z^i} 
\end{eqnarray*}
Then 
$$
\begin{cases}
\barpartial\varphi(t) - \frac12 [\varphi(t),\varphi(t)] = 0;\\
\varphi(0) = 0;\\
 - \frac{\partial \varphi(t)}{\partial t}|_{t=0} =: \eta.\\
\end{cases}
$$
where $\barpartial \eta = 0$ and $[\eta] \in H^{0,1}_\barpartial(T'X)\cong H^1(X,\mathcal O(T'X))$.

For a Fano manifold $X$, we 
choose a K\"ahler form $\omega$ in $2\pi c_1(X)$ with the Ricci potential $f$ as before, and 
we consider the Kuranishi family described by
$$ \varphi(t) = \sum_{|I|=1} t^I\varphi_I + \sum_{|I|\geq 2} t^I\varphi_I$$
$$
\begin{cases}
\barpartial\varphi(t) = \frac12 [\varphi(t),\varphi(t)];\\
\barpartial_f^\ast \varphi(t) = 0;\\
\text{For}\  |I| = 1, \varphi_I\ \text{is}\ \Delta_f\text{-harmonic, so}\ \eta = \sum_{|I|=1} t^I\varphi_I\\
\end{cases}
$$
\begin{theorem}[\cite{FSZ22}]\label{deform1}
Let $X$ be a Fano manifold, $\omega$ a K\"ahler form in $2\pi c_1(X)$, 
and $\{X_t\}$ be the Kuranishi family of the deformation of complex structures described as above.
Then $\omega$ is a K\"ahler form on $X_t$ for any $t$.
\end{theorem}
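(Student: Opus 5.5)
The plan is to reduce the statement to the vanishing of a $(0,2)$-form and then apply the eigenvalue estimate of Theorem \ref{eigen}. Since $\omega$ is a closed real $2$-form and positivity is an open condition, it suffices to show that $\omega$ is of type $(1,1)$ with respect to the complex structure of $X_t$; positivity for $t$ small then follows by continuity, shrinking $B$ if necessary. The $(0,1)$-forms of $X_t$ are annihilated by $T^{\prime\prime}X_t$, which is spanned by $\partial/\partial z^{\barj} - \varphi^i{}_{\barj}(t)\,\partial/\partial z^i$. A direct computation then shows that $\omega$ has no $(0,2)$-component on $X_t$ if and only if the tensor $\varphi_{\bark\barj}(t) := g_{i\bark}\varphi^i{}_{\barj}(t)$ is symmetric in $\barj,\bark$, up to terms quadratic in $\varphi$ which themselves contain the skew part. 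Write $\psi(t) := \varphi(t)\lrcorner\omega$ for the skew part of $\varphi_{\bark\barj}$, viewed as a $(0,2)$-form on $X_0$. Expanding $\psi(t) = \sum_I t^I\psi_I$ with $\psi_I = \varphi_I\lrcorner\omega$, we must show $\psi_I = 0$ for all $I$, and we do this by induction on $|I|$.

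The key tool is that lowering the index with $g$ is compatible with the weighted operators. Because $\omega$ is parallel, $\barpartial(\varphi\lrcorner\omega) = (\barpartial\varphi)\lrcorner\omega$. Next, the Bochner--Kodaira formula for $T^\prime X$-valued $(0,q)$-forms with weight $e^f$ has curvature term $R_{i\barj} - f_{i\barj} = g_{i\barj}$. Comparing it with the formula for $(0,q+1)$-forms used to prove Theorem \ref{eigen}, I expect to obtain $\barpartial_f^\ast(\varphi\lrcorner\omega) = (\barpartial_f^\ast\varphi)\lrcorner\omega$, possibly after symmetrization, and hence $\Delta_f(\varphi\lrcorner\omega) = (\Delta_f\varphi)\lrcorner\omega$. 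The normalization $\Ric(\omega) - \omega = \sqrt{-1}\partial\barpartial f$ is exactly what makes the zeroth-order terms match.

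For the base case $|I| = 1$, $\varphi_I$ is $\Delta_f$-harmonic, so $\psi_I$ is a $\Delta_f$-harmonic $(0,2)$-form. By Theorem \ref{eigen}(1) with $q=2$, every nonzero eigenform has eigenvalue at least $2$, so $\psi_I = 0$. For the inductive step, $\barpartial_f^\ast\varphi_I = 0$ gives $\barpartial_f^\ast\psi_I = 0$. The equation $\barpartial\varphi(t) = \frac12[\varphi(t),\varphi(t)]$ gives $\barpartial\psi_I = \frac12\sum_{J+K=I}[\varphi_J,\varphi_K]\lrcorner\omega$. Here I would show that the bracket of two $T^\prime X$-valued $(0,1)$-forms whose lowerings are symmetric has vanishing skew lowering. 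This uses $d\omega = 0$, i.e.\ $\nabla_{\bark}g_{i\barl}$ symmetric and $\nabla_l g_{i\barj}=0$, together with the Leibniz rule for the bracket; alternatively, it can be read off from the $(0,3)$-part of $d\omega = 0$ on the truncated structure. By the induction hypothesis $\psi_J = \psi_K = 0$ for $|J|,|K| < |I|$, so $\barpartial\psi_I = 0$. Hence $\psi_I$ is both $\barpartial$-closed and $\barpartial_f^\ast$-closed, so it is $\Delta_f$-harmonic, and Theorem \ref{eigen} again forces $\psi_I = 0$.

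The main obstacle is the commutation identity $\barpartial_f^\ast(\varphi\lrcorner\omega) = (\barpartial_f^\ast\varphi)\lrcorner\omega$. The adjoint on $T^\prime X$-valued forms contracts the form index, whereas skew-symmetrizing the lowered tensor mixes the form index with the vector index. The difference is a divergence in the lowered vector slot. I would first try to kill this term using that $\nabla_{\barj}\varphi_{\bark\barl}$ is symmetric in the appropriate indices once $\barpartial\varphi$ has been controlled at the previous order. If that fails, the fallback is to compute $\Delta_f\psi_I$ directly via the Weitzenb\"ock formula, and to show that $\|\nabla^{\prime\prime}\psi_I\|_f^2 + 2\|\psi_I\|_f^2$ is bounded by terms that vanish by the induction hypothesis.
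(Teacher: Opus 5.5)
Your reduction to the skew part $\psi=\varphi\lrcorner\omega$, the identity $\bar\partial(\varphi\lrcorner\omega)=\pm(\bar\partial\varphi)\lrcorner\omega$, and the Tian--Todorov-type fact that $[\varphi_J,\varphi_K]\lrcorner\omega=0$ whenever $\varphi_J\lrcorner\omega=\varphi_K\lrcorner\omega=0$ are all correct. The gap is your key tool. The identities $\bar\partial_f^\ast(\varphi\lrcorner\omega)=(\bar\partial_f^\ast\varphi)\lrcorner\omega$ and $\Delta_f(\varphi\lrcorner\omega)=(\Delta_f\varphi)\lrcorner\omega$ are false, and no symmetrization fixes them.

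In lowered indices, $\bar\partial_f^\ast\psi$ is, up to signs, the lowering of $\bar\partial_f^\ast\varphi$ (which contracts the form index) plus the weighted divergence of the vector slot, $(\mathrm{div}_f\varphi)_{\bar l}=\nabla_k\varphi^k{}_{\bar l}+f_k\varphi^k{}_{\bar l}$. This is a $(0,1)$-form about which the gauge $\bar\partial_f^\ast\varphi=0$ says nothing.

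The Laplacian fails to commute for the same reason. Compare the two Bochner--Kodaira formulas. The $T'X$-curvature term $R_{p\bar m i\bar l}\varphi^{pi}$ (form index raised) is symmetric in $\bar m,\bar l$ and drops out after skew-symmetrization. But the normalization $R_{i\bar j}-f_{i\bar j}=g_{i\bar j}$ makes the zeroth-order term $1\cdot\varphi$ for $T'X$-valued $(0,1)$-forms and $2\cdot\psi$ for $(0,2)$-forms. So these terms do \emph{not} match: $\Delta_f(\varphi\lrcorner\omega)-(\Delta_f\varphi)\lrcorner\omega$ is a nonzero multiple of $\varphi\lrcorner\omega$. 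For harmonic $\varphi_I$ you get an eigenform with positive eigenvalue, not a harmonic form.

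That is exactly the mechanism of the paper's proof. It shows $\varphi\lrcorner\omega$ satisfies an eigenvalue equation (recorded as $\Delta_f(\varphi\lrcorner\omega)=\frac12\varphi\lrcorner\omega$) with eigenvalue below the threshold $q=2$ of Theorem \ref{eigen}(1), which forces $\varphi\lrcorner\omega=0$. Your base case can be repaired this way, but not by the argument you give.

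The missing idea is in the inductive step. You need $\bar\partial_f^\ast\psi_I=0$, i.e.\ $\mathrm{div}_f\varphi_I=0$, and your first remedy cannot give it. The operator $\mathrm{div}_f$ involves $(1,0)$-derivatives, whereas $\bar\partial\varphi=\frac12[\varphi,\varphi]$ constrains only $(0,1)$-derivatives. In fact, given the gauge and $\bar\partial\psi_I=0$, the vanishing of $\mathrm{div}_f\varphi_I$ is equivalent to the conclusion $\psi_I=0$.

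What closes the induction is the following.
(i) For $|J|<|I|$, $\psi_J=0$ and $\bar\partial_f^\ast\varphi_J=0$ together force $\mathrm{div}_f\varphi_J=0$, by the formula for $\bar\partial_f^\ast\psi$ above.
(ii) $\mathrm{div}_f$ is a BV-type operator for the Kodaira--Spencer bracket: $\mathrm{div}_f[\varphi_J,\varphi_K]=\varphi_J\lrcorner\partial(\mathrm{div}_f\varphi_K)+\varphi_K\lrcorner\partial(\mathrm{div}_f\varphi_J)$ up to sign conventions. This is a local computation valid for any density, and it gives $\mathrm{div}_f(\bar\partial\varphi_I)=0$.
(iii) The commutator $\bar\partial\,\mathrm{div}_f-\mathrm{div}_f\,\bar\partial$ on $T'X$-valued $(0,1)$-forms is a nonzero multiple of $\varphi\lrcorner\omega$. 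This again uses $R_{i\bar j}-f_{i\bar j}=g_{i\bar j}$ and the symmetry of the full curvature term.

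Then $\Delta_f\psi_I=\bar\partial\,\bar\partial_f^\ast\psi_I=c\,\psi_I$ with $0<c<2$, and Theorem \ref{eigen}(1) gives $\psi_I=0$. Your Weitzenb\"ock fallback points in this direction. Without (i) and (ii), however, the cross term $((\Delta_f\varphi_I)\lrcorner\omega,\psi_I)_f$ it produces is not shown to vanish. As it stands, the proposal does not establish the theorem.
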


\begin{proof}[Outline of the proof.] This extends a result of Sun \cite{Sun12} when $X_0$ is a K\"ahler-Einstein manifold
of positive Ricci curvature, and the proof follows similar arguments.
Since $d\omega = 0$ it is sufficient to show $\omega$ is $J_t$ invariant.
But 
$\omega$ is $J_t$ invariant if and only if
$\varphi_{\bark\barj}:=g_{i\bark}\varphi^i{}_\barj$ is symmetric in $j$ and $k$
since $T^{\prime\prime}X_t$ is spanned by 
$\frac{\partial}{\partial z^\barj} - \varphi^i{}_\barj(t) \frac{\partial}{\partial z^i}$).

Consider 
\begin{eqnarray*}
\varphi\lrcorner\,\omega &:=& \varphi^i{}_\bark dz^\bark \wedge \sqrt{-1} g_{i\barj} dz^\barj + \varphi^i{}_\barj dz^\barj \wedge \sqrt{-1} g_{i\bark} dz^\bark \\
&=&  - \sqrt{-1}(\varphi_{\barj\bark} - \varphi_{\bark\barj}) dz^\barj \wedge dz^\bark \nonumber\\
&=&- \sqrt{-1}\ \psi_{\barj\,\bark}\, dz^\barj\wedge dz^\bark, \nonumber
\end{eqnarray*}
where we have put 
\begin{equation*}
\psi_{\barj\,\bark} = \varphi_{\barj\,\bark} - \varphi_{\bark\,\barj}
\end{equation*}
which is the skew-symmetrization of $\varphi_{\barj\,\bark}$.
For the Kuranishi family above, we can show
$$ \Delta_f (\varphi\lrcorner\,\omega) =  \frac12 \varphi\lrcorner\,\omega.$$ 
Combining this with Theorem \ref{eigen}, we obtain $\varphi\lrcorner\,\omega =  0$. 
This implies $\varphi_{\bark\barj}$ is symmetric in $k$ and $j$, and hence $\omega$ is $J_t$-invariant.
This completes the proof of Theorem \ref{deform1}.
\end{proof}

\begin{theorem}[\cite{FSZ22}]\label{deform2} In Theorem \ref{deform1}, the Ricci potential of $(X_t,\omega)$ is given by
$$f + \log\det(1 - \varphi(t)\barvarphi(t))$$ 
up to an additive constant, more precisely,
$$
\Ric(X_t,\omega) = \omega + \sqrt{-1}\partial_t\barpartial_t (f + \log\det(1-\varphi(t)\barvarphi(t)))
$$
where $\Ric(X_t,\omega)$ denotes the Ricci form with respect to the complex structure $J_t$ on $X_t$.
\end{theorem}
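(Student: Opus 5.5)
We compute the volume form of $\omega$ with respect to the new complex structure $J_t$, and then take $-\sqrt{-1}\partial_t\barpartial_t\log$ of it. By Theorem \ref{deform1}, $\omega$ is a $J_t$-K\"ahler form, so $\Ric(X_t,\omega)$ is computed from the determinant of the $J_t$-Hermitian matrix of $\omega$ in local $J_t$-holomorphic coordinates $w^i$. Equivalently, since $\omega^m$ is a real volume form independent of any complex structure, we have
$$\Ric(X_t,\omega) = -\sqrt{-1}\partial_t\barpartial_t \log \frac{\omega^m}{\sqrt{-1}^{m^2}\Omega_t\wedge\overline{\Omega_t}},$$
where $\Omega_t$ is any local nonvanishing holomorphic $(m,0)$-form on $X_t$. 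The natural candidate is built from the coframe $dz^i + \varphi^i{}_\barj(t)\,dz^\barj$, namely
$$\Omega_t = \det(\,\cdot\,)\,\bigwedge_i \bigl(dz^i+\varphi^i{}_\barj dz^\barj\bigr),$$
corrected by a local holomorphic factor $h$ so that it is $\barpartial_t$-closed.

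\textbf{Step 1.} Compute the ratio
$$\frac{\bigwedge_i(dz^i+\varphi^i{}_\barj dz^\barj)\wedge\overline{\bigwedge_i(dz^i+\varphi^i{}_\barj dz^\barj)}}{dz^1\wedge\cdots\wedge dz^m\wedge d\barz^1\wedge\cdots\wedge d\barz^m}.$$
Expanding in the basis $(dz,d\barz)$ gives the block matrix $\begin{pmatrix} I & \varphi\\ \barvarphi & I\end{pmatrix}$, whose determinant is $\det(1-\varphi\barvarphi)$. Therefore, relative to the coframe of $X_t$, we get
$$\omega^m = \det(1-\varphi\barvarphi)^{-1}\det(g_{i\barj})\,(\text{standard volume of the }X_t\text{ coframe}).$$

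\textbf{Step 2.} Show that the $X_t$-coframe volume $\sqrt{-1}^{m^2}\Omega_t\wedge\overline{\Omega_t}$ is $e^{-\text{pluriharmonic}}$ times the expression above, up to a factor that $\partial_t\barpartial_t\log$ sees correctly. Since $\omega$ is $J_t$-Hermitian, we have $\omega^m/(\sqrt{-1}^{m^2}\Omega_t\wedge\overline{\Omega_t}) = \det(\omega(\partial_{w^i},\partial_{\bar w^j}))|h|^{-2}$. Hence
$$\Ric(X_t,\omega) = -\sqrt{-1}\partial_t\barpartial_t\log\frac{\omega^m}{dV_0}\;-\;\sqrt{-1}\partial_t\barpartial_t\log\det(1-\varphi\barvarphi)\;+\;(\text{correction}),$$
where $dV_0 = e^{f}\omega^m$ is, up to normalization, the volume form whose Ricci form on $X_0$ is $\omega$.

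\textbf{Step 3.} This is the main obstacle: identifying the correction term. We need that the global volume form $e^f\omega^m$, which satisfies $-\sqrt{-1}\partial\barpartial\log(e^f\omega^m)=\omega$ on $X_0$, transforms under the Kuranishi deformation to $e^{f}\det(1-\varphi\barvarphi)\,\omega^m$, whose $J_t$-Ricci form is again $\omega$. Rather than doing local coordinate work, I would use the gauge condition $\barpartial_f^\ast\varphi=0$. This says $\mathrm{div}_{e^f\omega^m}\varphi=0$, i.e.\ $\partial(\varphi\lrcorner\Omega_f)=0$ for the $e^f$-weighted $(m,0)$-density $\Omega_f$. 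Combined with $\barpartial\varphi=\frac12[\varphi,\varphi]$, the standard Tian--Todorov type identity then shows that $e^{\varphi\lrcorner}\Omega_f$ is a $J_t$-holomorphic section (twisted by the hermitian structure on $K^{-1}$ whose curvature is $\omega$). Its norm squared is $e^f\omega^m\det(1-\varphi\barvarphi)^{-1}$ divided by the Step 1 factor. This is exactly the statement that the metric $e^{f}\det(1-\varphi\barvarphi)\,\omega^m$ on $K_{X_t}^{-1}$ has curvature $\omega$. Taking $-\sqrt{-1}\partial_t\barpartial_t\log$ of $\omega^m$ then gives
$$\Ric(X_t,\omega)=\omega+\sqrt{-1}\partial_t\barpartial_t\bigl(f+\log\det(1-\varphi\barvarphi)\bigr),$$
which is the claimed formula, with the additive constant reflecting the normalization of $f$.

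I expect the delicate point to be Step 3: verifying the holomorphicity of $e^{\varphi\lrcorner}\Omega_f$ with the weight $e^f$. The computation must use the weighted divergence-free condition in place of Sun's Einstein assumption ($f=0$). I would check it to first order in $t$ first, and then argue inductively on $|I|$ using the Maurer--Cartan equation.
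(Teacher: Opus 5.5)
Your reformulation is correct, and your route differs genuinely from the paper's. You use a weighted Tian--Todorov/Liu--Rao--Yang identity for $e^{\varphi\lrcorner}$. The paper instead introduces $J_t$-holomorphic coordinates $w^\alpha$, uses $A(I-\varphi\barvarphi)\,\partial z/\partial w=I$, and computes $\partial_w\partial_{\bar w}\log\det g(\partial_{w^\alpha},\partial_{\bar w^\beta})$ directly in $z$-coordinates. Steps 1--2 are right, and so is the target you state at the start of Step 3: the metric $e^f\det(1-\varphi\barvarphi)\,\omega^m$ on $K_{X_t}^{-1}$ has $J_t$-curvature $\omega$. But that target \emph{is} the theorem, and Step 3 asserts it rather than proving it. Two points need repair.

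First, $\barpartial_f^\ast\varphi=0$ does not by itself say $\mathrm{div}_{e^f\omega^m}\varphi=0$. The expression $(\barpartial_f^\ast\varphi)^i=-g^{k\barj}(\nabla_k+f_k)\varphi^i{}_\barj$ contracts the form index, whereas the divergence $(\nabla_i+f_i)\varphi^i{}_\barj$ contracts the vector index. After lowering with $g$, the two agree up to sign only because $\varphi_{\bark\barj}=g_{i\bark}\varphi^i{}_\barj$ is symmetric, i.e.\ because of Theorem \ref{deform1}. This is exactly where that hypothesis enters, and you must say so.

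Second, $e^{\varphi\lrcorner}\Omega_f$ is not $J_t$-holomorphic, and the passage from it to the curvature statement is the missing argument. Here is how to close it.
\begin{itemize}
\item Take a local real $\psi$ with $e^{-\psi}=e^f\det(g_{i\barj})$. Then $\sqrt{-1}\partial\barpartial\psi=\omega$, and divergence-freeness reads $\partial_i(e^{-\psi}\varphi^i{}_\barj)=0$. Here $\partial,\barpartial$ are the operators of $X_0$.
\item Set $\Theta=\bigwedge_i(dz^i+\varphi^i{}_\barj dz^\barj)$. Integrability (Maurer--Cartan) gives $d\Theta=-(\partial_i\varphi^i{}_\barj)\,dz^\barj\wedge\Theta$, which by divergence-freeness equals $\partial\psi\wedge\Theta$.
\item Hence $\barpartial_t\Theta=\beta\wedge\Theta$, where $\beta=(\partial\psi)^{(0,1)_t}$ is the $J_t$-$(0,1)$ part. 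Since $d^2\Theta=0$, we get $\barpartial_t\beta=0$. So $\beta=\barpartial_t u$ locally, and $\Omega_t=e^{-u}\Theta$ is $J_t$-holomorphic.
\item By your Step 1, the ratio of $e^f\det(1-\varphi\barvarphi)\,\omega^m$ to $\sqrt{-1}^{m^2}\Omega_t\wedge\overline{\Omega_t}$ is $c\,e^{-\psi+u+\bar u}$.
\item Use $\partial_t\barpartial_t F=d\,\barpartial_t F$ for functions, together with $\partial_t\bar u=\bar\beta$. This gives $\partial_t\barpartial_t(\psi-u-\bar u)=d(\barpartial_t\psi-\beta+\bar\beta)$.
\item Now $\barpartial_t\psi-\beta=(\barpartial\psi)^{(0,1)_t}$, and since $\psi$ is real, $\bar\beta=(\barpartial\psi)^{(1,0)_t}$. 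So the last expression equals $d\,\barpartial\psi=\partial\barpartial\psi$.
\end{itemize}
Thus the curvature is $\omega$, and the formula follows.

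This identity is exact once Maurer--Cartan holds, so your proposed first-order check and induction on $|I|$ are unnecessary. With these additions your proof is complete and considerably shorter than the paper's coordinate computation. It also makes transparent that the gauge condition is used only through weighted divergence-freeness.
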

\begin{proof}[Method of the proof.] This was essentially done in Zhang's thesis \cite{ZhangYY2014}. 
$T^{\prime\ast}X_t$ and $T^{\prime\prime}X_t$ are respectively spanned by
$$e^i := 
dz^i + \varphi^i{}_\barj(t) dz^\barj 
\quad \text{and} \quad 
T_\barj := \frac{\partial}{\partial z^\barj} - \varphi^i{}_\barj(t) \frac{\partial}{\partial z^i},\quad  
 i,\, j = 1, \cdots, m,$$ 
where $z^1, \cdots, z^m$ are local holomorphic coordinates for $X_0$.
Let $w^1, \cdots, w^m$ be local holomorphic coordinates for $X_t$ on the same coordinate neighborhood as $z^i$'s.
These two coordinates are related by
\begin{equation*}
\frac{\partial w^\beta}{\partial z^\barj} = \varphi^i{}_\barj\,\frac{\partial w^\beta}{\partial z^i} \quad \text{and} \quad
\frac{\partial z^i}{\partial w^\barbet} = - \varphi^i{}_\barj\,\frac{\partial z^\barj}{\partial w^\barbet}
\end{equation*}
from which we further obtain
\begin{equation*}
dw^\alpha = \frac{\partial w^\alpha}{\partial z^i}\, e^i
 \quad \text{and} \quad
\frac{\partial}{\partial w^\beta} = \frac{\partial z^j}{\partial w^\beta}\,T_j.
\end{equation*}
By computing
$\frac{\partial w^\alpha}{\partial w^\beta} = \delta^\alpha{}_\beta$ in terms of $z$ coordinates, we obtain
\begin{equation*}
\frac{\partial w^\alpha}{\partial z^i}\, (\delta^i{}_j - (\varphi\barvarphi)^i{}_j)\, \frac{\partial z^j}{\partial w^\beta} = \delta^\alpha{}_\beta.
\end{equation*}
We put 
$$ A = \left(a^\alpha{}_i\right), \quad a^\alpha{}_i = \frac{\partial w^\alpha}{\partial z^i}, \quad A^{-1} = \left(b^i{}_\alpha\right) $$
so that $b^i{}_\alpha a^\alpha{}_j = \delta^i{}_j$. Then for sufficiently small $t$
\begin{equation*}
A\,(I - \varphi\barvarphi)\,\frac{\partial z}{\partial w} = I, \quad \frac{\partial z^i}{\partial w^\alpha} = ((I - \varphi\barvarphi)^{-1})^i{}_j\,b^j{}_\alpha.
\end{equation*}
We also have
\begin{equation*}
\frac{\partial z^i}{\partial w^\barbet} = - \varphi^i{}_\barj\, \overline{((I - \varphi\barvarphi)^{-1})^j{}_\ell}\, \overline{b^\ell{}_\beta}.
\end{equation*}
Expressing $\frac{\partial}{\partial w^\alpha}$ in terms of $z$ coordinates we also obtain
\begin{equation*}
\frac{\partial}{\partial w^\alpha} = ((I -\varphi\barvarphi)^{-1})^i{}_j\,b^j{}_\alpha\, T_i.
\end{equation*}
The proof of Theorem \ref{deform2} follows from long computations of\\
$$\partial_w\partial_\barw \log \det g(\partial w^\alpha, \partial w^\barbet)$$
in terms of $z$ coordinates using the above formulae.
\end{proof}

Now we consider the weighted solitons on a Fano manifold $X$, i.e. $c_1(X) > 0$.
We regard $2\pi c_1(X)$ as a K\"ahler class.
The K\"ahler form $\omega$ is expressed as
$$ \omega = \sqrt{-1}\, g_{i\barj}\, dz^i \wedge dz^\barj . $$
Let $T (= \mathbb T_r)$ be a real compact torus in the automorphism group $\Aut (X)$, 
and assume that $\omega$ is $T$-invariant. 
Since $X$ is Fano and simply connected
the $T$-action is Hamiltonian with respect to $\omega$. 
Since the $T$-action naturally lifts to the anti-canonical line bundle
$K_X^{-1}$ we have a canonically normalized moment map $\mu_\omega : X \to \frak t^\ast$,
see Lemma 3.2 in \cite{futaki04}. 
Let
$\Delta := \mu_{\omega}(X)$ be the moment polytope. 
Then $\Delta$ is independent of $\omega \in 2\pi c_1(X)$.
Let $v$ be a positive smooth function on $\Delta$. 
Regarding $\mu$ as coordinates on $\Delta$ using the action angle
coordinates, we may sometimes write $v(\mu)$ instead of $v$. 
The pull-back $\mu^\ast_\omega v$ is a smooth function on $X$, and
for this we write $v(\mu_\omega)=\mu^\ast_\omega v = v \circ \mu_\omega$. 
We say that a K\"ahler metric $\omega$ in $2\pi c_1(X)$ a weighted $v$-soliton or simply $v$-soliton if
$$ \Ric(\omega) - \omega = \sqrt{-1} \partial\barpartial \log v(\mu_\omega)$$ 
where $\Ric(\omega) = -i \partial\barpartial \log \omega^m$ is the Ricci form.
We also call $\omega$ simply a weighted soliton
when it is $v$-soliton for some $v$, or when $v$ is obvious from the context.

\begin{example}
Examples of weighted solitons are as follows (\cite{Inoue19}, \cite{Inoue22}, \cite{Lahdili18}, \cite{ACL21}, \cite{AJL21}, \cite{HanLi23}, 
\cite{LiChi21}).
\begin{enumerate}
\item[(1)] $v(\mu) = e^{\langle\mu,\xi\rangle}$ for some $\xi \in \frak t$ induces a K\"ahler-Ricci soliton,
 \item[(2)] $v(\mu) = \langle\mu, \xi\rangle + a$ for some positive constant $a$ induces a Mabuchi soliton,
\item[(3)] $v(\mu) = \lb \langle\mu,\xi\rangle + a\rb^{-m-2}$, $m=\dim_\bfC X$, induces a Sasaki-Einstein metric on the $U(1)$-bundle of $K_X$.
\end{enumerate}
\end{example}

The Kuranishi family we consider is described (and also has been described is \eqref{Kura}) 
by a family of vector valued $1$-forms 
parametrized by $t \in B$
$$\varphi(t) = \sum_{i=1}^k t^i\varphi_i + \sum_{|I|\ge2}t^I\varphi_I\ \in\ A^{0,1}(T^\prime X)$$
such that 
\begin{equation}\label{Kura2}
\begin{cases}
 \barpartial\varphi(t) = \frac12 [\varphi(t),\varphi(t)];\\
 \barpartial^\ast_f\, \varphi(t) = 0;\\
 \varphi_1, \cdots, \varphi_k\ \text{form a basis of the space of all}\\
 \qquad\qquad T^\prime X\text{-valued}\ \Delta_f\text{-harmonic}\ (0,1)\text{-forms}\\
\end{cases}
\end{equation}
where $\Delta_f = \barpartial_f^\ast\barpartial + \barpartial\,\barpartial_f^\ast$ is the weighted Hodge Laplacian with
$\barpartial_f^\ast$ the formal adjoint of $\barpartial$ with respect to the weighted $L^2$-inner product
$\int_X (\cdot,\cdot) e^f \omega^m$.
Recall by Theorem \ref{deform1}, the K\"ahler form 
$\omega$ on $X_0 = X$ remains to be a K\"ahler form
on $X_t$ for all nearby $t$. 
\begin{theorem}[= Theorem \ref{Main Thm}, \cite{futaki24PAMQ}]
Suppose that $X_0$ has a weighted $v$-soliton. 
Consider the Kuranishi family with $f=\log v(\mu_\omega)$ as above. 
Then, shrinking $B$ if necessary,  the following statements are equivalent.
\begin{enumerate}
\item[(1)] $X_t$ has a weighted $v$-soliton for all $t\in B$.
\item[(2)] $T$ is included in $\Aut(X_t)$, and for the centralizer $\Aut^T(X_t)$ of $T$ in $\Aut(X_t)$, $\dim \Aut^T(X_t) = \dim \Aut^T(X_0)$ for all $t \in B$.
\item[(3)] $T$ is included in $\Aut(X_t)$, and the identity component $\Aut_0^T(X_t)$ of $\Aut^T(X_t)$ is isomorphic to $\Aut_0^T(X_0)$ for all $t \in B$.
\end{enumerate}
\end{theorem}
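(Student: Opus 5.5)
We will prove (1)$\Rightarrow$(2)$\Rightarrow$(3)$\Rightarrow$(1), following the strategy of Cao-Sun-Yau-Zhang \cite{CaoSunYauZhang2022} for the K\"ahler-Einstein case. The new input is the weighted Hodge theory of Theorems \ref{eigen} and \ref{eigen2}, applied with $f=\log v(\mu_\omega)$.

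\emph{(1)$\Rightarrow$(2).} Suppose $X_t$ carries a $v$-soliton $\omega_t$. The weight $v$ is defined on the moment polytope of $T$, so $T$ must act holomorphically on $X_t$. We will make this explicit via the Kuranishi slice. Because $\omega$ and $f$ are $T$-invariant, $\Delta_f$ commutes with $T$, so $T$ acts on the harmonic space and on $B$. The $v$-soliton condition only makes sense when $T\subset\Aut(X_t)$, and this forces $t\in B^T$. We will use the weighted Matsushima-type theorem for $v$-solitons, which generalizes Theorem \ref{FZ18} and Corollary \ref{FZ18-2}. It says that $\mathfrak h^T(X_t)$ is the complexification of the Killing fields commuting with $T$, and these are identified with the kernel of a weighted operator $\Delta_{f_t}+1$ acting on $T$-invariant functions. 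Upper semicontinuity of $\dim\mathfrak h^T(X_t)$ in $t$ gives $\le$. For $\ge$ we will use that the $v$-solitons $\omega_t$ converge to $\omega$ as $t\to0$ (by uniqueness modulo $\Aut^T_0$ and the openness of the stability condition). The eigenfunctions of eigenvalue $1$ then deform continuously, so the dimension cannot drop near $0$.

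\emph{(2)$\Rightarrow$(3).} We will show that $\mathfrak h^T(X_t)$ arises from the eigenspace of $\Delta_{f_t}$ near eigenvalue $1$, using the Ricci potential $f_t=f+\log\det(1-\varphi\barvarphi)$ of Theorem \ref{deform2}. By Theorem \ref{eigen2}(2), eigenvalue-$1$ functions give holomorphic gradient fields. If the dimension is constant, these eigenspaces form a smooth vector bundle over $B$, and the Lie bracket varies continuously. Since $\mathfrak h^T(X_0)$ is reductive, rigidity of reductive Lie algebras under deformation (vanishing of $H^2$ of the Lie algebra, Richardson/Nijenhuis-Richardson) gives $\mathfrak h^T(X_t)\cong\mathfrak h^T(X_0)$. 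This integrates to an isomorphism of identity components after shrinking $B$.

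\emph{(3)$\Rightarrow$(1).} This is the main obstacle. Since $\Aut^T_0(X_t)\cong\Aut^T_0(X_0)$ is reductive, there is a maximal torus $\mathbb T'\supset T$ of the same rank. We will use the valuative criterion of Han-Li \cite{HanLi23} and Blum-Liu-Xu-Zhuang \cite{BLXZ}: a $v$-soliton exists if and only if the weighted $v$-Futaki invariant vanishes on $\mathfrak h^T$ and a reduced weighted delta invariant satisfies $\delta^{\mathrm{red}}_{v,\mathbb T'}>1$. The delta condition is open in families for $\mathbb T'$-equivariant deformations, using lower semicontinuity of $\delta$ and its reduced equivariant analogue; it holds at $t=0$ because $X_0$ has a soliton. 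Vanishing of the weighted Futaki invariant on $\mathfrak h^T(X_t)$ follows from two facts. First, the Futaki invariant is constant in continuous families of isomorphic Lie algebras. Second, the torus part of $\mathfrak h^T(X_t)$ is conjugate to that of $X_0$, where the invariant vanishes.

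The delicate point is this openness of $\delta^{\mathrm{red}}$ for the twisted, weighted setting along a Kuranishi family that is only locally analytic. If openness fails to be available, the alternative is an implicit function theorem argument. In that approach we would solve $\Ric_t(\omega_\psi)-\omega_\psi=\sqrt{-1}\partial_t\barpartial_t\log v(\mu_{\omega_\psi})$ for $T$-invariant $\psi$ near $0$. The linearization is $\Delta_f+1$, and its cokernel is exactly $\mathfrak h^T(X_0)$. The hypothesis (3) is then used, through a Lyapunov-Schmidt reduction, to show that the obstruction map vanishes identically.
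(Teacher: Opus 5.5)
The missing idea, on which both directions of the paper's proof turn, is that $G=\Isom_0^T(X_0,\omega)$, with $G^{\bfC}=\Aut^T_0(X_0)$, acts \emph{trivially} on the Kuranishi base $B$. In (1)$\Rightarrow$(2) you try to get $\dim\Aut^T(X_t)\ge\dim\Aut^T(X_0)$ from $\omega_t\to\omega$ and continuity of eigenfunctions. But continuity of the spectrum of $\Delta_{f_t}$ only yields upper semicontinuity of the multiplicity of the eigenvalue $1$. By Theorem~\ref{eigen2} the nearby eigenvalues are $\ge1$, and they can move strictly above $1$. To my knowledge this does happen: near the Mukai--Umemura threefold, K\"ahler-Einstein deformations with finite automorphism group accumulate at $X_0$. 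So an argument that uses solitons only along $t\to0$ cannot work; you must use that $X_t$ has a $v$-soliton for \emph{every} $t$ near $0$. The paper does this as in \cite{CaoSunYauZhang2022}. If $G$ acted nontrivially on $B$, some $X_t$ would admit a non-product test configuration with central fibre $X_0$. Its Donaldson-Futaki invariant vanishes because $X_0$ carries a $v$-soliton, which contradicts the weighted K-polystability of $X_t$ (\cite{HanLi23}, \cite{BLXZ}, \cite{LiChi21}). Hence $G$ preserves $\omega$ and every $\varphi(t)$, so $T\subset G^{\bfC}\subset\Aut^T_0(X_t)$. Upper semicontinuity of $\dim H^0(X_t,T'X_t)^T$ then gives (3) directly.

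Your other two steps have the same hole.

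In (2)$\Rightarrow$(3), rigidity via $H^2(\mathfrak g,\mathfrak g)=0$ holds for semisimple Lie algebras but not for general reductive ones; for instance, abelian $\bfC^2$ deforms to the non-abelian two-dimensional algebra. Nothing in your argument excludes $\mathfrak h^T(X_0)$ having a centre of dimension $\ge2$ beyond $\mathfrak t_{\bfC}$. So a continuous family of brackets of constant dimension need not stay isomorphic to $\mathfrak h^T(X_0)$. The paper instead proves (the Lemma in the last section, after \cite{CaoSunYauZhang2022}) that constancy of $\dim\Aut^T(X_t)$ forces $\Aut^T_0(X_0)$ to act trivially on $H^1(X_0,T'X_0)\cong T_0B$, hence on $B$. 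It uses this for (2)$\Rightarrow$(1).

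In your (3)$\Rightarrow$(1), openness of $\delta^{\mathrm{red}}>1$ in the weighted, twisted setting is not established. It cannot be a soft fact either, since K-polystability is not an open condition. Both this openness and your claim that the Futaki invariant is ``constant in families'' presuppose a torus acting on the whole family, which is again the trivial-action statement. Your implicit-function fallback is essentially the paper's route, but you leave open how the obstruction is killed. In the paper, trivial action lets $\TildeT$ act on the family, so Proposition~\ref{extension} gives weighted extremal metrics on $X_t$. Then \eqref{Lich7}, which comes from the Ricci potential formula of Theorem~\ref{deform2}, shows $\Fut_{v,w}=0$ on $X_t$: automorphisms of $X_t$ preserve $\varphi(t)$, so $JX$ annihilates $\log\det(I-\varphi(t)\overline{\varphi(t)})$. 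By Remark~\ref{Lich5.1} these extremal metrics are therefore $v$-solitons.
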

The case when $X_0$ is a K\"ahler-Einstein manifold and $T = \{1\}$ is due to
Cao-Sun-Yau-Zhang \cite{CaoSunYauZhang2022}, and our proof is
largely parallel to theirs.
\begin{proof}[Outline of the proof.] 
Just as the notion of K\"ahler-Einstein metrics are
generalized 
to constant scalar curvature K\"ahler (cscK for short) metrics 
and further to Calabi extremal K\"ahler metrics \cite{calabi85}, 
the notion of 
weighted solitons are generalized
to Lahdili's weighted cscK metrics \cite{Lahdili18}
and further to weighted extremal metrics. The following three steps are
extensions of the proof in \cite{CaoSunYauZhang2022} for the K\"ahler-Eisntein case to the weighted soliton case.

{\it Step 1.}\ \ 
A weighted extremal metric on $X_0$ extends to nearby $X_t$ 
if the maximal torus $\TildeT \subset \Aut_r^T(X_0)$ acts on $B$ trivially.
c.f. Rollin-Simanca-Tipler \cite{RST}.

{\it Step 2.}\ \ 
The Ricci potential formula in Theorem \ref{deform2} implies that the weighted version of the invariant $\mathrm{Fut}_{v,w}$
given in \eqref{Lich5} below vanishes.
Thus the weighted extremal metric on $X_t$ in Step 1 is
in fact a weighted soliton. Thus the implication 
(2) $\Longrightarrow$ (1) is proved.

{\it Step 3.}\ \ In the K\"ahler-Einstein case of  \cite{CaoSunYauZhang2022}, they show if (3) does not hold then
there is a test configuration for $X_0 = X$ which is not product. Then this implies $X_0$ cannot have a K\"ahler-Einstein
metric by the solution of the Yau-Tian-Donaldson conjecture by \cite{CDS3}, \cite{Tian12}, \cite{DatarSzeke16}, \cite{CSW},
\cite{BBJ}, \cite{LiChi22}, \cite{LXZ22}, i.e. the 
K-polystability characterization of the existence of K\"ahler-Einstein metrics; see the Introduction of the present survey.
Further, these results were extended to weighted solitons by 
 Han-Li \cite{HanLi23}, Blum-Liu-Xu-Zhuang \cite{BLXZ}, Li \cite{LiChi21}. Thus the same proof as \cite{CaoSunYauZhang2022}
 can be applied to the case of weighted solitons. Thus 
(1) $\Longrightarrow$ (3) is obtained.

{\it Step 4.}\ \ 
(3) $\Longrightarrow$ (2) is trivial.
\end{proof}

More details are given in the next two section.

\section{Weighted scalar curvature, or $(v,w)$-scalar curvature by Lahdili.}

Let $X$ be a compact K\"ahler manifold, $\Omega$ its K\"ahler class, and 
 $\Aut_r(X) \subset \Aut(X)$ the reduced automorphism group, 
 i.e. the Lie algebra $\frak h_r(X)$ of $\Aut_r(X)$
consists of holomorphic vector fields with non-empty zeros.
They are obtained in the form $\grad'u$ for $u \in C^\infty_{\bfC}$.
Let $T (= \mathbb T_r)$ be a compact real torus in $\Aut_r(X)$. 
Let $\omega \in \Omega$ be a $T$-invariant K\"ahler form. 
Then $T$ acts on $(X,\omega)$ in the Hamiltonian way. 

Let $\mu_\omega : X \to \frak t^\ast$ be the moment map. 
Then $\Delta:=\mu_\omega(X)$ is a compact convex polytope.
This is independent of $\omega \in \Omega$ up to translation.
Let $v$ be a positive smooth function on $\Delta$. 
Write $v = v(\mu)$ as a function on $\Delta$, 
$v(\mu_\omega) := \mu_\omega^\ast v$ as a positive smooth function on $X$.

Define $v$-scalar curvature $S_v(\omega)$ of a $T$-invariant K\"ahler form $\omega$ by
$$ S_v(\omega):= v(\mu_\omega) S(\omega) + 2 \Delta_\omega v(\mu_\omega) 
+ \langle g_\omega,\mu_\omega^\ast Hess(v)\rangle$$
where $S(\omega)$ denotes the K\"ahler geometers' scalar curvature 
$$ S(\omega) = - g^{i\barj} \frac{\partial^2 }{\partial z^i \partial z^\barj} \log \det (g_{l\barl})$$
of $\omega$, and where the Hessian $Hess(v)$ of $v$ is on $\frak t^\ast$  and
$\mu_\omega(p) = (\mu^1(p), \cdots, \mu^\ell(p))$ with $d\mu^\alpha = i(X^\alpha)\omega$ for a basis $X^1, \cdots, X^\ell$ of $\frak t$ so that
$$\langle g_\omega,\mu_\omega^\ast Hess(v)\rangle = g^{i\barj} v_{\alpha\beta}\mu^\alpha_i\mu^\beta_\barj.$$

Let $w$ be another positive smooth function on $\Delta$.
Define $(v,w)$-scalar curvature $S_{v,w}$ by
$$ S_{v,w} = \frac{S_v}{w(\mu_\omega)}.$$
We say $g$ is weighted cscK if $S_{v,w}$ is constant.

The notion of $(v,w)$-scalar curvature was originally introduced in \cite{Lahdili18} as a generalization of
conformally K\"ahler, Einstein-Maxwell metrics \cite{LeBrun16}, \cite{AM}.
Later it turned out that 
the $(v,w)$-cscK metrics include much more unexpected examples such as weighted solitons \cite{ACL21}, \cite{AJL21}.

We call $g$ a weighted extremal metric or $(v,w)$-extremal metric if 
$$\grad'S_{v,w} = g^{i\barj}\frac{\partial S_{v,w}}{\partial z^\barj}\frac{\partial}{\partial z^i}$$
is a holomorphic vector field.

Define $L_v\varphi$ for complex valued smooth functions $\varphi$ by
$$ L_v \varphi = \nabla^i\nabla^j(v(\mu_\omega)\nabla_i\nabla_j \varphi),$$
and call $L_v$ the $v$-twisted Lichnerowicz operator. Obviously, 
$$ \int_X (L_v\varphi)\, \barpsi\, \omega^m =  \int_X \varphi\, \overline{L_v \psi}\, \omega^m$$
and
$$\Ker\, L_v  \cong \frak h_r(X) \cong Lie(\Aut_r(X)).$$ 
We also define $L_{v,w}$ by
$$ L_{v,w} = \frac 1{w(\mu_\omega)}\, L_v.$$
Then, for $g_{ti\barj} = g_{i\barj} + t\varphi_{i\barj}$ we have
\begin{equation*}\label{Lich1}
\left.\frac d{dt}\right|_{t=0} S_v(g_t) = - L_v \varphi + S_v^i\,\varphi_i,
\end{equation*}
\begin{equation*}\label{Lich2}
\left.\frac d{dt}\right|_{t=0} S_{v,w}(g_t) = - L_{v,w} \varphi + S_{v,w}^i\,\varphi_i.
\end{equation*}
\begin{proposition}\label{Lich3}
A critical point of the weighted Calabi functional 
$$g \mapsto \int_X S_{v,w}^2 (g)\,w(\mu_{\omega})\, \omega^m$$ 
is a weighted extremal metric.
\end{proposition}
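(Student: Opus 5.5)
We compute the first variation of the weighted Calabi functional
$$\mathcal C(g) = \int_X S_{v,w}^2(g)\, w(\mu_\omega)\,\omega^m$$
along the path $g_{t,i\barj} = g_{i\barj} + t\varphi_{i\barj}$, with $\varphi$ a real $T$-invariant smooth function. We then show that vanishing of this variation for all such $\varphi$ forces $\grad' S_{v,w}$ to be holomorphic.

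\textbf{Step 1: differentiate the three factors.} Under the variation:
\begin{itemize}
\item $S_{v,w}$ changes by $-L_{v,w}\varphi + S_{v,w}^i\varphi_i$, by the formula stated just before the proposition.
\item The volume form changes by $\frac{d}{dt}\omega_t^m = (\Delta\varphi)\,\omega^m$, up to the normalization convention for $\Delta$.
\item The moment map changes by $\frac{d}{dt}\mu^\alpha_{\omega_t} = -\tfrac12 J X^\alpha\varphi$, or an equivalent expression $X^\alpha$-derivative of $\varphi$ depending on conventions. This is standard for the canonically normalized moment map. Hence
$$\frac{d}{dt} w(\mu_{\omega_t}) = w_\alpha\, d\varphi(J X^\alpha).$$
\end{itemize}
Combining these, the weighted volume satisfies
$$\frac{d}{dt}\bigl(w(\mu_{\omega_t})\,\omega_t^m\bigr) = \nabla_i\bigl(w(\mu_\omega)\,\nabla^i\varphi\bigr)\,\omega^m,$$
which is the weighted Laplacian of $\varphi$. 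The key point is that the $T$-invariance of $\varphi$ turns the moment-map variation into the gradient term.

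\textbf{Step 2: assemble and integrate by parts.} Writing $S = S_{v,w}$, we get
$$\frac{d}{dt}\Big|_{t=0}\mathcal C = \int_X \Bigl(2S\bigl(-L_{v,w}\varphi + S^i\varphi_i\bigr)\, w + S^2\,\nabla_i(w\nabla^i\varphi)\Bigr)\,\omega^m.$$
We treat the two kinds of terms separately.
\begin{itemize}
\item The first-order terms cancel. Integrating the last term by parts gives $-\int_X 2S\, S_i\, w\,\varphi^i\,\omega^m$, which cancels $\int_X 2S\,S^i\varphi_i\, w\,\omega^m$ because both are real and $\varphi$ is real.
\item What remains is $-2\int_X S\,(L_v\varphi)\,\omega^m$, using $wL_{v,w} = L_v$. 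By the self-adjointness of $L_v$ recorded before the proposition, this equals $-2\int_X (L_v S)\,\varphi\,\omega^m$.
\end{itemize}

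\textbf{Step 3: conclude.} At a critical point this integral vanishes for all $T$-invariant real $\varphi$. Since $S$ is itself $T$-invariant, $L_v S$ is $T$-invariant, and so $L_v S = 0$. Pairing with $S$ gives
$$0 = \int_X \overline{S}\, L_v S\,\omega^m = \int_X v(\mu_\omega)\,|\nabla''\nabla'' S|^2\,\omega^m,$$
where $\nabla''\nabla'' S$ denotes the $(0,2)$-part $\nabla_{\barj}\nabla_{\bark} S$. Because $v>0$, this forces $\nabla_\barj\nabla_\bark S = 0$. This is precisely the statement that $\grad' S_{v,w} = g^{i\barj}S_\barj\,\partial_i$ is holomorphic (equivalently, $S\in\Ker L_v\cong\frak h_r(X)$). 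Hence $g$ is weighted extremal.

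\textbf{Main obstacle.} The delicate step is the moment-map variation in Step 1. One must check that, with the canonical normalization, $\dot\mu^\alpha$ is exactly $-\tfrac12 J X^\alpha\varphi$ (or $\langle d\varphi, JX^\alpha\rangle$ with the right constant), so that the combined variation of $w(\mu)\,\omega^m$ is an exact divergence. The signs and constants must match those in the variation formula for $S_{v,w}$ for the first-order terms to cancel. If the constants do not match, we would instead absorb them by redefining the relevant normalization of $\Delta$. The conclusion is insensitive to this, provided the total variation of the weighted volume is a divergence; this holds because $\int_X w(\mu_\omega)\,\omega^m$ is independent of $\omega$ in the class.
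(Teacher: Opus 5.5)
Your proposal is correct. It is the same first-variation argument the paper relies on; the paper writes no separate proof, only the formula $\frac{d}{dt}S_{v,w}=-L_{v,w}\varphi+S_{v,w}^i\varphi_i$ stated just before the proposition. The weighted volume varies by $\nabla_i(w\nabla^i\varphi)\,\omega^m$, the gradient terms cancel, and self-adjointness of $L_v$ gives $L_vS_{v,w}=0$, so $\grad' S_{v,w}$ is holomorphic.

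The one thing to fix is the fallback in your last paragraph. Constancy of $\int_X w(\mu_\omega)\,\omega^m$ only says the variation has mean zero, and that alone would not produce the cancellation. What you need, and what holds independently of conventions, is the pointwise identity $\dot\mu^\alpha=g^{i\barj}\varphi_i\,\mu^\alpha_{\barj}$. It follows from $\partial_{\barj}\mu^\alpha_t=(g_{i\barj}+t\varphi_{i\barj})\xi^i$ for the fixed holomorphic field $\xi=\grad'\mu^\alpha$. This quantity is real because $\varphi$ is $T$-invariant. It carries no additive constant because it vanishes at the $T$-fixed points, so the moment polytope is unchanged.
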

\begin{proposition}\label{Lich3.1}
Let $h_X \in \Ker L_v$ be the real Killing potential of $X \in \frak t$, 
i.e. $i \grad'h_X = X'$.
Then $\Fut_v$ and $\Fut_{v,w}$ defined by
\begin{equation}\label{Lich4}
 \Fut_v (X) = \int_X (S_v - c_v)\, h_X\,\omega^m,
 \end{equation}
 and 
 \begin{equation}\label{Lich5}
 \Fut_{v,w} (X) = \int_X (S_{v,w} - c_{v,w})\, h_X\,w(\mu_\omega)\,\omega^m
 \end{equation}
are independent of choice of $\omega \in \Omega$ where 
$c_{v,w} = \int_X S_v\, \omega^m/\int_X w(\mu_\omega)\, \omega^m$ 
and $c_{v} = c_{v,1}$
which
are independent of $\omega \in \Omega$. 
\end{proposition}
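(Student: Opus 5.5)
The statement to prove is Proposition~\ref{Lich3.1}: $c_{v,w}$ and $\Fut_{v,w}$ do not depend on the choice of $\omega \in \Omega$. The plan is the classical Futaki--Calabi argument, adapted to the weights. Two $T$-invariant K\"ahler forms in $\Omega$ are joined by the path $\omega_t=\omega+t\sqrt{-1}\partial\barpartial\varphi$ with $\varphi$ $T$-invariant, so it suffices to show that $\frac{d}{dt}c_{v,w}=0$ and $\frac{d}{dt}\Fut_{v,w}(X)=0$ along such a path. The case of $\Fut_v$ and $c_v$ is then $w\equiv 1$.

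\textbf{Variations of the ingredients.} Along the path:
\begin{itemize}
\item the volume form varies by $\frac{d}{dt}\omega_t^m=(\Delta\varphi)\,\omega_t^m$ (up to the normalising sign of $\Delta$);
\item with the canonical normalisation, the moment map varies by $\dot\mu^\alpha = d^c\varphi(X^\alpha)$, i.e.\ $\dot\mu^\alpha=\langle \nabla\mu^\alpha,\nabla\varphi\rangle$ up to a factor $J$;
\item consequently, for any smooth $u$ on $\Delta$, $\frac{d}{dt}\,u(\mu_{\omega_t}) = \langle \nabla u(\mu_\omega),\nabla\varphi\rangle$;
\item in particular the Killing potential $h_X=\langle\mu_\omega,X\rangle$ satisfies $\dot h_X=\langle\nabla h_X,\nabla\varphi\rangle = X(\varphi)$ (real part), the standard formula;
\item $\dot S_{v,w} = -L_{v,w}\varphi + S_{v,w}^i\varphi_i$, which is the formula displayed just before Proposition~\ref{Lich3}.
\end{itemize}
From these, $\frac{d}{dt}\big(u(\mu_\omega)\,\omega^m\big) = -\partial^i\!\big(u(\mu_\omega)\varphi_i\big)\,\omega^m$ up to sign, which is a divergence. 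Hence $\int_X u(\mu_\omega)\,\omega^m$ is independent of $\omega$ for every $u$. This is the Duistermaat--Heckman fact that the pushforward measure on $\Delta$ is fixed.

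\textbf{Constant $c_{v,w}$.} The denominator $\int_X w(\mu_\omega)\,\omega^m$ is constant by the previous step. For the numerator, $\int_X S_v\,\omega^m$, I would use Lahdili's expression: integrating by parts gives $\int_X S_v\,\omega^m=\int_X \big(v(\mu_\omega)\,\mathrm{Ric}(\omega)\wedge\omega^{m-1}\cdot m + \text{terms in } dv\big)$. Equivalently, I would differentiate directly:
\[
\frac{d}{dt}\int_X S_v\,\omega^m=\int_X\big(-L_v\varphi + S_v^i\varphi_i + S_v\Delta\varphi\big)\,\omega^m .
\]
Here $\int_X L_v\varphi\,\omega^m=\int_X \varphi\, L_v(1)\,\omega^m=0$ because $L_v$ is self-adjoint and kills constants, and the last two terms cancel after integration by parts.

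\textbf{Invariance of $\Fut_{v,w}$.} Write
\[
F(t)=\int_X \big(S_v - c_{v,w}\,w(\mu_\omega)\big)\,h_X\,\omega^m ,
\]
which equals $\Fut_{v,w}(X)$. Differentiating,
\[
F'(t)=\int_X\Big[\big(-L_v\varphi + S_v^i\varphi_i\big)h_X - c_{v,w}\langle\nabla w,\nabla\varphi\rangle h_X + \big(S_v-c_{v,w}w\big)\big(X\varphi + h_X\Delta\varphi\big)\Big]\,\omega^m .
\]
The $L_v$ term vanishes because $\int_X (L_v\varphi)\,h_X\,\omega^m=\int_X\varphi\,L_v h_X\,\omega^m$ and $h_X\in\Ker L_v$. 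The remaining terms organise as
\[
\int_X \Big[h_X\, \partial^i\!\big(\varphi_i\,(S_v-c_{v,w}w)\big) + (S_v-c_{v,w}w)\,\varphi_i\, h_X^{\,i}\Big]\,\omega^m ,
\]
which is $\int_X \partial^i\big(h_X\,\varphi_i\,(S_v-c_{v,w}w)\big)\,\omega^m=0$.

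\textbf{Main obstacle.} The key identity to verify is $\frac{d}{dt}S_v=-L_v\varphi+S_v^i\varphi_i$ together with the variation of $v(\mu_\omega)$, with consistent signs and normalisations. The Hessian term $\langle g,\mu^*\mathrm{Hess}\,v\rangle$ makes the bookkeeping delicate. I would take the variation formula from the paper as given and only check the cancellation $S_v\Delta\varphi + S_v^i\varphi_i = \text{divergence}$ (and its analogue for $w$ with the $h_X$ factor) carefully.
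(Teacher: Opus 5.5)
The survey states Proposition~\ref{Lich3.1} without proof; it is quoted from Lahdili's work \cite{Lahdili18}, so there is no in-paper argument to compare against. Your first-variation argument is the standard one, and it is correct.

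With the paper's conventions $g_{t,i\barj}=g_{i\barj}+t\varphi_{i\barj}$ and the canonical normalisation of $\mu_{\omega_t}$, one has exactly
$\frac{d}{dt}\omega_t^m=(g^{i\barj}\varphi_{i\barj})\,\omega_t^m$ and $\frac{d}{dt}\,u(\mu_{\omega_t})=g^{i\barj}\partial_{\barj}(u\circ\mu_{\omega_t})\,\varphi_i$.
Self-adjointness of $L_v$, with $L_v1=0$ and $L_vh_X=0$, removes the $L_v$-terms. After that, both $\frac{d}{dt}\int_X S_v\,\omega_t^m$ and your $F'(t)$ are integrals of $g^{i\barj}\partial_{\barj}Y_i$, where $Y_i=S_v\varphi_i$ and $Y_i=h_X\bigl(S_v-c_{v,w}w(\mu_\omega)\bigr)\varphi_i$ respectively. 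Hence both vanish, and the signs in your final combination are right.

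Three small points:

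\begin{itemize}
\item Your gloss ``$\dot h_X=X(\varphi)$ (real part)'' is wrong as written. For the real field $X\in\mathfrak t$ one has $X\varphi=0$ by $T$-invariance. For $X'=\sqrt{-1}\,\grad' h_X$ one has $X'\varphi=\sqrt{-1}\,h_X^{\,i}\varphi_i$, which is purely imaginary, so its real part is $0$. The correct formula is $\dot h_X=h_X^{\,i}\varphi_i=(\grad' h_X)\varphi$, a real multiple of $(JX)\varphi$. This is what you actually substitute in the final divergence, so nothing downstream changes. The reality of $h_X^{\,i}\varphi_i$, and of $(u\circ\mu)^i\varphi_i$, is exactly where the $T$-invariance of $\varphi$ enters.

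\item You should note that $v(\mu_{\omega_t})$ and $w(\mu_{\omega_t})$ are defined along the whole path. With the canonical normalisation, $\mu_{\omega_t}(X)=\Delta$ for all $t$: at $T$-fixed points every element of $\mathfrak t$ vanishes, so $\dot\mu=0$ there, and by the convexity theorem the image is the convex hull of these fixed images.

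\item All the genuinely weighted content sits in the displayed formula $\frac{d}{dt}S_v=-L_v\varphi+S_v^i\varphi_i$. It already absorbs the variation of $\mu_\omega$ inside $v(\mu_\omega)$, $\Delta_\omega v(\mu_\omega)$ and the Hessian term. Taking it from the paper is legitimate (it is Lahdili's variation lemma), but a self-contained proof would have to verify that computation.
\end{itemize}
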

\begin{remark}\label{Lich5.1} If $g$ is a $(v,w)$-extremal metric with non-constant $S_{v,w}$ then
$$ \Fut_{v,w}(J\grad S_{v,w}) = \int_X (S_{v,w} - c_{v,w})^2\, w(\mu_\omega)\,\omega^m > 0.$$
\end{remark}
\begin{remark}Calabi's decomposition theorem for extremal K\"ahler metrics holds for weighted extremal K\"ahler metrics.
In particular, the existence of a weighted cscK metric implies 
$$\Aut_{r}^T(X)_{id} = (\Isom_r^T(X)_{id})^\bfC$$
which is a weighted version of Lichnerowicz-Matsushima Theorem.
\end{remark}

Let $\varpi : \mathcal X \to B$ a complex analytic family of complex deformations with $X_0 = X$ where $B$ is an open set in $\bfC^k$ containing $0$ and we put $X_t :=\varpi^{-1}(t)$. 
We assume $(X_0,g_0)=(X,g)$ is a compact $(v,w)$-extremal K\"ahler manifold.
Let $\TildeT$ be the maximal torus in $\Aut_r (X)$ including $T$. 
Suppose that $\TildeT$ acts holomorphically on $\mathcal X \to B$ and trivially on $B$.
Thus $\TildeT$ acts on $X_t$ holomorphically for each $t \in B$.
By the rigidity theorem of Kodaira-Spencer, $X_t$ is K\"ahler for all small $t$. 
Let $\Omega_t$ be a smooth family of K\"ahler classes of $X_t$.
\begin{proposition}[\cite{futaki24PAMQ}, also \cite{RST}, \cite{lebrunsimanca94}]\label{extension}
In the above situation, by shrinking $B$
to a sufficiently small neighborhood of the origin if necessary, for arbitrary small perturbations 
$\Omega_t$ of the K\"ahler class $\Omega=\Omega_0$, there are weighted extremal metrics $g_t$ in $\Omega_t$.
\end{proposition}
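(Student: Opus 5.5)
The plan is to run the LeBrun--Simanca / Rollin--Simanca--Tipler argument: an implicit function theorem for the weighted extremal equation, restricted to $\TildeT$-invariant data, with the finite-dimensional kernel handled by modifying the target equation. First fix a $\TildeT$-invariant smooth family of K\"ahler forms $\omega_t \in \Omega_t$ on $X_t$ with $\omega_0=\omega$. This is possible because $\TildeT$ is compact and acts fibrewise holomorphically: average any smooth family over $\TildeT$. Since $T\subset\TildeT$, each $\omega_t$ is $T$-invariant, and the moment maps $\mu_{\omega_t}$ can be normalized continuously in $t$. After a translation, the images $\mu_{\omega_t}(X_t)$ coincide with $\Delta$ by the Duistermaat--Heckman/convexity argument, because the fixed-point data vary continuously. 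Hence $v(\mu_{\omega_t})$ and $w(\mu_{\omega_t})$ make sense.

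Next, by the Ehresmann theorem, identify all the $X_t$ with the underlying manifold of $X_0$ $\TildeT$-equivariantly, so that everything lives on a fixed Banach space. Use $C^{k+4,\alpha}$ $\TildeT$-invariant real functions $\phi$ modulo constants, and write $\omega_{t,\phi}=\omega_t+\sqrt{-1}\partial_t\barpartial_t\phi$. Let $\frak h_{\TildeT}$ be the space of real Killing potentials of $\tilde{\frak t}=\mathrm{Lie}(\TildeT)$. Normalizing these with respect to $\omega_{t,\phi}$ gives potentials $h_{t,\phi}(X)$ for $X\in\tilde{\frak t}$, depending smoothly on $(t,\phi)$; this uses that the $\TildeT$-action is holomorphic on every $X_t$.

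Then consider the map
$$\Psi(t,\phi,X)=S_{v,w}(\omega_{t,\phi})-h_{t,\phi}(X)-c,$$
where $X\in\tilde{\frak t}$ and the constant $c$ is fixed by integrating against $w(\mu)\omega^m$. A zero of $\Psi$ is a weighted extremal metric, because $\grad' h(X)$ is holomorphic. Since $\omega_0$ is $(v,w)$-extremal and $\TildeT$ is maximal, the extremal field $J\grad S_{v,w}$ lies in $\tilde{\frak t}$ by the weighted Calabi decomposition in the Remark above. Therefore $\Psi(0,0,X_0)=0$ for a suitable $X_0$.

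The linearization in $(\phi,X)$ at this point is
$$(\dot\phi,\dot X)\mapsto -L_{v,w}\dot\phi+(\text{terms in } S_{v,w}^i\dot\phi_i)-h(\dot X).$$
On $\TildeT$-invariant functions the first-order term $S_{v,w}^i\dot\phi_i$ vanishes, because $\grad S_{v,w}$ generates an element of $\tilde{\frak t}$ and $\dot\phi$ is invariant. So the operator reduces to $-L_{v,w}\dot\phi - h(\dot X)$. Now $L_v$ is self-adjoint and elliptic of order four. Its kernel on invariant functions consists of Killing potentials $u$ with $\grad' u$ commuting with $\TildeT$; by maximality of $\TildeT$ these lie in $\frak h_{\TildeT}$ plus constants.

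The step I expect to be the main obstacle is surjectivity together with the matching of kernels. Here $L_{v,w}=w^{-1}L_v$ is self-adjoint with respect to $w(\mu)\omega^m$, so its image is the $w$-weighted orthogonal complement of the kernel, i.e. of $\frak h_{\TildeT}$ and the constants. The $\dot X$ direction exactly fills $\frak h_{\TildeT}$, and the constant $c$ absorbs the constants. One must verify that these directions are transverse, and that the kernel of the linearization restricted to $\phi \perp \frak h_{\TildeT}$ is trivial. After that, the implicit function theorem gives a smooth family $(\phi_t,X_t)$ solving $\Psi=0$ for small $t$ and for small perturbations of $\Omega_t$ (which enter only through the choice of $\omega_t$). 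Elliptic regularity then makes $\phi_t$ smooth, and shrinking $B$ keeps $\omega_{t,\phi_t}>0$.
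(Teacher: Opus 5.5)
Your scheme matches the paper's. The survey gives no separate proof of this proposition; it cites the LeBrun--Simanca/Rollin--Simanca--Tipler implicit function theorem on $\TildeT$-invariant potentials, with the kernel of the weighted Lichnerowicz operator absorbed by an extra unknown in $\tilde{\mathfrak t}$, and that is what you set up.

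\textbf{The linearization step is wrong as written.} You define $h_{t,\phi}(X)$ to be normalized with respect to $\omega_{t,\phi}$. But in the linearization you drop its $\phi$-dependence and assert instead that $S^i_{v,w}\dot\phi_i$ vanishes on $\TildeT$-invariant $\dot\phi$. That is false. Invariance only kills the imaginary part, which is a multiple of $(J\grad S_{v,w})\dot\phi$. The real part is a multiple of $\langle dS_{v,w},d\dot\phi\rangle$, and it does not vanish. For the invariant function $\dot\phi=S_{v,w}$ you get $S^i_{v,w}\dot\phi_i=|\partial S_{v,w}|^2\neq0$ unless $g_0$ is already weighted cscK.

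What actually happens is a cancellation. Let $\xi_0$ be the extremal field (what you call $X_0$), $h$ its potential, and $h^k=g^{k\barl}\partial_{\barl}h$, which is holomorphic. The defining relation for the potential with respect to $\omega+\sqrt{-1}\partial\barpartial\phi$ gives
$\partial_{\barl}h_\phi=\partial_{\barl}h+\phi_{k\barl}h^k=\partial_{\barl}(h+h^k\phi_k)$.
So $h_\phi=h+h^k\phi_k+\mathrm{const}$; the imaginary part of $h^k\phi_k$ vanishes by invariance. The $\phi$-derivative of $-h_{t,\phi}(\xi_0)$ therefore contributes $-h^k\dot\phi_k$ modulo constants. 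This cancels $S^i_{v,w}\dot\phi_i$ exactly, because $S_{v,w}=h+c$ at the base point.

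With this correction the linearization is indeed $-L_{v,w}\dot\phi-h(\dot X)-\dot c$, and the rest of your argument goes through. The ``transversality'' you single out as the main obstacle is then automatic. $L_{v,w}$ is self-adjoint for $w(\mu_\omega)\omega^m$, so invariant $C^{k,\alpha}$ functions split $w$-orthogonally as $\mathrm{Im}\,L_{v,w}\oplus\mathfrak h_{\TildeT}\oplus\mathbb R$. In the paper's application $g_0$ is a $v$-soliton, so $S_{v,w}\equiv1$, the extremal field is $0$, and your shortcut is harmless there. The proposition, however, is stated for a general $(v,w)$-extremal $g_0$.

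\textbf{A smaller inaccuracy concerns the moment polytope.} When $\Omega_t$ is a genuine perturbation of $\Omega_0$, the image $\mu_{\omega_t}(X_t)$ is not a translate of $\Delta$; already for toric manifolds the facets move independently as the class changes. You therefore need $v,w$ defined and positive on a neighbourhood of $\Delta$ in $\mathfrak t^\ast$, together with a consistent normalization of the moment maps in $(t,\phi)$. Your translation claim is right only when $\Omega_t$ corresponds to $\Omega_0$ under the equivariant Ehresmann identification, for example $\Omega_t=2\pi c_1(X_t)$ as in the application.
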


\section{Weighted solitons on Fano manifolds.}
Let $X$ be a Fano manifold, and $\omega \in 2\pi c_1(X)$ be a K\"ahler form.
\begin{definition}
Let $v$ be a positive smooth function on the moment map image $\Delta=\mu_\omega(X)$.
We say that $\omega$ is a weighted $v$-soliton (or simply weighted soliton, also $v$-soliton) if
$$ \Ric(\omega) - \omega = i\partial\barpartial \log v(\mu_\omega).$$
\end{definition}
A $T$-invariant K\"ahler form $\omega \in 2\pi c_1(X)$ is a $v$-soliton
if and only if
$\omega$ is $S_{v,w} = 1$ metric with 
\begin{equation*}\label{soliton0}
w(\mu) = (m+\langle d\log v,\mu\rangle)v(\mu). 
 \end{equation*}
 We keep this choice of $w(\mu)$ in what follows.
This can be seen from the formula
\begin{equation}\label{soliton1}
 S_v - w(\mu_\omega) = v(\mu_\omega)\Delta_v(\log v(\mu_\omega) - f)
 \end{equation}
where $f \in C^\infty(X)$ is the Ricci potential of $\omega$, i.e. $ \Ric(\omega) - \omega = i\partial\barpartial f$, and $\Delta_v = v^{-1}\circ \barpartial^\ast \circ v \circ \barpartial$, i.e.
$v(\mu_\omega)$-weighted Laplacian.
By \eqref{soliton1}, we have
$$ \int_X (S_v - w(\mu_\omega)) \omega^m = 0,$$
and thus $c_{v,w} = 1$ and 
 \begin{eqnarray*}
 \Fut_{v,w} (X) = \int_X (S_v - w(\mu_\omega))\, h_X\,\omega^m.
 \end{eqnarray*}
 Using \eqref{soliton1} this can be rewritten as 
 \begin{equation*}
  \Fut_{v,w}(X) = \int_X (JX)(\log v(\mu_\omega) - f)\,v(\mu_\omega)\,\omega^m. \label{Lich6}
 \end{equation*}

Now, let $\varpi : \mathfrak X \to B$ be the Kuranishi family of a Fano manifold $X$. 
By Theorem \ref{deform1} and Theorem \ref{deform2}, we know the following.

(1) The K\"ahler form $\omega$ on $X_0 = X$ remains to be K\"ahler forms on $X_t$ for all $t\in B$.

(2) The Ricci form $\Ric(X_t,\omega)$
of $(X_t, \omega)$ is given by
\begin{eqnarray*}
 \Ric(X_t,\omega) 
 = \omega + \partial_t\barpartial_t (f_0 + \log\det(I - \varphi(t)\overline{\varphi(t)})).
 \end{eqnarray*}
 But since we assume $(X_0, \omega_0)$ with $\omega_0 = \omega$ is a $v$-soliton we have
 \begin{eqnarray*}
 f_0 = \log v(\mu_{\omega_0}).
 \end{eqnarray*}
 Hence we have
 \begin{equation} \label{Lich7}
  \Fut_{v,w}(t)(X) = - \int_X (JX)(\log \det(I - \varphi(t)\overline{\varphi(t)}))\,v(\mu_\omega)\,\omega^m.
 \end{equation}
But since any automorphism of $X_t$ preserves $\varphi(t)$ the derivative by $JX$ on the right hand side of \eqref{Lich7} vanishes.
Thus $\Fut_{v,w}(t)$ vanishes.
\begin{lemma}[\cite{futaki24PAMQ}, also \cite{CaoSunYauZhang2022}]
Suppose that (2) of the Main Theorem is satisfied ($\dim \Aut^T(X_t) = \dim\Aut^T(X_0)$). Then the identity component $\Aut^T_0(X)$ of $\Aut^T(X)$ 
acts on $H^1(X_0, T'X_0)\cong T'_0B$ trivially, and hence on $B$ trivially.
\end{lemma}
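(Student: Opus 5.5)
Assume (2), so $T\subset \Aut(X_t)$ and $\dim \Aut^T(X_t)=\dim\Aut^T(X_0)$ for all $t\in B$. The plan follows Cao--Sun--Yau--Zhang: use upper semicontinuity of automorphism groups in the Kuranishi family to show that the holomorphic vector fields commuting with $T$ on $X_0$ extend to $X_t$. Then linearize the resulting $\Aut^T_0(X_0)$-action on the base. Since $X_0$ has a weighted $v$-soliton, the weighted Lichnerowicz--Matsushima theorem gives that $G:=\Aut^T_0(X_0)$ is the complexification of a compact group $K=\Isom^T_{r}(X_0)_{id}$ (up to the $T$ factor). By the $K$-equivariance of the Kuranishi slice (the gauge $\barpartial^\ast_f\varphi=0$ is $K$-invariant because $f=\log v(\mu_\omega)$ is $K$-invariant), $K$ acts linearly on the space $\mathcal H^{0,1}_f(T'X_0)\cong H^1(X_0,T'X_0)\cong T'_0B$ of $\Delta_f$-harmonic forms, and $\varphi(t)$ is $K$-equivariant in $t$. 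The same then holds for $G$ infinitesimally, since the action of $\mathfrak h^T(X_0)$ on $H^1(X_0,T'X_0)$ is the complex-linear extension of the $K$-action.

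The key step is to identify the stabilizer. For $t\in B$, an element $\xi\in\mathfrak g=\mathrm{Lie}(G)$ lifts to a holomorphic vector field on $X_t$ commuting with $T$ exactly when $\xi$ annihilates $t$ under the infinitesimal action. More precisely, the obstruction to lifting is the bracket $[\xi,\varphi(t)]$ modulo $\barpartial$-exact terms, and to first order this is the linear action $\xi\cdot t$. One inequality is standard: by upper semicontinuity, $\mathfrak h^T(X_t)$ is isomorphic to a subalgebra of the stabilizer $\mathfrak g_t=\{\xi\in\mathfrak g \mid \xi\cdot t=0\}$. So the dimension hypothesis forces $\mathfrak g_t=\mathfrak g$ for all $t$ in an open neighbourhood of $0$. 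Since the action is linear, $\xi\cdot t=0$ for all $t$ in an open set implies $\xi$ acts by zero on $T'_0B$. Thus $G$ acts trivially on $H^1(X_0,T'X_0)$, and by the equivariance of the Kuranishi map it acts trivially on $B$.

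The main obstacle is to establish $\mathfrak h^T(X_t)\hookrightarrow\mathfrak g_t$ with the right normalization: that every holomorphic vector field on $X_t$ commuting with $T$ is the limit-deformation of an element of $\mathfrak g$ fixing $t$. I would first work in $\Delta_f$-harmonic gauge. Write $\mathfrak h^T(X_t)$ as the kernel of the operator $\barpartial_t$ on $T$-invariant sections of $T'X$ transported to $X_0$ via $\varphi(t)$. These kernels form an upper semicontinuous family of finite-dimensional spaces. Their limits at $t=0$ land in $\mathfrak g$, and the condition $\barpartial_t\xi_t=0$ differentiated along the ray $st$ gives $\xi_0\cdot t=0$. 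Reductivity (compactness of $K$) then lets me conclude that the equality of dimensions forces the full equality $\mathfrak g_t=\mathfrak g$, rather than only an isomorphism with some twisted subalgebra.
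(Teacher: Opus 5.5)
Your proposal is correct and follows essentially the argument the paper defers to (Cao--Sun--Yau--Zhang, adapted by Futaki). Constancy of $\dim\mathfrak h^T(X_t)$ makes every $\xi\in\mathfrak h^T(X_0)$ a limit of $T$-invariant holomorphic vector fields $\xi_s$ on $X_{st}$; equivalently, by Kodaira--Spencer, $\xi$ extends to a smooth family $\xi_s$. Pairing the $O(s)$ part of $\barpartial\xi_s=[\varphi(st),\xi_s]+O(s^2)$ with $\Delta_f$-harmonic forms shows that $[\varphi_1(t),\xi]$ is $\barpartial$-exact, i.e.\ $\xi$ acts trivially on $H^1(X_0,T'X_0)$, and $K$-equivariance of the harmonic-gauge Kuranishi family then gives triviality on $B$.

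In your last step only the dimension count $\dim\mathfrak g=\dim\mathfrak h^T(X_{st})\le\dim\mathfrak g_t\le\dim\mathfrak g$ is needed. So the appeal to reductivity there is superfluous, as are the stronger ``exactly when'' and ``isomorphic to a subalgebra'' claims. Reductivity matters only for relating the $\Aut^T_0(X_0)$-action on $B$ to the $K$-equivariance.
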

Hence, we can apply Proposition \ref{extension} to obtain weighted extremal metrics on $X_t$.
But since $\Fut_{v,w}(t) = 0$ on $X_t$ as we saw above, they are in fact weighted cscK, i.e. weighted solitons.
This is some more detail of the proof of (2) $\Longrightarrow$ (1) for Theorem \ref{Main Thm}.

Next we give some more detail of the proof that (1) implies (3). 
We first show the action of $G := \Isom_0^T(X_0,\omega)$ on $B$ is trivial. 
Suppose this is not the case.
Then by the same arguments as in \cite{CaoSunYauZhang2022}, there is a  non-product test configuration for
 $(X_t, K_{X_t}^{-k})$. 
 But this is impossible since $X_t$ has a $v$-soliton and K-polystable, the central fiber $X_0$ also has a
 $v$-soliton and the Donaldson-Futaki invariant is zero, see Theorem 1.17 and 1.21 in \cite{LiChi21}.

Thus the action of $G$ on $B$ is trivial. 
Then $G$ preserves both $\omega$ and $\varphi(t)$ of the Kuranishi family, 
and thus we have an inclusion
$G \subset \Isom_0(X_t,\omega)$. In particular $T \subset \Isom_0(X_t,\omega) \subset \Aut_0(X_t)$ 
and 
$G \subset \Isom_0^T(X_t,\omega)$. Hence
$\Aut_0^T(X_0) = G^\bfC \subset \Aut_0^T(X_t)$. 

But since $\dim H^0(X_t, T^\prime X_t)^T$ is upper semi-continuous we obtain
$G^\bfC = \Aut^T(X_t)$ for all $t \in B$. 
This proves that (1) implies (3). That (3) implies (2) is trivial. This completes the proof
of Main Theorem.

\end{document}